\documentclass[a4paper,11pt]{article}
\usepackage[english]{babel}
\usepackage[colorlinks,citecolor=green,linkcolor=red]{hyperref}
\usepackage{amsthm}
\usepackage{color}
\usepackage{mathrsfs}
\usepackage{amsmath}
\usepackage{mathtools}
\usepackage{amsfonts}
\usepackage{amssymb}
\usepackage{bm}
\usepackage{physics}
\usepackage{enumitem}
\usepackage{tikz-cd}
\usepackage{a4wide}
\usepackage{cleveref}
\usepackage{esint}
\usepackage{nicefrac}
\usepackage{yfonts}
\usepackage[toc]{multitoc}

\setlength{\columnseprule}{0.5pt}

\let\oldbibliography\thebibliography
\renewcommand{\thebibliography}[1]{
\oldbibliography{#1}
\setlength{\itemsep}{0pt}}

\numberwithin{equation}{section}

\newtheorem{thm}{Theorem}[section]
\newtheorem*{thm*}{Theorem}
\newtheorem{lem}[thm]{Lemma}
\newtheorem{prop}[thm]{Proposition}

\theoremstyle{definition}
\newtheorem{defn}[thm]{Definition}

\theoremstyle{remark}
\newtheorem{rem}[thm]{Remark}

\newcommand{\lip}{{\mathrm {lip}}}

\newcommand{\DIFF}{{\mathrm{D}}}

\newcommand{\heat}{{\mathsf {h}}}

\newcommand{\capa}{{\mathrm {Cap}}}

\newcommand{\mres}{\mathbin{\vrule height 1.6ex depth 0pt width 0.13ex\vrule height 0.13ex depth 0pt width 1.3ex}}

\newcommand{\dist}{{\mathsf{d}}}

\newcommand{\mass}{{\mathsf{m}}}

\newcommand{\XX}{{\mathsf{X}}}
\newcommand{\YY}{{\mathsf{Y}}}
\newcommand{\ZZ}{{\mathsf{Z}}}
\newcommand{\FF}{{\mathcal{F}}}

\newcommand{\defeq}{\mathrel{\mathop:}=}

\newcommand{\HH}{\mathcal{H}}
\newcommand{\LL}{\mathcal{L}}

\newcommand{\RR}{\mathbb{R}}
\newcommand{\NN}{\mathbb{N}}

\newcommand{\LIP}{{\mathrm {LIP}}}
\newcommand{\BV}{{\mathrm {BV}}}

\newcommand{\RCD}{{\mathrm {RCD}}}
\newcommand{\CD}{{\mathrm {CD}}}
\newcommand{\PI}{{\mathrm {PI}}}

\def\XXint#1#2#3{{\setbox0=\hbox{$#1{#2#3}{\int}$}
		\vcenter{\hbox{$#2#3$}}\kern-.5\wd0}}

\newcommand{\fr}{\penalty-20\null\hfill$\blacksquare$}

\newcommand{\nchi}{{\raise.3ex\hbox{\(\chi\)}}}

\title{Sobolev and BV functions on $\mathrm{RCD}$ spaces \\ via the short-time behaviour of the heat kernel}

\author{Camillo Brena\footnote{\href{mailto:camillo.brena@sns.it}{camillo.brena@sns.it}, Scuola Normale Superiore, Piazza dei Cavalieri, 7, 56126 Pisa, Italy.}
\and Enrico Pasqualetto\footnote{\href{mailto:enrico.e.pasqualetto@jyu.fi}{enrico.e.pasqualetto@jyu.fi}, Department of Mathematics and Statistics, P.O.\ Box 35 (MaD), FI-40014 University of Jyvaskyla, Finland.}
\and Andrea Pinamonti\footnote{\href{mailto:andrea.pinamonti@unitn.it}{andrea.pinamonti@unitn.it}, Dipartimento di Matematica, Universit\`{a} degli Studi di Trento, Via Sommarive 14, 38123 Povo (Trento), Italy.}}

\begin{document}

\maketitle

\begin{abstract}
In the setting of finite-dimensional $\mathrm{RCD}(K,N)$ spaces, we characterize the $p$-Sobolev spaces for $p\in(1,\infty)$ and the space of functions of bounded variation in terms of the short-time behaviour of the heat flow.
Moreover, we prove that Cheeger $p$-energies and total variations can be computed as limits of nonlocal functionals involving the heat kernel.
\end{abstract}

\noindent\textbf{MSC(2020).} 53C23, 49J52, 46E35, 26A45, 35K08.\\
\textbf{Keywords.} Sobolev function, function of bounded variation, RCD space, heat kernel.

\tableofcontents

\section{Introduction}
\subsection*{General overview}
The theory of metric measure spaces verifying synthetic lower Ricci curvature bounds has been the object of impressive developments
in the last few years. After the Curvature-Dimension condition (\(\CD\) for short) was introduced by Lott--Villani \cite{Lott-Villani09} and Sturm \cite{Sturm06I,Sturm06II}
independently, the subclass of the so-called \(\RCD\) spaces -- which select the Riemannian-type structures among the Finslerian ones --
was extensively studied starting from \cite{Ambrosio_2014,Gigli12,AmbrosioGigliMondinoRajala12}. Besides smooth Riemannian manifolds (possibly weighted and/or with convex boundary)
whose generalized Ricci tensor is bounded from below, the class of \(\RCD(K,N)\) spaces includes finite-dimensional Alexandrov spaces with sectional curvature bounded from below
\cite{Petrunin11}, as well as Ricci limit spaces \cite{Cheeger-Colding97I,Cheeger-Colding97II,Cheeger-Colding97III}, which arise as Gromov--Hausdorff limits of
Riemannian manifolds having constant dimension and Ricci curvature uniformly bounded from below. The interested reader is referred to the survey \cite{AmbICM} for an account
of this theory and a detailed bibliography.
\medskip

In this paper, we focus on the analysis of \(p\)-Sobolev functions with \(p\in(1,\infty)\), and of functions of bounded variation (BV for short), in the framework
of finite-dimensional \(\RCD\) spaces. Even though meaningful notions of Sobolev functions \cite{Cheeger00,Shanmugalingam00,AmbrosioGigli11} and of BV functions
\cite{MIRANDA2003,ADM2014} are available on arbitrary metric measure spaces, much more refined results were obtained in the setting of PI spaces (i.e.\ doubling spaces
supporting a weak form of Poincar\'{e} inequality). However, in the smaller class of \(\RCD(K,N)\) spaces an even more refined Sobolev and  BV calculus -- closely resembling
the one in the Euclidean or Riemannian setting -- is possible, as shown e.g.\ by
\cite{Gigli14,ambrosio2018rigidity,bru2019rectifiability,BG22,ABP22}. The aim of the present paper
is to characterize Sobolev and BV functions, as well as their Sobolev and BV norms, on \(\RCD(K,N)\) spaces, by means of the short-time behaviour of a nonlocal
functional involving the heat kernel. See Theorems \ref{main1} and \ref{main2} for the precise statements.
\medskip

In their seminal work \cite{BBM}, Bourgain--Brezis--Mironescu showed that if \(\Omega\subseteq\RR^n\) is a smooth bounded domain and
\(p\in(1,\infty)\), then the \(p\)-Sobolev seminorm of a function \(f\in L^p(\Omega)\) coincides (up to
a multiplicative factor, depending only on \(p\) and \(n\)) with the limit
\[
\lim_{i\to\infty}\bigg(\int_\Omega\int_\Omega\frac{|f(x)-f(y)|^p}{|x-y|^p}\rho_i(|x-y|)\,\dd x\,\dd y\bigg)^{1/p},
\]
where \((\rho_i)_{i\in\NN}\) are suitably chosen kernels of mollification. The result was then generalized to BV functions
by D\'{a}vila \cite{Dav} and Ambrosio independently. In the BV case, it holds that if \(\Omega\subseteq\RR^n\) is a bounded domain with Lipschitz
boundary, then the total variation \(|\DIFF f|(\Omega)\) of any function \(f\in L^1(\Omega)\) coincides (up to a multiplicative
factor, depending only on \(n\)) with
\[
\lim_{i\to\infty}\int_\Omega\int_\Omega\frac{|f(x)-f(y)|}{|x-y|}\rho_i(|x-y|)\,\dd x\,\dd y.
\]
Later on, several different generalizations of these results in Euclidean spaces were considered: e.g.\ by Ponce \cite{Pon},
Leoni--Spector \cite{LS1,LS2}, Nguyen \cite{Ngu}, Brezis--Nguyen \cite{BNgu,BNgu2,BNgu3,BNgu4}, Pinamonti--Vecchi--Squassina \cite{PVS1},
Nguyen--Pinamonti--Vecchi--Squassina \cite{NPVS}, Comi-Stefani \cite{CSte}, Brezis--Van Schaftingen--Yung \cite{BVY,BVY2,BVY3},
Garofalo--Tralli \cite{GarTral1, GarTral2}, Maalaoui-Pinamonti \cite{MalPin}.
\medskip

Brezis suggested in \cite[Remark 6]{Bre2} that it might be interesting to generalize the theory to more general metric measure spaces
\((\XX,\dist,\mass)\). In this direction, the first contribution was given by Di Marino--Squassina \cite{DiSqua}, who worked in the
setting of \(p\)-PI spaces and with a specific family of mollifiers. Later on Gorny \cite{Gor} and Han--Pinamonti \cite{HanPin} proved
the equality for $1<p<\infty$ for a suitable class of metric measure spaces and mollifiers. Recently, in \cite{LaPiZh22} the authors
studied a characterization of BV in metric measure spaces supporting a doubling measure and a Poincar\'e inequality for a large class of mollifiers.
In the same paper, the authors also provided a counterexample in the case $p = 1$, demonstrating that (unlike in Euclidean spaces) in metric measure
spaces the limit of the nonlocal functionals is only comparable, but not necessarily equal, to the total variation measure of the function $f$.
\medskip

In many cases of interest, a good choice of mollifiers is given by the (suitably rescaled) heat kernels. In the case of functions of bounded variation,
this has been done by several authors: by Miranda Jr.--Pallara--Paronetto--Preunkert on Euclidean spaces \cite{MPPP}
and on Riemannian manifolds \cite{MPPPbis}, by Carbonaro--Mauceri on Riemannian manifolds \cite{carbonaro_mauceri_2007},
by Bramanti--Miranda Jr.-Pallara on Carnot groups \cite{BMP}, by Marola--Miranda--Shanmugalingam on PI spaces \cite{MMS16}.
\subsection*{Statement of results}
Let us now state the main results of this paper. In an \(\RCD(K,N)\) space \((\XX,\dist,\mass)\), we denote by \((0,+\infty)\times\XX\times\XX\ni(t,x,y)\mapsto p_t(x,y)\) the heat kernel
and by \({\rm Ch}_p(f)=\int|\dd_p f|^p\,\dd\mass\) the Cheeger \(p\)-energy of a function \(f\), which has to be understood as $+\infty$ if $f\notin W^{1,p}(\XX)$. Concerning Sobolev spaces, our main result is the following:
\begin{thm}\label{main1}
Let \((\XX,\dist,\mass)\) be an \(\RCD(K,N)\) space, for some \(K\in\RR\) and \(N\in[1,\infty)\). Let $p\in(1,\infty)$ and $f\in L^p(\mass)$ be given. 
Then, denoting by \(\Gamma\) the Euler's gamma function, it holds
\begin{equation}\label{eq:claim_main1}
\exists\lim_{t\searrow 0}\frac{1}{t^{p/2}}\int\!\!\!\int p_t(x,y)|f(x)-f(y)|^p\,\dd\mass(x)\,\dd\mass(y)=\frac{2^p}{\sqrt{\pi}}\Gamma\bigg(\frac{p+1}{2}\bigg){\rm Ch}_p(f).
\end{equation}
\end{thm}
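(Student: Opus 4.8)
The strategy is to first establish the result for nice functions (Lipschitz with bounded support, or functions in a suitable test class) where one can compute directly, then pass to general $f\in L^p(\mass)$ by a density and lower-semicontinuity argument. The key heuristic is that for small $t$, the heat kernel $p_t(x,y)$ concentrates on the diagonal, and by the Gaussian asymptotics $p_t(x,y)\approx (4\pi t)^{-n/2}e^{-\dist(x,y)^2/4t}$ (valid in an appropriate integrated sense on $\RCD(K,N)$ spaces), the double integral localizes to $y$ near $x$. Writing $|f(x)-f(y)|\approx |\dd_p f|(x)\,\dist(x,y)$ along a geodesic and rescaling $\dist(x,y)=\sqrt{t}\,r$, the inner integral becomes, in the limit, a Gaussian-weighted integral of $r^p$ over the (blown-up) tangent cone, which is Euclidean $\RR^n$ $\mass$-a.e.\ by the rectifiability theory of $\RCD(K,N)$ spaces. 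The constant $\frac{2^p}{\sqrt\pi}\Gamma(\frac{p+1}{2})$ should come out as the dimension-independent Gaussian moment $\frac{1}{(4\pi)^{n/2}}\int_{\RR^n}|z_1|^p e^{-|z|^2/4}\,\dd z$ after normalizing — the $n$-dependence cancels because the heat kernel normalization and the Gaussian integral over the remaining $n-1$ coordinates conspire.

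\textbf{Step 1: the upper bound.}
For general $f\in L^p(\mass)$, I would show
\[
\limsup_{t\searrow0}\frac{1}{t^{p/2}}\int\!\!\!\int p_t(x,y)|f(x)-f(y)|^p\,\dd\mass(x)\,\dd\mass(y)\le \frac{2^p}{\sqrt\pi}\Gamma\Big(\frac{p+1}{2}\Big){\rm Ch}_p(f).
\]
This is the direction that, combined with a matching liminf, forces existence of the limit. The natural route is a pointwise/integrated heat-semigroup estimate: bound $|f(x)-f(y)|^p$ using the Sobolev structure and integrate against $p_t$, controlling error terms via the Bishop--Gromov volume bound and the Gaussian upper/lower estimates on $p_t$ available on $\RCD(K,N)$ spaces (Sturm, Jiang--Li--Zhang). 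One typically first proves it for $f\in\Lip_{\rm bs}(\XX)$ using the explicit bound $|f(x)-f(y)|\le ({\rm lip}\,f)(x)\dist(x,y)+o(\dist(x,y))$ plus a covering argument, then extends: if $f\in W^{1,p}$ take Lipschitz approximations $f_k\to f$ in $W^{1,p}$, and if $f\notin W^{1,p}$ there is nothing to prove since the right side is $+\infty$.

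\textbf{Step 2: the lower bound.}
Here I would show
\[
\liminf_{t\searrow0}\frac{1}{t^{p/2}}\int\!\!\!\int p_t(x,y)|f(x)-f(y)|^p\,\dd\mass(x)\,\dd\mass(y)\ge \frac{2^p}{\sqrt\pi}\Gamma\Big(\frac{p+1}{2}\Big){\rm Ch}_p(f),
\]
which in particular proves $f\in W^{1,p}$ whenever the left side is finite. The idea is to test against a potential: for $\varphi\in\Lip_{\rm bs}$, use that the difference quotient $\frac{f(x)-f(y)}{\dist(x,y)}$ weighted by $p_t$ and integrated against a vector field (or, at the level of the $p$-energy, paired with a gradient of $\varphi$ via a discrete integration by parts and Hölder) converges to $\int \dd_p f\cdot\nabla\varphi$; then optimize over $\varphi$ using the duality formula for ${\rm Ch}_p$. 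Alternatively, and perhaps more robustly, mollify $f$ by the heat flow itself: set $f_t=\heat_t f$, use $\Gamma$-calculus identities to relate the nonlocal functional to $\int|\nabla\heat_s f|^p$ for $s\sim t$, and let $t\to0$ using lower semicontinuity of ${\rm Ch}_p$ together with the known short-time convergence $\heat_t f\to f$ in $W^{1,p}$. The Gaussian lower bound on $p_t$ is what makes this quantitative.

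\textbf{Step 3: identifying the constant; the main obstacle.}
The hardest part is making the blow-up rigorous and extracting the \emph{exact} constant, i.e.\ showing that the localization error is genuinely $o(t^{p/2})$ uniformly enough, and that the limiting Gaussian integral over the tangent space yields precisely $\frac{2^p}{\sqrt\pi}\Gamma(\frac{p+1}{2})$ regardless of the essential dimension $n$. For this I would: (i) invoke the fine structure theory — $\RCD(K,N)$ spaces decompose $\mass$-a.e.\ into $n$-regular sets with Euclidean tangents and $\mass\mressmall$ there is $n$-Ahlfors regular — so that the rescaled measures $\frac{1}{t^{n/2}}\mass$ around a.e.\ $x$ converge to a multiple of Lebesgue measure on $\RR^n$; (ii) use the heat kernel asymptotics $t^{n/2}p_t(x,\exp_x(\sqrt t\, v))\to \frac{1}{(4\pi)^{n/2}}e^{-|v|^2/4}$ (Gaussian on the tangent) in the integrated sense; (iii) compute $\frac{1}{(4\pi)^{n/2}}\int_{\RR^n}|v_1|^p e^{-|v|^2/4}\,\dd v = \frac{1}{\sqrt{4\pi}}\int_\RR |s|^p e^{-s^2/4}\,\dd s = \frac{2^p}{\sqrt\pi}\Gamma\big(\frac{p+1}{2}\big)$, where the Gaussian integrals over the orthogonal $n-1$ directions cancel the $(4\pi)^{-(n-1)/2}$ factor exactly. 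The delicate technical point is uniform integrability / a dominated-convergence justification for the rescaled double integral near the non-smooth part of the space and near points where $|\dd_p f|$ is large; this is where the finite-dimensionality ($N<\infty$) and the Li--Yau / Bishop--Gromov bounds are essential, and where I expect the bulk of the work to lie.
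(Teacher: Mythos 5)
Your plan has the right shape — reduce to a nice class of functions by density, prove the statement there by a blow-up to the Euclidean tangent cone — and your Step~3 is essentially Lemma~\ref{Gorny2} of the paper. In particular, your observation that the constant comes out dimension-independent via
\[
\frac{1}{(4\pi)^{n/2}}\int_{\RR^n}|z_1|^p e^{-|z|^2/4}\,\dd z=\frac{1}{\sqrt{4\pi}}\int_\RR|s|^p e^{-s^2/4}\,\dd s=\frac{2^p}{\sqrt\pi}\,\Gamma\bigg(\frac{p+1}{2}\bigg)
\]
is exactly the computation in the paper, and your remarks about uniform integrability being the delicate technical point are on target (the paper handles this by splitting the rescaled integral over $B_R$ and $\XX\setminus B_R$ and controlling the tail through the Gaussian estimates). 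However, both of the passages that are meant to carry the reduction have problems.

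In Step~1, to pass from Lipschitz functions to general $f\in W^{1,p}(\XX)$ by density you need a bound that is \emph{uniform in $t$}, namely $t^{-p/2}\int\!\!\!\int p_t(x,y)|f(x)-f(y)|^p\,\dd\mass\,\dd\mass\leq C\,\|f\|_{W^{1,p}(\XX)}^p$ for all $t\in(0,1)$, so that the triangle inequality $|F_t(f)^{1/p}-F_t(f_k)^{1/p}|\leq F_t(f-f_k)^{1/p}\leq C\|f-f_k\|_{W^{1,p}(\XX)}$ allows the interchange of limits. Your proposed route — the pointwise inequality $|f(x)-f(y)|\leq\lip(f)(x)\,\dist(x,y)+o(\dist(x,y))$ plus a covering argument — does not give this: the $o(\dist(x,y))$ term is not uniform in $x$, and the inequality only makes sense for Lipschitz $f$, whereas the uniform estimate must hold for every $f\in W^{1,p}(\XX)$. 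The paper gets it via Haj\l asz upper gradients at a fixed scale (Proposition~\ref{prop:Hajlasz} and Lemma~\ref{Gorny1}), combined with the Gaussian heat-kernel estimates.

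Step~2 is the genuine gap. Neither of your two routes to a quantitative lower bound is viable as stated. The suggested ``pairing $\dd_pf\cdot\nabla\varphi$'' has no natural meaning: the minimal $p$-relaxed slope $|\dd_pf|$ is scalar, and for $p\neq 2$ there is no canonical vector-valued differential of a $W^{1,p}$ function to dualize against $\nabla\varphi$ via a discrete integration by parts. The $\Gamma$-calculus alternative also fails: the algebraic identity that converts $\int\!\!\!\int p_t|f(x)-f(y)|^p\,\dd\mass\,\dd\mass$ into integrated heat-flow quantities (as in $\int f(f-\heat_tf)\,\dd\mass$) is specific to $p=2$, and the short-time convergence $\heat_tf\to f$ in $W^{1,p}(\XX)$ is not available for general $p$ on $\RCD$ spaces. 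The paper in fact never proves a $\liminf\geq c\,\mathrm{Ch}_p(f)$ estimate: instead, using the Gaussian \emph{lower} bound on $p_t$ it compares the kernel functional to the Korevaar--Schoen $p$-energy and invokes the Gigli--Tyulenev criterion \cite[Corollary~3.10]{GT21} (Lemma~\ref{membership}~i)) to conclude $f\in W^{1,p}(\XX)$ whenever the $\liminf$ is finite; the equality in \eqref{eq:claim_main1} for $f\in W^{1,p}(\XX)$ then follows entirely from the Lipschitz blow-up plus the uniform bound of Lemma~\ref{Gorny1}, with no separate lower estimate needed. You should replace your Step~2 with this membership argument rather than try to build a quantitative lower bound from scratch.
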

In the BV case, our main result (calling \(|\DIFF f|\) the total variation measure, which has to be understood as identically equal to $+\infty$ if $f\notin\BV(\XX)$) is the following:
\begin{thm}\label{main2}
Let \((\XX,\dist,\mass)\) be an \(\RCD(K,N)\) space, for some \(K\in\RR\) and \(N\in[1,\infty)\). Let $f\in L^1(\mass)$ be given. 
Then it holds that
\begin{equation}\label{eq:claim_main3minus1}
\exists\lim_{t\searrow 0}\frac{1}{\sqrt t}\int\!\!\!\int p_t(x,y)|f(x)-f(y)|\,\dd\mass(x)\,\dd\mass(y)=\frac{2}{\sqrt{\pi}}|\DIFF f|(\XX).
\end{equation}

\end{thm}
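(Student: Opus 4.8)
The plan is to reduce the statement to the case of characteristic functions by a coarea argument, and then to compute the resulting single-set asymptotics by blowing up at the reduced boundary, where the $\RCD$ structure makes the space Euclidean and the heat kernel Gaussian. For a Borel set $E\subseteq\XX$ and $t>0$ put
\[
G_t(E)\defeq\frac{1}{\sqrt t}\iint p_t(x,y)\,|\chi_E(x)-\chi_E(y)|\,\dd\mass(x)\,\dd\mass(y).
\]
Conservativeness of the heat kernel ($\int p_t(x,\cdot)\,\dd\mass=1$, which holds since $N<\infty$) and self-adjointness of $\heat_t$ give $G_t(E)=\frac{1}{\sqrt t}\|\heat_t\chi_E-\chi_E\|_{\Lpu}=\frac{2}{\sqrt t}\int_{\XX\setminus E}\heat_t\chi_E\,\dd\mass$. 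The layer-cake identity $|f(x)-f(y)|=\int_\RR|\chi_{\{f>s\}}(x)-\chi_{\{f>s\}}(y)|\,\dd s$ together with Tonelli's theorem then yield, for every $t>0$,
\[
\frac{1}{\sqrt t}\iint p_t(x,y)\,|f(x)-f(y)|\,\dd\mass(x)\,\dd\mass(y)=\int_\RR G_t(\{f>s\})\,\dd s ,
\]
while the coarea formula for $\BV$ functions on metric measure spaces (\cite{MIRANDA2003,ADM2014}) gives $|\DIFF f|(\XX)=\int_\RR\per(\{f>s\})\,\dd s$. Thus the theorem reduces to the single-set statement
\begin{equation}\label{eq:set-asymp}
\lim_{t\searrow0}G_t(E)=\frac{2}{\sqrt\pi}\,\per(E)\ \text{ if }\per(E)<\infty,\qquad \liminf_{t\searrow0}G_t(E)\ge\frac{2}{\sqrt\pi}\,\per(E)\ \text{ always,}
\end{equation}
with the convention $\per(E)=+\infty$ when $E$ is not of finite perimeter.

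Granting \eqref{eq:set-asymp}, the assembly is routine. If $f\in\BV(\XX)$, I would combine the first limit in \eqref{eq:set-asymp} with the uniform bound $G_t(E)\le C(K,N)\,\per(E)$ for $t\in(0,1]$ — a consequence of the Gaussian heat kernel estimates, the doubling property and the $(1,1)$-Poincar\'e inequality, i.e.\ the bounds of Marola--Miranda--Shanmugalingam \cite{MMS16} — and then apply dominated convergence in $s$: the dominating function $s\mapsto C(K,N)\per(\{f>s\})$ lies in $\Lp^1(\RR)$ since its integral equals $C(K,N)|\DIFF f|(\XX)<\infty$, and one obtains $\lim_{t\searrow0}\tfrac{1}{\sqrt t}\iint p_t(x,y)|f(x)-f(y)|\,\dd\mass\,\dd\mass=\tfrac{2}{\sqrt\pi}|\DIFF f|(\XX)$. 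If instead $f\in\Lpu\setminus\BV(\XX)$, then $\int_\RR\per(\{f>s\})\,\dd s=+\infty$, and Fatou's lemma together with the second assertion of \eqref{eq:set-asymp} forces the $\liminf$ to be $+\infty$, so the limit still exists, equal to $+\infty=\tfrac{2}{\sqrt\pi}|\DIFF f|(\XX)$.

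The core is \eqref{eq:set-asymp}. Since $G_t(E)=\frac{1}{\sqrt t}\|\heat_t\chi_E-\chi_E\|_{\Lpu}$, the localized form of the uniform estimate shows that every sequence $t_j\searrow0$ admits a subsequence along which the measures $\mu_{t_j}\defeq\frac{1}{\sqrt{t_j}}|\heat_{t_j}\chi_E-\chi_E|\,\mass$ are tight and converge weakly-$*$ to some $\mu$ with $\mu\le C(K,N)|\DIFF\chi_E|$ and $\mu(\XX)=\lim_j G_{t_j}(E)$. It then suffices to prove $\frac{\dd\mu}{\dd|\DIFF\chi_E|}\equiv\frac{2}{\sqrt\pi}$, as this value is independent of the chosen subsequence. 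Here I would invoke the De~Giorgi-type theorem on $\RCD(K,N)$ spaces (\cite{ambrosio2018rigidity,bru2019rectifiability}): $|\DIFF\chi_E|$ is concentrated on the reduced boundary $\FF E$, and at $|\DIFF\chi_E|$-a.e.\ $x\in\FF E$ the rescaled pointed spaces $(\XX,\dist/r,\mass/\mass(B_r(x)),x)$ converge, in the pointed measured Gromov--Hausdorff topology as $r\searrow0$, to $(\RR^{n},\dist_{\mathrm{eucl}},\omega_n^{-1}\mathcal L^{n},0)$ (with $n$ the local essential dimension) in such a way that $E$ blows up to a halfspace; moreover heat kernels are stable under this convergence, so the rescaled kernels converge locally uniformly to the Euclidean Gaussian. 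A diagonal blow-up at such a point — radius $r\searrow0$, time $t=r^{2}\tau$ — then reduces the computation of $\frac{\dd\mu}{\dd|\DIFF\chi_E|}(x)$ to the Euclidean one, and the elementary identity $\frac{1}{\sqrt\tau}\|\heat^{\RR}_\tau\chi_{(0,\infty)}-\chi_{(0,\infty)}\|_{\Lp^1(\RR)}=\frac{2}{\sqrt\pi}$ (valid for every $\tau>0$, and tensorising to give the correct density on the limiting hyperplane) yields $\frac{\dd\mu}{\dd|\DIFF\chi_E|}\equiv\frac{2}{\sqrt\pi}$; hence $\mu=\frac{2}{\sqrt\pi}|\DIFF\chi_E|$ and $\lim G_t(E)=\frac{2}{\sqrt\pi}\per(E)$. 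The lower bound when $\per(E)=+\infty$ follows from the same blow-up (finiteness of the perimeter is never used there), or by exhausting $E$ with $E\cap B_R(x_0)$ and letting $R\to\infty$.

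The essential difficulty is pinning down the sharp constant $2/\sqrt\pi$: on a general $\PI$ space the heat-kernel functionals are only two-sidedly comparable to the perimeter, and for $p=1$ this cannot be improved in general (\cite{LaPiZh22}), so equality forces one to exploit the full Euclidean-tangent/rectifiability structure of $\RCD(K,N)$ spaces — the De~Giorgi theorem for finite-perimeter sets, the constancy of the essential dimension — together with the fine convergence of the heat kernel under rescaling; the two-sided Gaussian bounds alone do not suffice. A secondary, more technical, difficulty is the bookkeeping in the diagonal blow-up: covering $\FF E$ efficiently, controlling overlaps, interchanging the limits $r\searrow0$ and $\tau\searrow0$, and dominating the Gaussian tails uniformly in $t$ so as to rule out escape of mass — all of which rest on the uniform estimate $G_t(E)\le C(K,N)\per(E)$ and the Gaussian decay. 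In contrast to the $p\in(1,\infty)$ case of Theorem~\ref{main1}, the Sobolev space $W^{1,1}$ is not reflexive, which is precisely why the coarea reduction to sets and the structure theory of $\BV$ functions enter here.
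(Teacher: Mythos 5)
Your reduction to characteristic functions via coarea, the identity $G_t(E)=\tfrac{1}{\sqrt t}\|\heat_t\chi_E-\chi_E\|_{L^1(\mass)}=\tfrac{2}{\sqrt t}\int_{\XX\setminus E}\heat_t\chi_E\,\dd\mass$, and the dominated-convergence assembly all match the paper's outline. The substantial divergence is in the single-set asymptotics, and this is where I see a genuine gap. You propose to take a weak-$*$ subsequential limit $\mu$ of $\mu_t\defeq\tfrac{1}{\sqrt t}|\heat_t\chi_E-\chi_E|\mass$ and then compute $\tfrac{\dd\mu}{\dd|\DIFF\nchi_E|}$ by a ``diagonal'' blow-up $t=r^2\tau$. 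But the Radon--Nikod\'ym density is an \emph{iterated} limit: $\tfrac{\dd\mu}{\dd|\DIFF\nchi_E|}(x)=\lim_{r\searrow 0}\lim_{j\to\infty}\tfrac{\mu_{t_j}(B_r(x))}{|\DIFF\nchi_E|(B_r(x))}$, with $t_j\to 0$ \emph{first}, at a rate unlinked to $r$. The blow-up, by contrast, is meaningful only on the \emph{coupled} diagonal $t\sim r^2$: for fixed $\tau$ and $R$, pmGH stability of heat kernels gives $\lim_{r\searrow 0}\tfrac{\mu_{r^2\tau}(B_{rR}(x))}{|\DIFF\nchi_E|(B_{rR}(x))}=\tfrac{\mu_\tau^{\RR^n}(B_R(0))}{|\DIFF\nchi_H|(B_R(0))}$, which is $<\tfrac{2}{\sqrt\pi}$ for finite $R$ and only tends to $\tfrac{2}{\sqrt\pi}$ as $R\to\infty$ (the elementary identity you quote controls the integral over a full slab $Q_{n-1}\times\RR$, not over $B_R(0)$). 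Passing from this coupled-diagonal statement to the iterated limit requires quantitative blow-up estimates uniform over $t\ll r^2$, and pmGH convergence is purely qualitative. This is not bookkeeping: in the Euclidean and Carnot settings the interchange is made possible by translation invariance, and the paper explicitly flags the absence of a linear structure as the obstruction that forces a different route.

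The paper's workaround is worth noting because it sidesteps the iterated limit entirely. Writing the single-set quantity as $\tfrac{2}{\sqrt t}\int(\chi_E-\heat_t\chi_E)\chi_E\,\dd\mass$, it applies L'H\^opital's rule (after $t\mapsto t^2$) to reduce the computation to $\lim_{t\searrow 0}\sqrt 8\,t\int|\nabla\heat_{t^2}\chi_E|^2\,\dd\mass$; the rigidity of the $1$-Bakry--\'Emery inequality (\cite[Lemma 2.7]{bru2019rectifiability}) replaces one factor $|\nabla\heat_{t^2}\chi_E|$ by $e^{-Kt^2}\heat_{t^2}^*|\DIFF\nchi_E|$ asymptotically; self-adjointness turns the integral into $\int\heat_{t^2}|\nabla\heat_{t^2}\chi_E|\,\dd|\DIFF\nchi_E|$; and the integrand $t\heat_{t^2}|\nabla\heat_{t^2}\chi_E|(x)$ has a genuine pointwise limit $\tfrac{1}{\sqrt{8\pi}}$ at $|\DIFF\nchi_E|$-a.e.\ $x$, computed by a blow-up at the \emph{single} scale $t$, with dominated convergence from the $L^\infty\to\Lip$ regularization. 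No double limit in $(r,t)$ ever arises. If you want to salvage the weak-$*$ strategy, you would have to supply the missing uniformity in the blow-up, or, more realistically, abandon the density computation in favour of the L'H\^opital--Bakry--\'Emery reduction. The remainder of your argument (MMS16 for the membership implication, coarea, layer-cake, dominated convergence in $s$) is essentially the paper's and is sound.
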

We will also obtain the following result; compare with \cite[Theorem 4.3]{MPPP}.
\begin{thm}\label{thm:main3}
Let \((\XX,\dist,\mass)\) be an \(\RCD(K,N)\) space, for some \(K\in\RR\) and \(N\in[1,\infty)\).
Let $f,g\in\BV(\XX)$ be such that $g\in L^\infty(\mass)$. Then
\begin{equation}\label{eq:claim_main3}
	\lim_{t\searrow 0}\frac{1}{\sqrt t}\int (f-\heat_t f) g\,\dd\mass=\frac{1}{\sqrt\pi}\frac{\omega_{n-1}}{\omega_n}\int_{S_f\cap S_g} (f^\vee-f^\wedge)(g^\vee-g^\wedge)(\nu_f\cdot\nu_g)\,\dd\HH^h.
\end{equation}
\end{thm}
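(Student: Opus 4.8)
The plan is to recast the left-hand side of \eqref{eq:claim_main3} as a symmetric bilinear form, reduce it via the coarea formula to the case of characteristic functions of sets of finite perimeter, settle that case by a blow-up at the jump set, and reassemble using the fine structure of $\BVv$ functions on $\RCD(K,N)$ spaces. Concretely, by stochastic completeness, $\int p_t(x,y)\,\dd\mass(y)=1$, and by the symmetry of the heat kernel, for $f\in\BVv$ and $g\in\BVv\cap\Lpi$ one has
\begin{equation*}
\frac{1}{\sqrt t}\int(f-\heat_t f)g\,\dd\mass=\frac{1}{2\sqrt t}\int\!\!\!\int p_t(x,y)\big(f(x)-f(y)\big)\big(g(x)-g(y)\big)\,\dd\mass(x)\,\dd\mass(y)=:\frac{1}{2\sqrt t}\,Q_t(f,g),
\end{equation*}
so that $Q_t$ is symmetric and bilinear, $Q_t(h,h)\ge 0$, and Cauchy--Schwarz is available for $Q_t$. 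Heuristically, on the absolutely continuous part of $\DIFF f,\DIFF g$ the integrand behaves like $|\nabla f|\,|\nabla g|\,\dist(x,y)^2$ and on the Cantor part like $o(\dist(x,y))$, so these parts contribute only $O(t)=o(\sqrt t)$ to $Q_t(f,g)$: this is the mechanism by which the limit becomes an integral over $S_f\cap S_g$ alone, in sharp contrast with the linear functional of \cref{main2}, and the whole proof is about making this quantitative.

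By a truncation argument one may assume $f\in\Lpi$ as well: writing $\int(f-\heat_t f)g\,\dd\mass=\int f(g-\heat_t g)\,\dd\mass$ by self-adjointness, using $\|g-\heat_t g\|_{\Lpu}=O(\sqrt t)$ (a consequence of \cref{main2}), and controlling the error $t^{-1/2}\int(f-f_M)(g-\heat_t g)\,\dd\mass$, $f_M\defeq(-M)\vee f\wedge M$, via the concentration of $g-\heat_t g$ near $S_g$, one checks it is negligible as $M\to\infty$ uniformly for small $t$, while the right-hand side of \eqref{eq:claim_main3} passes to the limit in $M$ by dominated convergence. Now, for $f,g\in\BVv\cap\Lpi$, the layer-cake identities $f(x)-f(y)=\int_\RR(\mathbf{1}_{\{f>s\}}(x)-\mathbf{1}_{\{f>s\}}(y))\,\dd s$ (and its analogue for $g$) combined with Fubini yield $Q_t(f,g)=\int_\RR\!\int_\RR Q_t(\mathbf{1}_{\{f>s\}},\mathbf{1}_{\{g>r\}})\,\dd s\,\dd r$; Cauchy--Schwarz for $Q_t$, a uniform upper bound $\sup_t t^{-1/2}Q_t(\mathbf{1}_E,\mathbf{1}_E)\le C\,\per(E,\XX)$ of the type underlying \cref{main2}, the coarea inequality $\int_\RR\per(\{f>s\},\XX)\,\dd s=|\DIFF f|(\XX)<\infty$, and the boundedness of $f,g$ then provide a dominating function integrable in $(s,r)$. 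By dominated convergence it is therefore enough to prove
\begin{equation*}
\lim_{t\searrow0}\frac{1}{\sqrt t}\int\!\!\!\int p_t(x,y)\big(\mathbf{1}_E(x)-\mathbf{1}_E(y)\big)\big(\mathbf{1}_F(x)-\mathbf{1}_F(y)\big)\,\dd\mass(x)\,\dd\mass(y)=\frac{2}{\sqrt\pi}\frac{\omega_{n-1}}{\omega_n}\int_{\partial^\ast E\cap\partial^\ast F}\nu_E\cdot\nu_F\,\dd\HH^h .
\end{equation*}
Granting this, the fine-structure identity -- for $\HH^h$-a.e.\ $x$ one has $x\in\partial^\ast\{f>s\}$ iff $s\in(f^\wedge(x),f^\vee(x))$, with $\nu_{\{f>s\}}(x)=\nu_f(x)$ independently of $s$, and analogously for $g$ -- integrates the displayed right-hand side over $(s,r)$ back exactly to $\frac{1}{\sqrt\pi}\frac{\omega_{n-1}}{\omega_n}\int_{S_f\cap S_g}(f^\vee-f^\wedge)(g^\vee-g^\wedge)(\nu_f\cdot\nu_g)\,\dd\HH^h$, i.e.\ to \eqref{eq:claim_main3}. (Alternatively one may polarize $Q_t(f,g)=\tfrac14(Q_t(f+g,f+g)-Q_t(f-g,f-g))$ and reduce to the diagonal statement $t^{-1/2}Q_t(h,h)\to\tfrac{2}{\sqrt\pi}\tfrac{\omega_{n-1}}{\omega_n}\int_{S_h}(h^\vee-h^\wedge)^2\,\dd\HH^h$, with $\nu_f\cdot\nu_g$ arising from the jumps of $f\pm g$ on the portions of $S_f\cap S_g$ where $\nu_f=\pm\nu_g$.)

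The characteristic-function case is the core of the argument, and I would prove it by localization and blow-up. Away from $\partial^\ast E\cap\partial^\ast F$ the contribution is negligible: near an $\HH^h$-generic point of $\partial^\ast E\setminus\partial^\ast F$ the function $\mathbf{1}_F$ is $\mass$-a.e.\ constant, so the integrand vanishes there, and a covering argument makes this quantitative. Near an $\HH^h$-generic point $x_0\in\partial^\ast E\cap\partial^\ast F$ one invokes the structure theory of $\RCD(K,N)$ spaces: the rescalings $(\XX,t^{-1/2}\dist,c_t\mass,x_0)$ converge, as $t\searrow0$, to Euclidean $\RR^n$ ($n$ the essential dimension) with a renormalized Lebesgue measure, the rescaled heat kernel converges to the Gaussian kernel, and both $E$ and $F$ blow up to a common half-space $\{z:\ z\cdot\nu\ge0\}$ with $\nu_E(x_0)=\pm\nu=\pm\nu_F(x_0)$. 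The limiting quantity is then an explicit Gaussian integral on $\RR^n$: the normal-direction computation $\frac{1}{\sqrt t}\cdot 2\int_0^\infty\!\!\int_{-\infty}^0(4\pi t)^{-1/2}e^{-(a-b)^2/4t}\,\dd b\,\dd a=\frac{2}{\sqrt t}\int_0^\infty\Phi(-a/\sqrt{2t})\,\dd a\to\tfrac{2}{\sqrt\pi}$ (with $\Phi$ the standard normal distribution function), times the transverse normalization of the tangent measure carrying the factor $\omega_{n-1}/\omega_n$, gives the density $\tfrac{2}{\sqrt\pi}\tfrac{\omega_{n-1}}{\omega_n}(\nu_E\cdot\nu_F)$; integrating over $\partial^\ast E\cap\partial^\ast F$ against $\HH^h$ yields the claim.

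The main obstacle is precisely this last step: upgrading the pointwise blow-up limit to a limit of integrals uniformly along the jump set. What is needed is a quantitative equi-integrability for $t^{-1/2}p_t(x,y)$ over the pairs $(x,y)$ straddling $\partial^\ast E$ (so that the mass genuinely concentrates on $\partial^\ast E\cap\partial^\ast F$, and at the predicted rate), together with density and covering estimates for essential boundaries on $\RCD(K,N)$ spaces -- exactly the analytic input already developed to prove \cref{main2}, which one expects to recycle here. The truncation reduction to bounded $f$ is a second, milder technical point: its error term must be handled through the concentration of $g-\heat_t g$ near $S_g$, rather than by naive Hölder estimates, since $f-f_M$ is unbounded.
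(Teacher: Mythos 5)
Your outer scaffolding largely matches the paper's: symmetrize $\int(f-\heat_t f)g\,\dd\mass$ (or keep the asymmetric pairing), reduce via the layer--cake/coarea decomposition to characteristic functions of level sets, pass the limit inside by dominated convergence, and reassemble via the fine structure of reduced boundaries on $\RCD(K,N)$ spaces ($x\in\FF\{f>s\}$ iff $s\in(f^\wedge(x),f^\vee(x))$, constancy of $\nu_{\{f>s\}}=\nu_f$, and $|\DIFF\nchi_E|=\frac{\omega_{n-1}}{\omega_n}\HH^h\mres\FF E$). Your truncation to bounded $f$ is a cost of the symmetric Cauchy--Schwarz domination (you need $\int_\RR\per(\{f>s\})^{1/2}\,\dd s<\infty$), and it is indeed closable in the way you sketch: write $\int(f-f_M)(g-\heat_t g)\,\dd\mass=\int g\,(h_M-\heat_t h_M)\,\dd\mass$ with $h_M=f-f_M$, bound by $\|g\|_{L^\infty}\|h_M-\heat_t h_M\|_{L^1}\lesssim\sqrt t\,|\DIFF h_M|(\XX)$, and let $M\to\infty$ using $|\DIFF h_M|(\XX)=\int_{|s|>M}\per(\{f>s\})\,\dd s\to 0$. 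The paper avoids truncating $f$ altogether because it works with the asymmetric integrand $p_t(x,y)\,\nchi_{F_\tau}(x)(\nchi_{E_\sigma}(x)-\nchi_{E_\sigma}(y))$, whose absolute value is dominated uniformly in $\tau$ by $4e^{K^-(t/2-1)}|\DIFF\nchi_{E_\sigma}|(\XX)$; only $g\in L^\infty$ is needed.

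The genuine gap is exactly the one you flag. You reduce everything to the characteristic-function identity
\[
\lim_{t\searrow0}\frac{1}{\sqrt t}\int\!\!\!\int p_t(x,y)\big(\nchi_E(x)-\nchi_E(y)\big)\big(\nchi_F(x)-\nchi_F(y)\big)\,\dd\mass(x)\,\dd\mass(y)
=\frac{2}{\sqrt\pi}\,\frac{\omega_{n-1}}{\omega_n}\int_{\FF E\cap\FF F}\nu_E\cdot\nu_F\,\dd\HH^h,
\]
and propose to prove it by a direct blow-up of the double integral along $\FF E\cap\FF F$, but you yourself call the upgrade from a pointwise blow-up to an integrated limit ``the main obstacle'' and only say you expect to recycle input from Theorem~\ref{main2}. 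That is not a proof: a double integral against $\mass\otimes\mass$ does not concentrate on the codimension-one set $\FF E\cap\FF F$, and the required equi-integrability of $t^{-1/2}p_t(x,y)$ on pairs straddling two essential boundaries is precisely what is hard to quantify. The paper closes this step by a different mechanism: it first applies L'H\^opital's rule to rewrite the quantity as $\lim_{t\searrow0}\sqrt 8\,t\int\nabla\heat_{t^2}\nchi_E\cdot\nabla\heat_{t^2}\nchi_F\,\dd\mass$; for $E=F$ it then invokes the asymptotic $1$-Bakry--\'Emery identity $|\nabla\heat_{t^2}\nchi_E|\approx e^{-Kt^2}\heat_{t^2}^*|\DIFF\nchi_E|$ in $L^1$ to reduce to the single integral $t\int\heat_{t^2}|\nabla\heat_{t^2}\nchi_E|\,\dd|\DIFF\nchi_E|$, which lives on the perimeter measure and has a uniformly bounded integrand by the $L^\infty$-to-Lipschitz estimate, so a pointwise blow-up at $|\DIFF\nchi_E|$-a.e.\ point (Lemma~\ref{lem:integr_limit}) plus dominated convergence finishes. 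The case $E\neq F$ is then handled by an algebraic polarization over $E\cap F$, $E\setminus F$, $F\setminus E$, combined with the fine-structure identities for reduced boundaries of Boolean combinations. Your one-dimensional Gaussian computation yielding $\tfrac{2}{\sqrt\pi}$ in the normal direction is correct, but the proposal as written leaves the characteristic-function core — where essentially all of the analytic work of the theorem is concentrated — unproved.
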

In the above formula, we denote by \(\omega_n\) the Lebesgue measure of the unit ball in \(\RR^n\),
by \(\{\heat_t\}_{t>0}\) the heat flow semigroup, by \(\nu_f\) the polar vector field of \(f\)
(see Theorem \ref{Gauss-Green}), and by \(\HH^h\) the codimension-one Hausdorff measure in \(\XX\)
(see \eqref{eq:cod_1_Hausd}). Moreover, the \emph{approximate lower limit} \(f^\wedge\) and the
\emph{approximate upper limit} \(f^\vee\) are the Borel functions on \(\XX\) given by
\[\begin{split}
f^\wedge(x)&\coloneqq\sup\bigg\{t\in[-\infty,+\infty]\;\bigg|\;
\lim_{r\searrow 0}\frac{\mass(B_r(x)\cap\{f<t\})}{\mass(B_r(x))}=0\bigg\},\\
f^\vee(x)&\coloneqq\inf\bigg\{t\in[-\infty,+\infty]\;\bigg|\;
\lim_{r\searrow 0}\frac{\mass(B_r(x)\cap\{f>t\})}{\mass(B_r(x))}=0\bigg\},
\end{split}\]
respectively, while the Borel set \(S_f\subseteq\XX\) is defined as \(S_f\coloneqq\{x\in\XX\,:\,f^\wedge(x)<f^\vee(x)\}\).
\subsection*{Strategy of the proof}
Without entering into details, we sketch the lines of reasoning that will lead to the proof of our main results Theorem \ref{main1} and Theorem \ref{main2}. The first step is to show that if the $\liminf_{t\searrow 0}$ of the quantity appearing at the left-hand side of \eqref{eq:claim_main1} (resp.\ \eqref{eq:claim_main3minus1}) is finite, then $f\in W^{1,p}(\XX)$ (resp.\ $f\in\BV(\XX)$). This is an immediate consequence of results already present in the literature (\cite{GT21,MMS16}), combined with the Gaussian lower bound for the heat kernel \eqref{eq:heat_kernel_ptwse_est}. Then the bulk of the proof is to show that if $f\in W^{1,p}(\XX)$ (resp.\ $f\in\BV(\XX)$), then the limit of the quantity appearing at the left-hand side of \eqref{eq:claim_main1} (resp.\ \eqref{eq:claim_main3minus1}) equals the right-hand side of \eqref{eq:claim_main1} (resp.\ \eqref{eq:claim_main3minus1}). We briefly discuss the two cases separately, as for technical details the proofs are very different, although both can be seen, in principle, as the reduction of the claim to a particular class of functions followed by the proof of the claim for this particular class of functions via a blow-up argument.

For what concerns Sobolev functions, we borrow the argument of \cite{Gor}. 
Namely, we first exploit the density in norm of Lipschitz functions in the Sobolev space and a continuity argument to prove that it is enough to show the claim for this kind of functions (Lemma \ref{Gorny1}). Then dominated convergence will allow us to exploit the blow-up Lemma \ref{Gorny2} and thus conclude easily. 
We also stress that the blow-up argument builds upon the theory developed in \cite{Cheeger00}, as well as the convergence of the heat flow under pmGH convergence of $\RCD$ spaces. We remark that the latter fact is linked to the Mosco convergence of the Cheeger energies and relies on the curvature bound \cite{GMS15}. Due to a different choice of the mollifiers involved with respect to \cite{Gor}, we have to use with care the Gaussian estimates for the heat kernel.
 
In the case of functions of bounded variation, we do not have density of Lipschitz functions with respect to the strong norm, hence we have to use another class of functions. We borrow the argument from  \cite{BMP,MPPP} and we argue using coarea formula: hence the claim is reduced to the claim for characteristic functions of sets of finite perimeter and finite measure. In this reduction procedure the regularizing property of the heat flow (thus, the curvature bound) heavily enters into play. Then the claim for this class of functions is proved (Theorem \ref{cdsnjca}) by algebraic manipulations and  Lemma \ref{maincomp}, which in turn builds upon the blow-up argument of Lemma \ref{lem:integr_limit}. Again, the blow-up argument relies on the convergence of the heat flow, which, as before, relies on the curvature bound. In this part, the main difficulty we have to face is the lack of linear structure of the space (the linear structure was present in \cite{BMP,MPPP}) which forces us to exploit a different procedure.

\section{Preliminaries}
\subsection{Sobolev calculus}
By a \emph{metric measure space} we mean a triplet \((\XX,\dist,\mass)\), where:
\[\begin{split}
(\XX,\dist),&\quad\text{ is a complete and separable metric space,}\\
\mass\geq 0,&\quad\text{ is a boundedly-finite Borel measure on }\XX.
\end{split}\]
Given any \(p\in(1,\infty)\), the \emph{Cheeger \(p\)-energy} \({\rm Ch}_p(f)\) of a function \(f\in L^p(\mass)\) is defined as
\[
{\rm Ch}_p(f)\coloneqq\inf_{(f_n)}\liminf_{n\to\infty}\int\lip(f_n)^p\,\dd\mass,
\]
where the infimum is taken among all sequences \((f_n)_n\) of boundedly-supported Lipschitz functions \(f_n\colon\XX\to\RR\)
such that \(f_n\to f\) in \(L^p(\mass)\). Here, the \emph{slope} \(\lip(f_n)\colon\XX\to[0,+\infty)\) is
\[
\lip(f_n)(x)\coloneqq\limsup_{y\to x}\frac{|f_n(x)-f_n(y)|}{\dist(x,y)},\quad\text{ for every accumulation point }x\in\XX,
\]
and \(\lip(f_n)(x)\coloneqq 0\) for every isolated point \(x\in\XX\). The \emph{Sobolev space} \(W^{1,p}(\XX)\) is defined as
\[
W^{1,p}(\XX)\coloneqq\big\{f\in L^p(\mass)\;\big|\;{\rm Ch}_p(f)<+\infty\big\}.
\]
Given any \(f\in W^{1,p}(\XX)\), there exists a unique function \(|\dd_p f|\in L^p(\mass)\), called the \emph{minimal \(p\)-relaxed slope}
of \(f\), such that the Cheeger \(p\)-energy of \(f\) can be represented as
\[
{\rm Ch}_p(f)=\int|\dd_p f|^p\,\dd\mass.
\]
See \cite{AmbrosioGigliSavare11,AmbrosioGigliSAvare11-3} after \cite{Cheeger00} for the above notions and results.
\medskip

We will focus our attention on the class of \emph{infinitesimally Hilbertian} metric measure spaces \cite{Gigli12}. These are
metric measure spaces whose Cheeger \(2\)-energy \({\rm Ch}\coloneqq{\rm Ch}_2\colon L^2(\mass)\to[0,+\infty]\) is quadratic.
Under this assumption, for any \(f\in W^{1,2}(\XX)\) we will write \(|\nabla f|\) instead of \(|\dd_2 f|\). Moreover,
every Sobolev function \(f\in W^{1,2}(\XX)\) admits a quasi-continuous representative, which is uniquely determined
up to \(\capa\)-a.e.\ equality. Here, \(\capa\) stands for the \emph{Sobolev \(2\)-capacity} on \((\XX,\dist,\mass)\),
which is the outer measure on \(\XX\) defined as
\[
\capa(E)\coloneqq\inf_f\int|f|^2+|\nabla f|^2\,\dd\mass,\quad\text{ for every }E\subseteq\XX,
\]
the infimum being taken among all \(f\in W^{1,2}(\XX)\) satisfying \(f\geq 1\) \(\mass\)-a.e.\ on some open neighbourhood of \(E\).
A function \(\bar f\colon\XX\to\RR\) is \emph{quasi-continuous} if for any \(\varepsilon>0\) there exists a set \(E_\varepsilon\subseteq\XX\)
with \(\capa(E_\varepsilon)<\varepsilon\) such that \(f|_{\XX\setminus E_\varepsilon}\) is continuous.
See e.g.\ \cite{debin2019quasicontinuous}. Recall that \(\mass\ll\capa\),
meaning that \(\mass(N)=0\) whenever \(N\subseteq\XX\) is a Borel set such that \(\capa(N)=0\).
\medskip

On infinitesimally Hilbertian spaces \((\XX,\dist,\mass)\), the carr\'{e} du champ operator, given by
\[
\nabla f\cdot\nabla g\coloneqq\frac{|\nabla(f+g)|^2-|\nabla f|^2-|\nabla g|^2}{2}\in L^1(\mass),\quad\text{ for every }f,g\in W^{1,2}(\XX),
\]
is bilinear. A function \(f\in W^{1,2}(\XX)\) has a \emph{Laplacian} if there exists \(\Delta f\in L^2(\mass)\) such that
\begin{equation}\label{eq:def_Laplacian}
\int\nabla f\cdot\nabla g\,\dd\mass=-\int g\Delta f\,\dd\mass,\quad\text{ for every }g\in W^{1,2}(\XX).
\end{equation}
Given that \(W^{1,2}(\XX)\) is dense in \(L^2(\mass)\), we have that \eqref{eq:def_Laplacian} uniquely determines \(\Delta f\).
We denote by \(D(\Delta)\) the space of all functions in \(W^{1,2}(\XX)\) having a Laplacian. It holds that \(D(\Delta)\)
is a vector subspace of \(W^{1,2}(\XX)\) and that \(\Delta\colon D(\Delta)\to L^2(\mass)\) is a linear operator. The \emph{heat flow}
\((\heat_t)_{t>0}\), which is defined as the gradient flow in the Hilbert space \(L^2(\mass)\) of the Cheeger energy \(\rm Ch\)
(whose existence and uniqueness are guaranteed by the Komura--Brezis theory), can be characterized as follows: for any \(f\in L^2(\mass)\),
the curve \((0,+\infty)\ni t\mapsto\heat_t f\in L^2(\mass)\) is locally absolutely continuous, satisfies \(\lim_{t\searrow 0}\heat_t f=f\)
with respect to the \(L^2(\mass)\)-topology, and
\[
\heat_t f\in D(\Delta),\quad\frac{\dd}{\dd t}\heat_t f=\Delta\heat_t f,\quad\text{ for every }t>0.
\]
In the following statement, we collect some useful properties of the heat flow:
\begin{prop}
Let \((\XX,\dist,\mass)\) be an infinitesimally Hilbertian metric measure space. Then:
\begin{itemize}
\item[\(\rm i)\)] \textsc{Weak maximum principle.} Given any \(f\in L^2(\mass)\cap L^\infty(\mass)\) and \(t>0\), it holds that
\begin{equation}\label{eq:maximum_principle}
\heat_t f\leq\|f\|_{L^\infty(\mass)},\quad\text{ in the }\capa\text{-a.e.\ sense,}
\end{equation}
where we keep the same notation to denote some quasi-continuous representative of \(\heat_t f\).
\item[\(\rm ii)\)] \textsc{\(\Delta\) and \(\heat_t\) commute.} Given any \(f\in D(\Delta)\) and \(t>0\), it holds that
\begin{equation}\label{eq:Delta_h_commute}
\heat_t\Delta f=\Delta\heat_t f,\quad\text{ in the }\mass\text{-a.e.\ sense.}
\end{equation}
\item[\(\rm iii)\)] \textsc{\(\heat_t\) is self-adjoint.} Given any \(f,g\in L^2(\mass)\) and \(t>0\), it holds that
\begin{equation}\label{eq:h_self-adj}
\int g\,\heat_t f\,\dd\mass=\int f\,\heat_t g\,\dd\mass.
\end{equation}
\end{itemize}
\end{prop}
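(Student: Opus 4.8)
The three statements are all classical facts about the gradient flow of a quadratic, lower semicontinuous functional on a Hilbert space, and the plan is to prove them by exploiting the variational characterization of $\heat_t$ and the spectral/semigroup structure of $\Delta$, together with an approximation argument for (i).

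For (iii), the self-adjointness, I would argue as follows. Fix $f,g\in L^2(\mass)$ and consider the function $t\mapsto\int g\,\heat_t f\,\dd\mass-\int f\,\heat_t g\,\dd\mass$ on $(0,+\infty)$. By the local absolute continuity of the heat flow curves and the identity $\frac{\dd}{\dd t}\heat_t f=\Delta\heat_t f$, this function is locally absolutely continuous with derivative
\[
\int g\,\Delta\heat_t f\,\dd\mass-\int f\,\Delta\heat_t g\,\dd\mass
=-\int\nabla g\cdot\nabla\heat_t f\,\dd\mass+\int\nabla f\cdot\nabla\heat_t g\,\dd\mass=0,
\]
where I used the definition \eqref{eq:def_Laplacian} of the Laplacian (legitimate since $\heat_s(\cdot)\in W^{1,2}(\XX)$ for $s>0$) and the symmetry of the carré du champ. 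Hence the function is constant; since $\heat_t f\to f$ and $\heat_t g\to g$ in $L^2(\mass)$ as $t\searrow 0$, the constant is $0$, which is exactly \eqref{eq:h_self-adj}.

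For (ii), the commutation of $\Delta$ and $\heat_t$, the cleanest route is again via the defining ODE: if $f\in D(\Delta)$, one checks that $t\mapsto\Delta\heat_t f$ and $t\mapsto\heat_t\Delta f$ are both locally absolutely continuous $L^2(\mass)$-valued curves solving the same Cauchy problem $\dot u=\Delta u$, $\lim_{t\searrow 0}u=\Delta f$. For the first curve one uses that $\heat_t f\in D(\Delta^2)$ and that differentiating $\frac{\dd}{\dd t}\heat_t f=\Delta\heat_t f$ gives $\frac{\dd}{\dd t}\Delta\heat_t f=\Delta(\Delta\heat_t f)$; for the second, apply the heat flow equation directly to $\Delta f\in L^2(\mass)$. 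By uniqueness in the Komura--Brezis theory (equivalently, by a standard Grönwall/energy estimate: the $L^2$-norm of the difference has nonpositive derivative and vanishes at $t=0$), the two curves coincide for every $t>0$, giving \eqref{eq:Delta_h_commute}.

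For (i), the weak maximum principle, I would combine the variational characterization of $\heat_t$ with a truncation (obstacle) argument. Given $f\in L^2(\mass)\cap L^\infty(\mass)$, set $M\coloneqq\|f\|_{L^\infty(\mass)}$ and, for fixed $t>0$, consider the minimizer $u=\heat_t f$ of the strictly convex functional $v\mapsto\frac12\|v-f\|_{L^2}^2+t\,{\rm Ch}(v)$ (a discrete-time/resolvent formulation; alternatively argue directly on the gradient flow). Comparing $u$ with its truncation $u\wedge M\in W^{1,2}(\XX)$, one has ${\rm Ch}(u\wedge M)\le{\rm Ch}(u)$ (truncations do not increase the Cheeger energy) and $\|u\wedge M-f\|_{L^2}\le\|u-f\|_{L^2}$ pointwise a.e.\ since $f\le M$; by strict convexity the minimizer satisfies $u=u\wedge M$, i.e.\ $\heat_t f\le M$ $\mass$-a.e.\ The passage from the $\mass$-a.e.\ bound to the $\capa$-a.e.\ bound for the quasi-continuous representative follows from the fact that $\heat_t f\in W^{1,2}(\XX)$ and the standard property that an inequality between quasi-continuous functions that holds $\mass$-a.e.\ holds $\capa$-a.e.\ (using $\mass\ll\capa$ and quasi-continuity, together with the fact that the truncation $M$ is itself quasi-continuous).

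\textbf{Main obstacle.} The statements (ii) and (iii) are routine once the ODE characterization of $\heat_t$ is in hand; the only place requiring care is (i), specifically the upgrade from the $\mass$-a.e.\ inequality to the $\capa$-a.e.\ one for the quasi-continuous representative. This is where one must invoke the quasi-continuity theory (that Sobolev functions have $\capa$-a.e.\ unique quasi-continuous representatives and that $\mass$-a.e.\ inequalities between them propagate to $\capa$-a.e.\ inequalities), as developed e.g.\ in \cite{debin2019quasicontinuous}. The other subtlety, easily handled, is making sure all the curves appearing in the differentiation arguments are genuinely locally absolutely continuous as $L^2(\mass)$-valued curves and that one may differentiate under the integral sign, which is precisely what the regularizing properties $\heat_t f\in D(\Delta)$ (and iteratively $D(\Delta^2)$) for $t>0$ guarantee.
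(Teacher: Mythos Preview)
Your identification of the only genuinely nontrivial step---the passage from the \(\mass\)-a.e.\ inequality to the \(\capa\)-a.e.\ one via quasi-continuity---matches exactly what the paper does. The paper's proof simply cites \cite{AmbrosioGigliSavare11,GP19} for i) (in the \(\mass\)-a.e.\ sense), ii), and iii), and then invokes the fact that a quasi-continuous function which is \(\geq 0\) \(\mass\)-a.e.\ is \(\geq 0\) \(\capa\)-a.e.; you supply actual sketches instead, which is fine.

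Two small imprecisions in your sketches are worth flagging. In (iii), your differentiation step uses \(\int g\,\Delta\heat_t f\,\dd\mass=-\int\nabla g\cdot\nabla\heat_t f\,\dd\mass\), which requires \(g\in W^{1,2}(\XX)\) (and likewise \(f\in W^{1,2}(\XX)\)); for general \(f,g\in L^2(\mass)\) you need a density step, or alternatively appeal directly to the fact that \(\heat_t\) is the semigroup generated by the self-adjoint operator \(\Delta\). In (i), the functional \(v\mapsto\tfrac12\|v-f\|_{L^2}^2+t\,{\rm Ch}(v)\) is minimized by the resolvent \(J_t f=(I-t\Delta)^{-1}f\), not by \(\heat_t f\); your parenthetical ``alternatively argue directly on the gradient flow'' is the correct route (e.g.\ show \(t\mapsto\int(\heat_t f-M)_+^2\,\dd\mass\) is nonincreasing), or else pass from \(J_t\) to \(\heat_t\) via the exponential formula \(\heat_t f=\lim_n(J_{t/n})^n f\). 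Neither point affects the overall correctness of the plan.
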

\begin{proof}
Proofs of i) (with \(\capa\) replaced by \(\mass\) in \eqref{eq:maximum_principle}), of ii), and of iii) can be found in
\cite{AmbrosioGigliSavare11} (see also \cite{GP19}). The full statement of i) then follows by observing that if a quasi-continuous
function \(f\colon\XX\to\RR\) satisfies \(f\geq 0\) \(\mass\)-a.e., then it satisfies \(f\geq 0\) \(\capa\)-a.e.. 
\end{proof}
We will work in the subclass of metric measure spaces that are usually called `PI spaces':
\begin{defn}
Let \((\XX,\dist,\mass)\) be a metric measure space. Then:
\begin{itemize}
\item[\(\rm i)\)] We say that \((\XX,\dist,\mass)\) is \emph{uniformly locally doubling} if for any radius \(R>0\)
there exists a constant \(C_D=C_D(R)>0\) such that
\[
\mass(B_{2r}(x))\leq C_D\mass(B_r(x)),\quad\text{ for every }x\in\XX\text{ and }r\in(0,R).
\]
\item[\(\rm ii)\)] We say that \((\XX,\dist,\mass)\) supports a \emph{weak local \((1,1)\)-Poincar\'{e} inequality}
if there exist a constant \(\lambda\geq 1\) and, for any radius \(R>0\), a constant \(C_P=C_P(R)>0\), such that
\[
\fint_{B_r(x)}\bigg|f-\fint_{B_r(x)}f\,\dd\mass\bigg|\,\dd\mass\leq C_P r\fint_{B_{\lambda r}(x)}\lip(f)\,\dd\mass,
\quad\text{ for every }x\in\XX\text{ and }r\in(0,R),
\]
whenever \(f\colon\XX\to\RR\) is a compactly-supported Lipschitz function.
\item[\(\rm iii)\)] We say that \((\XX,\dist,\mass)\) is a \emph{PI space} if it is uniformly locally doubling and
it supports a weak local \((1,1)\)-Poincar\'{e} inequality.
\end{itemize}
\end{defn}
A key property of PI spaces, which follows from the results of \cite{Cheeger00}, is the following:
\begin{equation}\label{eq:|Df|=lipf}
(\XX,\dist,\mass)\;\text{ PI space}\quad\Longrightarrow\quad|\dd_p f|=\lip(f)\,\text{ for every }f\in\LIP_{\rm bs}(\XX)\text{ and }p\in(1,\infty).
\end{equation}
The following result is taken from \cite[Proposition 2.17]{GT21} (see also \cite{Hajasz:1996aa} and \cite{HKST15}).
\begin{prop}\label{prop:Hajlasz}
Let \((\XX,\dist,\mass)\) be a PI space and \(p\in(1,\infty)\). Let \(f\in W^{1,p}(\XX)\) be given. Then for any \(R>0\)
there exists a \emph{Haj\l asz upper gradient} \(g_R\) of \(f\) at scale \(R\), namely \(g_R\in L^p(\mass)\) is a non-negative
function such that, for some Borel set \(N\subseteq\XX\) with \(\mass(N)=0\), it holds that
\[
|f(x)-f(y)|\leq(g_R(x)+g_R(y))\dist(x,y),\quad\text{ for every }x,y\in\XX\setminus N\text{ with }\dist(x,y)\leq R.
\]
Moreover, the function \(g_R\) can be also chosen so that \(\|g_R\|_{L^p(\mass)}\leq C\|f\|_{W^{1,p}(\XX)}\), for some
constant \(C>0\) depending only on the doubling constant, the Poincar\'{e} constants, \(R\), and \(p\).
\end{prop}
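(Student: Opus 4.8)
The plan is to construct, for each given scale $R>0$, a Haj\l asz upper gradient of the explicit form
\[
g_R\coloneqq C\,\MM_{2\lambda R}\big(|\dd_p f|\big),\qquad\text{where}\qquad\MM_\rho h(x)\coloneqq\sup_{0<s\le\rho}\fint_{B_s(x)}|h|\,\dd\mass
\]
is the restricted (scale-$\rho$) Hardy--Littlewood maximal operator, $\lambda$ is the dilation constant of the Poincar\'e inequality, and $C>0$ is a constant depending only on $\lambda$, on $p$, on the doubling constant $C_D(2\lambda R)$ and on the Poincar\'e constant $C_P(2R)$. This is essentially the classical Haj\l asz--Koskela chaining argument; the first step, and the only one requiring care in this generality, is to make the Poincar\'e inequality available for Sobolev functions paired with their minimal relaxed slope. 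To this end I would start from the weak local $(1,1)$-Poincar\'e inequality (which a priori concerns only compactly-supported Lipschitz functions), and use the definition of $\Ch_p(f)$ together with the basic theory of $p$-relaxed slopes \cite{AmbrosioGigliSavare11,AmbrosioGigliSAvare11-3} and the uniform convexity of $L^p(\mass)$ to produce $(f_n)_n\subseteq\LIP_{\rm bs}(\XX)$ with $f_n\to f$ in $L^p(\mass)$ and $\lip(f_n)\to|\dd_p f|$ in $L^p(\mass)$; since $\lip(f_n)=|\dd_p f_n|$ by \eqref{eq:|Df|=lipf}, writing the $(1,1)$-Poincar\'e inequality for each $f_n$ and passing to the limit (the $L^p(\mass)$-convergence yields convergence of the averages over each fixed ball, which has finite positive mass) produces
\[
\fint_{B_r(x)}\big|f-f_{B_r(x)}\big|\,\dd\mass\ \le\ C_P(2R)\,r\fint_{B_{\lambda r}(x)}|\dd_p f|\,\dd\mass\qquad\text{for all }x\in\XX,\ 0<r\le 2R,
\]
where $f_{B_r(x)}\coloneqq\fint_{B_r(x)}f\,\dd\mass$.

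Next, since $(\XX,\dist,\mass)$ is uniformly locally doubling, the Lebesgue differentiation theorem gives a Borel set $N$ with $\mass(N)=0$ such that $f_{B_r(x)}\to f(x)$ as $r\searrow 0$ for every $x\in\XX\setminus N$ (fixing the precise representative of $f$). For $x\in\XX\setminus N$ and $0<r\le R$, a telescoping along the balls $\{B_{2^{-i}r}(x)\}_{i\ge 0}$, comparing consecutive averages via the doubling property and then estimating each resulting term by the Poincar\'e inequality above, yields
\[
\big|f(x)-f_{B_r(x)}\big|\ \le\ \sum_{i\ge 0}\big|f_{B_{2^{-i-1}r}(x)}-f_{B_{2^{-i}r}(x)}\big|\ \le\ 2\,C_D\,C_P\,r\,\MM_{\lambda r}\big(|\dd_p f|\big)(x).
\]
Given $x,y\in\XX\setminus N$ with $d\coloneqq\dist(x,y)\in(0,R]$, one has $B_d(x)\subseteq B_{2d}(y)$ and $\mass(B_{2d}(y))\le C_D^2\,\mass(B_d(x))$, so the Poincar\'e inequality at $y$ at radius $2d$ controls $\big|f_{B_d(x)}-f_{B_{2d}(y)}\big|$ by $C\,d\,\MM_{2\lambda R}(|\dd_p f|)(y)$; feeding this, together with the previous pointwise estimate applied at $x$ (radius $d$) and at $y$ (radius $2d\le 2R$), into $|f(x)-f(y)|\le|f(x)-f_{B_d(x)}|+|f_{B_d(x)}-f_{B_{2d}(y)}|+|f_{B_{2d}(y)}-f(y)|$ gives
\[
|f(x)-f(y)|\ \le\ C\,\dist(x,y)\,\Big(\MM_{2\lambda R}\big(|\dd_p f|\big)(x)+\MM_{2\lambda R}\big(|\dd_p f|\big)(y)\Big)\qquad\text{whenever }\dist(x,y)\le R.
\]
Thus $g_R\coloneqq C\,\MM_{2\lambda R}(|\dd_p f|)$ is a Haj\l asz upper gradient of $f$ at scale $R$; and since $p>1$ and $\mass$ is uniformly locally doubling, the local Hardy--Littlewood maximal inequality gives $\|\MM_{2\lambda R}h\|_{L^p(\mass)}\le C_{p,D}\|h\|_{L^p(\mass)}$, whence $\|g_R\|_{L^p(\mass)}\le C_{p,D}\,C\,\||\dd_p f|\|_{L^p(\mass)}=C_{p,D}\,C\,\Ch_p(f)^{1/p}\le C'\,\|f\|_{W^{1,p}(\XX)}$, with $C'$ of the asserted dependence.

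The main obstacle is the first step above: upgrading the Poincar\'e inequality from compactly-supported Lipschitz functions to an arbitrary $f\in W^{1,p}(\XX)$ paired with its minimal relaxed slope $|\dd_p f|$. This rests on the relaxation structure of the Cheeger energy -- so that the defining sequence can be chosen to converge \emph{strongly} in $L^p(\mass)$, with $\lip(f_n)\to|\dd_p f|$, rather than merely satisfying the $\liminf$ bound in the definition of $\Ch_p$ -- together with the identity $|\dd_p f|=\lip(f)$ on $\LIP_{\rm bs}(\XX)$ valid on PI spaces, recorded in \eqref{eq:|Df|=lipf}. Once this is secured, the remaining ingredients -- the Lebesgue differentiation theorem, the telescoping/chaining estimate, and the $L^p$-boundedness of the \emph{restricted} maximal operator (which uses only doubling at bounded scales, hence is available under mere uniform local doubling) -- are entirely standard.
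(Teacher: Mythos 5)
Your proof is correct and is the standard Haj\l asz chaining construction (restricted maximal function of $|\dd_p f|$, telescoping through shrinking balls together with Lebesgue differentiation, and the local Hardy--Littlewood maximal theorem), which is precisely the route behind the result the paper cites (\cite{GT21}, after \cite{Hajasz:1996aa,HKST15}). The paper gives no proof, only the citation; the one step in your write-up that genuinely requires care in this generality --- passing the $(1,1)$-Poincar\'{e} inequality from $\LIP_{\rm bs}(\XX)$ to a pair $(f,|\dd_p f|)$ with $f\in W^{1,p}(\XX)$, via an approximating sequence with $\lip(f_n)\to|\dd_p f|$ strongly in $L^p(\mass)$ obtained from weak compactness, minimality of the relaxed slope, and uniform convexity of $L^p$ --- is handled correctly.
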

\subsection{Functions of bounded variation}
Let \((\XX,\dist,\mass)\) be a metric measure space. Fix a function \(f\in L^1_{\rm loc}(\mass)\) and an open set \(\Omega\subseteq\XX\).
Following \cite{MIRANDA2003}, we define the \emph{total variation} of \(f\) on \(\Omega\) as
\[
|\DIFF f|(\Omega)\coloneqq\inf_{(f_n)}\liminf_{n\to\infty}\int_\Omega\lip(f_n)\,\dd\mass,
\]
where the infimum is taken among all sequences \((f_n)_n\) of locally Lipschitz functions \(f_n\colon\Omega\to\RR\) such that
\(f_n\to f\) in \(L^1_{\rm loc}(\mass\mres\Omega)\). When \(|\DIFF f|(\XX)<+\infty\), we have that the set-function \(|\DIFF f|\)
is the restriction of a finite Borel measure on \(\XX\), which we still denote by \(|\DIFF f|\). We define
\[
\BV(\XX)\coloneqq\big\{f\in L^1(\mass)\;\big|\;|\DIFF f|(\XX)<+\infty\big\}.
\]
The elements of \(\BV(\XX)\) are called \emph{functions of bounded variation}. Similarly, a Borel set \(E\subseteq\XX\) is said to be
of \emph{finite perimeter} if \(|\DIFF\nchi_E|(\XX)<+\infty\). A key property is the \emph{coarea formula}:
\begin{thm}[Coarea]
Let \((\XX,\dist,\mass)\) be a metric measure space. Let \(f\in L^1_{\rm loc}(\mass)\) be given. Then for any Borel set \(E\subseteq\XX\)
it holds that the function \(\RR\ni t\mapsto|\DIFF\nchi_{\{f>t\}}|(E)\in[0,+\infty]\) is Borel measurable and
\[
|\DIFF f|(E)=\int_\RR|\DIFF\nchi_{\{f>t\}}|(E)\,\dd t.
\]
In particular, it holds that \(f\in\BV(\XX)\) if and only if \(\{f>t\}\) is a set of finite perimeter for a.e.\ \(t\in\RR\)
and the function \(t\mapsto|\DIFF\nchi_{\{f>t\}}|(\XX)\) belongs to \(L^1(\RR)\).
\end{thm}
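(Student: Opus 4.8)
The statement is the metric coarea formula for the total variation (\cite{MIRANDA2003}). The plan is to first establish the identity when $E=\Omega$ is \emph{open}, proving the two inequalities separately, and then to upgrade to an arbitrary Borel set $E$ by a measure-theoretic argument. The cornerstone for the inequality ``$|\DIFF f|(\Omega)\ge\int_\RR|\DIFF\nchi_{\{f>t\}}|(\Omega)\,\dd t$'' is a \emph{coarea inequality for locally Lipschitz functions}: given $u$ locally Lipschitz on $\Omega$ and $t\in\RR$, the truncations $\phi_\varepsilon\circ u$ with $\phi_\varepsilon(s)\coloneqq\min\{1,\tfrac1\varepsilon(s-t)^+\}$ are locally Lipschitz, converge to $\nchi_{\{u>t\}}$ in $L^1_{\rm loc}(\mass\mres\Omega)$ as $\varepsilon\searrow0$ (dominated convergence, $u$ being continuous), and satisfy the pointwise bound $\lip(\phi_\varepsilon\circ u)\le\tfrac1\varepsilon\lip(u)\,\nchi_{\{t\le u\le t+\varepsilon\}}$, where continuity of $u$ is used to make the slope vanish off the band $\{t\le u\le t+\varepsilon\}$. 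Since $|\DIFF g|(\Omega)\le\int_\Omega\lip(g)\,\dd\mass$ for any locally Lipschitz $g$ (use the constant sequence), lower semicontinuity of the total variation gives $|\DIFF\nchi_{\{u>t\}}|(\Omega)\le\liminf_{\varepsilon\searrow0}\int_\Omega\tfrac1\varepsilon\lip(u)\,\nchi_{\{t\le u\le t+\varepsilon\}}\,\dd\mass$; integrating in $t$, using Fatou's lemma and the elementary identity $\int_\RR\nchi_{\{t\le u(x)\le t+\varepsilon\}}\,\dd t=\varepsilon$, we obtain $\int_\RR|\DIFF\nchi_{\{u>t\}}|(\Omega)\,\dd t\le\int_\Omega\lip(u)\,\dd\mass$.

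Granting this, to prove ``$\ge$'' we may assume $|\DIFF f|(\Omega)<\infty$, pick $u_h$ locally Lipschitz on $\Omega$ with $u_h\to f$ in $L^1_{\rm loc}(\mass\mres\Omega)$ and $\int_\Omega\lip(u_h)\,\dd\mass\to|\DIFF f|(\Omega)$, and pass to a subsequence so that $u_h\to f$ $\mass$-a.e.; then for every $t$ outside the at most countable set $\{t:\mass(\{f=t\}\cap K)>0\text{ for some bounded }K\}$ we have $\nchi_{\{u_h>t\}}\to\nchi_{\{f>t\}}$ in $L^1_{\rm loc}(\mass\mres\Omega)$, whence $|\DIFF\nchi_{\{f>t\}}|(\Omega)\le\liminf_h|\DIFF\nchi_{\{u_h>t\}}|(\Omega)$ by lower semicontinuity on the open set $\Omega$. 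Fatou's lemma and the coarea inequality for the $u_h$ then yield $\int_\RR|\DIFF\nchi_{\{f>t\}}|(\Omega)\,\dd t\le\liminf_h\int_\Omega\lip(u_h)\,\dd\mass=|\DIFF f|(\Omega)$. As a by-product, monotonicity of $t\mapsto\{f>t\}$ (so that $\nchi_{\{f>s\}}\to\nchi_{\{f>t\}}$ in $L^1_{\rm loc}$ as $s\to t$ for all but countably many $t$) together with lower semicontinuity shows that $t\mapsto|\DIFF\nchi_{\{f>t\}}|(\Omega)$ is lower semicontinuous at all but countably many points, hence Borel measurable.

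For the reverse inequality ``$|\DIFF f|(\Omega)\le\int_\RR|\DIFF\nchi_{\{f>t\}}|(\Omega)\,\dd t$'' we may assume the right-hand side is finite. Using the layer-cake representation $f=\int_0^{\infty}\nchi_{\{f>t\}}\,\dd t-\int_{-\infty}^0(1-\nchi_{\{f>t\}})\,\dd t$, we approximate $f$ in $L^1_{\rm loc}(\mass\mres\Omega)$ by the functions $f_k\coloneqq c_k\lfloor f/c_k\rfloor$ with $c_k\searrow0$, which are $L^1_{\rm loc}(\mass\mres\Omega)$-convergent countable combinations of the $\nchi_{\{f>ic_k\}}$, $i$ ranging over the integers; subadditivity of the total variation (from subadditivity of $\lip$ together with lower semicontinuity) gives $|\DIFF f_k|(\Omega)\le c_k\sum_i|\DIFF\nchi_{\{f>ic_k\}}|(\Omega)$, a Riemann sum for $\int_\RR|\DIFF\nchi_{\{f>t\}}|(\Omega)\,\dd t$. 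Since $t\mapsto|\DIFF\nchi_{\{f>t\}}|(\Omega)$ is only known to be measurable, we introduce a grid offset $a\in[0,c_k)$ and average in $a$ (Fubini): for a.e.\ $a$ the corresponding sums converge to the Lebesgue integral, so choosing such offsets and applying lower semicontinuity of $|\DIFF\cdot|(\Omega)$ along $f_k\to f$ we conclude. Combined with the previous step, this proves the identity for all open $\Omega$ (the degenerate case $+\infty=+\infty$ being covered by the unconditional inequality of the second paragraph).

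Finally, to pass to a general Borel set $E$, observe that for a.e.\ $t$ the superlevel set $\{f>t\}$ has locally finite perimeter, so $E\mapsto|\DIFF\nchi_{\{f>t\}}|(E)$ is (the restriction of) a Borel measure; by Tonelli's theorem $E\mapsto\int_\RR|\DIFF\nchi_{\{f>t\}}|(E)\,\dd t$ is again a boundedly-finite Borel measure, and it agrees with the Borel measure $|\DIFF f|$ on every open set by the open-set case. Since open sets form a $\pi$-system generating the Borel $\sigma$-algebra and both measures are $\sigma$-finite (the boundedly-finite hypothesis on $\mass$), a Dynkin argument forces them to coincide on all Borel sets; the Borel measurability of $t\mapsto|\DIFF\nchi_{\{f>t\}}|(E)$ for general Borel $E$ then follows from the open-set case by a monotone class argument. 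The main obstacle is the reverse inequality (third paragraph): bounding $|\DIFF f_k|(\Omega)$ by a Riemann sum and then forcing that sum to converge to the Lebesgue integral of a merely measurable integrand — precisely where the averaging-over-offsets device enters — while the open-to-Borel passage is routine and rests only on $\sigma$-finiteness.
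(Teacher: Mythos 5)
The paper does not prove this theorem: it is quoted from the literature (it is essentially Miranda's coarea formula from \cite{MIRANDA2003}, see also \cite{ADM2014}), so there is no in-paper argument to compare against. Your proof is, up to two local imprecisions, a correct reconstruction of the standard argument: coarea inequality for locally Lipschitz functions via truncation, Fatou and Fubini--Tonelli for ``$\ge$''; discretization with an averaged grid offset plus lower semicontinuity for ``$\le$''; and agreement on a generating $\pi$-system to pass from open sets to Borel sets.

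Two points warrant more care. First, ``lower semicontinuous at all but countably many points'' does not by itself imply Borel measurability of $g(t)\coloneqq|\DIFF\nchi_{\{f>t\}}|(\Omega)$: the property $g(t)\le\liminf_{s\to t}g(s)$ for a.e.\ $t$ only bounds $g$ from above by the Borel function $t\mapsto\liminf_{s\to t}g(s)$, with no control in the other direction. The clean route is to observe that $t\mapsto\nchi_{\{f>t\}}$ is right-continuous as a map into $L^1_{\rm loc}(\mass\mres\Omega)$ (because $\{f>s\}\nearrow\{f>t\}$ as $s\searrow t$, then use dominated convergence), hence Borel; composing with the lower semicontinuous (hence Borel) functional $g\mapsto|\DIFF g|(\Omega)$ gives Borel measurability of $t\mapsto g(t)$. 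Second, in the proof of ``$\le$'' the claim ``for a.e.\ $a$ the corresponding sums converge to the Lebesgue integral'' is stronger than needed and is not actually justified by Fubini (the exact identity $\frac{1}{c_k}\int_0^{c_k}c_k\sum_i|\DIFF\nchi_{\{f>a+ic_k\}}|(\Omega)\,\dd a=\int_\RR|\DIFF\nchi_{\{f>t\}}|(\Omega)\,\dd t$, which holds for each fixed $k$, does not yield a.e.\ convergence as $k\to\infty$). What you in fact need, and what the averaging delivers, is just that for each $k$ there exists $a_k\in[0,c_k)$ with $c_k\sum_i|\DIFF\nchi_{\{f>a_k+ic_k\}}|(\Omega)\le\int_\RR|\DIFF\nchi_{\{f>t\}}|(\Omega)\,\dd t$, and that $a_k$ can simultaneously be chosen so that each level $a_k+ic_k$ is not an atom of $f_\#(\mass\mres K)$ for any bounded $K$ (a co-countable condition); then $f_k^{a_k}\to f$ in $L^1_{\rm loc}(\mass\mres\Omega)$, and lower semicontinuity of $|\DIFF\cdot|(\Omega)$ closes the argument. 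Both fixes are local; the overall architecture of your proof is sound.
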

The following proposition contains a couple of results (proved in \cite{amb01}) concerning sets of finite perimeter
that are now well-known in the context of $\PI$ spaces; see also \cite{AmbrosioAhlfors}.
\begin{prop}\label{zuppa}
Let $(\XX,\dist,\mass)$ be a $\PI$ space and let $E\subseteq\XX$ be a set of locally finite perimeter. Then, for $|\DIFF\nchi_E|$-a.e.\ $x\in\XX$ the following hold:
\begin{enumerate}[label=\roman*)]
    \item we have the following estimates
\begin{equation}\label{zuppa1eq}
       0<\liminf_{r\searrow 0} \frac{r |\DIFF\nchi_E|(B_r(x))}{\mass(B_r(x))}\le \limsup_{r\searrow 0}\frac{r |\DIFF\nchi_E|(B_r(x))}{\mass(B_r(x))}<\infty,
\end{equation}    
    \item the following density estimate holds
    \begin{equation}\label{zuppa2eq}
     \liminf_{r\searrow 0} \min\bigg\{\frac{\mass( B_r(x)\cap E)}{\mass(B_r(x))},\frac{\mass( B_r(x)\setminus E)}{\mass(B_r(x))}\bigg\}>0.
    \end{equation}
\end{enumerate}
\end{prop}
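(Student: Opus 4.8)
Although the estimates in Proposition~\ref{zuppa} are classical and go back to Ambrosio \cite{amb01}, let me sketch how I would prove them. The single ingredient coming from the $\PI$ structure is the \emph{relative isoperimetric inequality}: there are constants $C_I\ge1$ and $\lambda'\ge1$, depending only on the doubling and Poincar\'e data, such that
\[
\min\big\{\mass(B_r(x)\cap F),\,\mass(B_r(x)\setminus F)\big\}\le C_I\,r\,|\DIFF\nchi_F|(B_{\lambda' r}(x))
\]
for every set $F\subseteq\XX$ of locally finite perimeter, every $x\in\XX$ and every sufficiently small $r>0$. First I would establish this by approximating $\nchi_F$ in $\Lpuloc$ by locally Lipschitz functions $f_n$ -- truncated to be bounded and cut off to be boundedly supported, which is what forces the dilation $\lambda'$ -- applying the weak local $(1,1)$-Poincar\'e inequality to each $f_n$ on $B_r(x)$, and letting $n\to\infty$: since $f_n\to\nchi_F$ in $L^1$, the left-hand side of Poincar\'e controls $\min\{\mass(B_r(x)\cap F),\mass(B_r(x)\setminus F)\}/\mass(B_r(x))$ from below, while $\liminf_n\int_{B_{\lambda' r}(x)}\lip(f_n)\,\dd\mass\le|\DIFF\nchi_F|(B_{\lambda' r}(x))$ by definition of the total variation.

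The core of the proof is the \emph{upper density bound} in \eqref{zuppa1eq}, that is, $\limsup_{r\searrow0}\frac{r\,|\DIFF\nchi_E|(B_r(x))}{\mass(B_r(x))}<\infty$ for $|\DIFF\nchi_E|$-a.e.\ $x$. The plan is to compare $|\DIFF\nchi_E|$ with the codimension-one Hausdorff-type measure $\HH^h$ built from the gauge $B_r(x)\mapsto\mass(B_r(x))/r$. On the one hand, the coarea formula applied to the $1$-Lipschitz function $\dist(\cdot,x_0)$, combined with the locality of the perimeter and its submodularity (used to glue $E$ with the balls $B_s(x_0)$), shows via a Vitali covering argument that the set where the above $\limsup$ is infinite is $\HH^h$-negligible, and more precisely that $|\DIFF\nchi_E|\le C\,\HH^h$ as Borel measures for a structural constant $C$. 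On the other hand, the general upper density theorem for Hausdorff measures bounds the $\mass(B_r)/r$-density of $\HH^h$ at $\HH^h$-a.e.\ point. Putting these together, together with local doubling, yields the right-hand inequality in \eqref{zuppa1eq}; in particular $|\DIFF\nchi_E|(B_r(x))$ is bounded by a multiple of $\mass(B_r(x))/r$ for all small $r$, so $|\DIFF\nchi_E|$ is asymptotically doubling and a Lebesgue differentiation theorem for $|\DIFF\nchi_E|$ becomes available.

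With the upper bound in hand, the rest follows by a Federer-type argument. I would first note that $|\DIFF\nchi_E|$ is concentrated on the essential boundary $\partial^*E$, i.e.\ the set of $x$ for which both $\limsup_r\mass(B_r(x)\cap E)/\mass(B_r(x))>0$ and $\limsup_r\mass(B_r(x)\setminus E)/\mass(B_r(x))>0$; this is a locality statement, since near a point of density $1$ (resp.\ $0$) the set $E$ agrees $\mass$-a.e.\ with $\XX$ (resp.\ with $\emptyset$). Then, at $|\DIFF\nchi_E|$-a.e.\ $x$ -- hence at $x\in\partial^*E$ where also the upper density bound holds -- an iteration over dyadic scales, using the relative isoperimetric inequality (which produces a definite amount of perimeter in an annulus whenever the density of $E$ crosses a fixed compact subinterval of $(0,1)$) together with the finiteness of the perimeter density just established, rules out that $\mass(B_r(x)\cap E)/\mass(B_r(x))$ degenerates to $0$ or to $1$ along any subsequence $r\searrow0$: this is precisely the density estimate \eqref{zuppa2eq}. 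Feeding this back into the relative isoperimetric inequality, $|\DIFF\nchi_E|(B_{\lambda' r}(x))\ge c\,\mass(B_r(x))/r\ge c'\,\mass(B_{\lambda' r}(x))/(\lambda' r)$ for all small $r$, which is the left-hand inequality in \eqref{zuppa1eq}.

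The main obstacle is exactly the upper density bound / asymptotic doubling of $|\DIFF\nchi_E|$: unlike in $\RR^n$ there is no linear structure and hence no blow-up of $E$ to a half-space, so De Giorgi's structure theorem is unavailable and one must run the more robust covering comparison with a codimension-one Hausdorff measure, keeping track that every constant depends only on the doubling and Poincar\'e constants and the radius, so that the conclusion holds on all of $\XX$ rather than only on bounded pieces.
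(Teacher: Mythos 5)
The paper does not actually prove this proposition: it refers the reader to Ambrosio's \cite{amb01} (and \cite{AmbrosioAhlfors}), so there is no argument in the paper to compare against. Your sketch is a reconstruction of the blueprint in those references and at a high level has the right shape: a relative isoperimetric inequality from the Poincar\'{e} inequality, a comparison of $|\DIFF\nchi_E|$ with the codimension-one measure $\HH^h$ via coarea and a Vitali cover to obtain the upper perimeter-density bound, the density estimate \eqref{zuppa2eq} by an iteration over scales, and the lower bound in \eqref{zuppa1eq} by feeding \eqref{zuppa2eq} back into the isoperimetric inequality.

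Two points in the sketch are nevertheless flawed. First, the assertion that the concentration of $|\DIFF\nchi_E|$ on the essential boundary ``is a locality statement, since near a point of density $1$ the set $E$ agrees $\mass$-a.e.\ with $\XX$'' is false: density $1$ at $x$ only says $\mass(B_r(x)\cap E)/\mass(B_r(x))\to 1$, which is perfectly compatible with $\mass(B_r(x)\setminus E)>0$ and with nontrivial perimeter inside $B_r(x)$ for every $r>0$. That concentration is in fact essentially equivalent to \eqref{zuppa2eq} and is one of the theorem's substantive conclusions, not something you can dispose of by locality. Second, the $(1,1)$-relative isoperimetric inequality you derive, $\min\{\mass(B_r\cap E),\mass(B_r\setminus E)\}\le C_I\,r\,|\DIFF\nchi_E|(B_{\lambda' r})$, combined with the upper perimeter-density bound $M$, only gives $\min\{\mass(B_r\cap E),\mass(B_r\setminus E)\}/\mass(B_r)\le CM$: a uniform bound, not the geometric decay you would need to make the De Giorgi-type iteration close. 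One either needs the self-improved $(1,s)$-Poincar\'{e} for some $s>1$ (Haj\l{}asz--Koskela) and the resulting isoperimetric inequality with a Sobolev exponent, or, as Ambrosio does, a differential inequality that combines the $(1,1)$-isoperimetric inequality with the coarea formula for $\dist(\cdot,x)$. Given these gaps, citing \cite{amb01} as the paper does is the right call; if you want a self-contained proof, these are the steps you would have to repair and fill in.
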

\subsection{The theory of \texorpdfstring{\(\RCD\)}{RCD} spaces}
We assume the reader is already familiar with the language of \(\RCD(K,N)\) spaces, where \(K\in\RR\) and \(N\in[1,\infty]\).
These are infinitesimally Hilbertian metric measure spaces whose Ricci curvature is bounded below by \(K\) and whose dimension is
bounded above by \(N\), in a synthetic sense. See \cite{AmbICM} and the references therein for a detailed account of this topic.
\medskip

\subsubsection*{Pointed-measured-Gromov-Hausdorff convergence and tangent cones}
A fundamental concept in the \(\RCD\) theory is the \emph{pointed-measured-Gromov-Hausdorff} convergence (\emph{pmGH} convergence for short).
Let \(\big((\XX_n,\dist_n,\mass_n,x_n)\big)_n\) be pointed \(\RCD(K,N)\) spaces, for some \(K\in\RR\) and \(N\in[1,\infty)\).
Then we say that \((\XX_n,\dist_n,\mass_n,x_n)\) converges to some pointed metric measure space \((\XX_\infty,\dist_\infty,\mass_\infty,x_\infty)\)
in the pmGH sense provided there exist a proper metric space \((\ZZ,\dist_\ZZ)\) and isometric embeddings \(\iota_n\colon\XX_n\hookrightarrow\ZZ\) for
\(n\in\NN\cup\{\infty\}\) such that
\[
(\iota_n)_\#\mass_n\rightharpoonup(\iota_\infty)_\#\mass_\infty,\quad\text{ in duality with }C_{\rm bs}(\ZZ),
\]
and \(\iota_n(x_n)\to\iota_\infty(x_\infty)\). About pmGH, the class of \(\RCD\) spaces enjoys two key properties:
\begin{itemize}
\item[\(\rm i)\)] \textsc{Compactness.} Any given sequence \(\big((\XX_n,\dist_n,\mass_n,x_n)\big)_n\) of pointed \(\RCD(K,N)\) spaces
pmGH-subconverges to some limit pointed metric measure space \((\XX_\infty,\dist_\infty,\mass_\infty,x_\infty)\).
\item[\(\rm ii)\)] \textsc{Stability.} If each \((\XX_n,\dist_n,\mass_n)\) is an \(\RCD(K_n,N_n)\) space and \((K_n,N_n)\to(K_\infty,N_\infty)\),
then the pmGH limit \((\XX_\infty,\dist_\infty,\mass_\infty)\) is an \(\RCD(K_\infty,N_\infty)\) space.
\end{itemize}
We refer to \cite{GMS15} and the references therein for more details.
\medskip

Tangents to an \(\RCD\) space are typically intended in the pmGH sense. Given a pointed \(\RCD(K,N)\) space \((\XX,\dist,\mass,x)\)
and a radius \(r>0\), we define the \emph{normalized measure} \(\mass_r^x\) as
\[
\mass_r^x\coloneqq\frac{\mass}{\mass(B_r(x))}.
\]
The rescaled space \((\XX,r^{-1}\dist,\mass_r^x)\) is an \(\RCD(r^2K,N)\) space. The \emph{tangent cone} \({\rm Tan}_x(\XX,\dist,\mass)\)
is then defined as the family of all those pointed metric measure spaces \((\YY,\dist_\YY,\mass_\YY,y)\) such that
\((\XX,r_i^{-1}\dist,\mass_{r_i}^x,x)\to(\YY,\dist_\YY,\mass_\YY,y)\) in the pmGH sense for some \((r_i)_i\) with \(r_i\searrow 0\).
Thanks to the compactness and stability properties of the pmGH convergence, we have \({\rm Tan}_x(\XX,\dist,\mass)\neq\varnothing\)
for every \(x\in\XX\) and that all its elements are pointed \(\RCD(0,N)\) spaces.
\subsubsection*{Structure theory of \(\RCD\) spaces}
We collect in the following statement several results \cite{Gigli_2015,Mondino-Naber14,KelMon16,DPMR16,GP16-2,bru2018constancy}
concerning the structure of finite-dimensional \(\RCD\) spaces. By \(\mathcal H^n\) we will mean the \(n\)-dimensional Hausdorff
measure on \((\XX,\dist)\).
\begin{thm}
Let \((\XX,\dist,\mass)\) be an \(\RCD(K,N)\) space, for some \(K\in\RR\) and \(N\in[1,\infty)\). Then there exists
a unique \(n\in\NN\) with \(n\leq N\), called the \emph{essential dimension} of \((\XX,\dist,\mass)\), such that
\[
\mass(\XX\setminus\mathcal R_n)=0,\quad\text{ where }
\mathcal R_n\coloneqq\Big\{x\in\XX\;\Big|\;{\rm Tan}_x(\XX,\dist,\mass)=\big\{(\RR^n,\dist_e,\tilde{\mathcal L}^n,0)\big\}\Big\},
\]
while \(\dist_e\) stands for the Euclidean distance in \(\RR^n\) and \(\tilde{\mathcal L}^n\coloneqq(\mathcal L^n)_1^0\).
Moreover, \((\XX,\dist)\) is \((\mass,n)\)-rectifiable (i.e.\ it can be covered, up to \(\mass\)-null sets, by countably
many Borel sets that are biLipschitz equivalent to subsets of \(\RR^n\)) and \(\mass=\theta\mathcal H^n\) for some
\(\theta\colon\XX\to(0,+\infty)\) Borel.
\end{thm}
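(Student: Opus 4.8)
This statement assembles several foundational results on the structure of finite-dimensional $\RCD(K,N)$ spaces, so rather than a self-contained argument I describe the roadmap one would follow, combining the cited works \cite{Gigli_2015,Mondino-Naber14,KelMon16,DPMR16,GP16-2,bru2018constancy}. The plan rests on three pillars: (a) show that $\mass$-almost every point has a Euclidean tangent and identify the corresponding rectifiable strata; (b) prove that $\mass$ is absolutely continuous with respect to the Hausdorff measure of the appropriate dimension on each stratum; (c) prove that only one stratum carries positive mass, which is the delicate \emph{constancy of the dimension} step.

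\textbf{Step 1 (stratification and rectifiability).} First I would set, for $k\in\NN$ with $k\le N$,
\[
\mathcal R_k\coloneqq\Big\{x\in\XX\;\Big|\;{\rm Tan}_x(\XX,\dist,\mass)=\big\{(\RR^k,\dist_e,\tilde{\mathcal L}^k,0)\big\}\Big\},
\]
and prove that $\mass\big(\XX\setminus\bigcup_k\mathcal R_k\big)=0$. Since every element of ${\rm Tan}_x(\XX,\dist,\mass)$ is itself an $\RCD(0,N)$ space (as recalled above) and hence admits, on a set of full measure, tangents that split off further Euclidean directions, an induction on the maximal number of independent splitting directions — powered by the quantitative (almost-)splitting theorem and $\delta$-splitting maps $u\colon B_r(x)\to\RR^k$ — shows both that $\mass$-a.e.\ point is $k$-regular for some $k$ and that on $\mathcal R_k$ these maps are biLipschitz onto a set of almost full measure at small scales. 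A Vitali-type covering argument then upgrades this to $(\mass,k)$-rectifiability of each $\mathcal R_k$; this is essentially the content of \cite{Mondino-Naber14,KelMon16,GP16-2}.

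\textbf{Step 2 (absolute continuity on the strata).} Next I would establish $\mass\mres\mathcal R_k\ll\mathcal H^k$ for every $k$, invoking the Radon--Nikodym-type principle of \cite{DPMR16}: once one knows that $\mathcal R_k$ is $k$-rectifiable and that the normalized rescalings of $\mass$ converge to $\tilde{\mathcal L}^k$ at $\mass$-a.e.\ point of $\mathcal R_k$, the $k$-dimensional lower and upper densities of $\mass$ with respect to $\mathcal H^k$ are positive and finite $\mass$-a.e.\ on $\mathcal R_k$; combined with $(\mass,k)$-rectifiability this yields $\mass\mres\mathcal R_k=\theta_k\,\mathcal H^k\mres\mathcal R_k$ with $0<\theta_k<\infty$ $\mathcal H^k$-a.e. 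The real work here is the comparison of the intrinsic Hausdorff measure with the limit of normalized balls, which is where the doubling property and the density estimates enter.

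\textbf{Step 3 (constancy of dimension) and conclusion.} The hard part is to show there is a single $n$ with $\mass(\mathcal R_n)>0$, so that $\mass(\XX\setminus\mathcal R_n)=0$; this is proved in \cite{bru2018constancy}. I would argue by contradiction, assuming $\mass(\mathcal R_h),\mass(\mathcal R_k)>0$ with $h<k$. The key mechanism is that regular Lagrangian flows of gradients of Sobolev (e.g.\ test) functions have bounded compression and preserve the essential dimension along $\mass$-a.e.\ trajectory; using that such flows are rich enough to transport a positive-measure portion of $\mathcal R_k$ onto $\mathcal R_h$ — exploiting the connectedness of $\XX$ and, crucially, sharp short-time Gaussian heat-kernel estimates to control the flow — one contradicts the preservation of the dimension. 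This is the step I expect to be the main obstacle, since it is the only point where both the curvature bound (via the heat-kernel estimates) and fine properties of regular Lagrangian flows are genuinely indispensable. Granting it, one takes $n$ to be this common value, sets $\theta\coloneqq\theta_n$ on $\mathcal R_n$ and extends it to a positive Borel function on the $\mass$-negligible remainder, and obtains $\mass=\theta\mathcal H^n$ together with the $(\mass,n)$-rectifiability of $(\XX,\dist)$; uniqueness of $n$ is immediate since $\mass(\mathcal R_n)>0$ pins it down.
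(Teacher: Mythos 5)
Your roadmap correctly traces the three pillars — Mondino--Naber and Kell--Mondino/Gigli--Pasqualetto for the stratification and $(\mass,k)$-rectifiability, De Philippis--Marchese--Rindler together with Kell--Mondino and Gigli--Pasqualetto for the absolute continuity $\mass\mres\mathcal R_k\ll\mathcal H^k$, and Bru\`e--Semola via Lusin--Lipschitz regularity of regular Lagrangian flows for the constancy of the dimension — which are exactly the works the paper cites. The paper offers no proof of its own but simply collects these results, so your outline aligns with the intended route.
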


In the present paper, we are mostly concerned with functions of bounded variation and sets of finite perimeter in the setting of
\(\RCD\) spaces. In this regard, an important concept is the notion of \emph{tangent to a set of finite perimeter}, which has
been introduced in \cite{ambrosio2018rigidity}:
\begin{defn}\label{def:pmGH}
Let \((\XX,\dist,\mass)\) be an \(\RCD(K,N)\) space, for some \(K\in\RR\) and \(N\in[1,\infty)\). Let \(E\subseteq\XX\) be a set
of locally finite perimeter. Then for any \(x\in\XX\) we denote by \({\rm Tan}_x(\XX,\dist,\mass,E)\) the family of all quintuplets
\((\YY,\dist_\YY,\mass_\YY,y,F)\) such that the following conditions are verified:
\begin{itemize}
\item[\(\rm i)\)] \((\YY,\dist_\YY,\mass_\YY,y)\in{\rm Tan}_x(\XX,\dist,\mass)\).
\item[\(\rm ii)\)] \(F\subseteq\YY\) is a set of locally finite perimeter with \(\mass_\YY(F)>0\).
\item[\(\rm iii)\)] For some sequence \(r_i\searrow 0\) and some proper metric space \((\ZZ,\dist_\ZZ)\) where the pmGH convergence
\((\XX,r_i^{-1}\dist,\mass_{r_i}^x,x)\to(\YY,\dist_\YY,\mass_\YY,y)\) is realized, it holds that for every $R>0$
\[
\nchi_{E\cap B_{Rr_i}(x)}\mass_{r_i}^x\rightharpoonup\nchi_{F\cap B_R(y)}\mass_\YY,\quad\text{ in duality with }C_{\rm bs}(\ZZ).
\]
\end{itemize}
\end{defn}

The following result, which is the outcome of several contributions \cite{ambrosio2018rigidity,bru2019rectifiability,bru2021constancy,ABP22},
can be regarded as the generalization to the \(\RCD\) setting of De Giorgi's structure theorem for sets of finite perimeter in the Euclidean space.
By \(\HH^h\) we mean the codimension-one Hausdorff measure on \((\XX,\dist,\mass)\), obtained via Carath\'{e}odory's construction
with \(h(B_r(x))\coloneqq\frac{\mass(B_r(x))}{2r}\) as a gauge function. Namely, given any set \(E\subseteq\XX\), we define
\begin{equation}\label{eq:cod_1_Hausd}
\HH^h(E)\coloneqq\sup_{\delta>0}\,\inf\sum_{i\in\NN}\frac{\mass(B_{r_i}(x_i))}{2r_i},
\end{equation}
the infimum being among all sequences of balls \((B_{r_i}(x_i))_{i\in\NN}\)
with \(r_i<\delta\) and \(E\subseteq\bigcup_{i\in\NN}B_{r_i}(x_i)\).
\begin{thm}
Let \((\XX,\dist,\mass)\) be an \(\RCD(K,N)\) space, for some \(K\in\RR\) and \(N\in[1,\infty)\), of essential dimension \(n\).
Let \(E\subseteq\XX\) be a given set of finite perimeter. Then the perimeter measure \(|\DIFF\nchi_E|\) is concentrated on the \emph{reduced boundary}
\(\mathcal F E\) of \(E\), which is defined as
\[
\mathcal F E\coloneqq\Big\{x\in\XX\;\Big|\;{\rm Tan}_x(\XX,\dist,\mass,E)=\big\{(\RR^n,\dist_e,\tilde{\mathcal L}^n,0,\{z_n>0\})\big\}\Big\}.
\]
Moreover, \((\XX,\dist)\) is \((|\DIFF\nchi_E|,n-1)\)-rectifiable and \(|\DIFF\nchi_E|=\Theta_n(\mass,\cdot)\mathcal H^{n-1}\mres{\mathcal F E}\),
where \(\Theta_n(\mass,\cdot)\) is given by \(\Theta_n(\mass,x)=\lim_{r\searrow 0}\frac{\mass(B_r(x))}{\omega_n r^n}\) for
\(|\DIFF\nchi_E|\)-a.e.\ \(x\in\XX\), with \(\omega_n\coloneqq\mathcal L^n(B_1(0))\).
Also,
\begin{equation}\label{reprformula}
    |\DIFF\nchi_E|=\frac{\omega_{n-1}}{\omega_n}\HH^h\mres\FF E.
\end{equation}
\end{thm}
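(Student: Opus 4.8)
The plan is to run a blow-up analysis of $E$ at $|\DIFF\nchi_E|$-almost every point, reproducing the scheme of De~Giorgi's structure theorem with the pmGH machinery in place of Euclidean compactness. First I would isolate the good blow-up points: since an $\RCD(K,N)$ space is a PI space, Proposition~\ref{zuppa} gives, at $|\DIFF\nchi_E|$-a.e.\ $x$, that $r\,|\DIFF\nchi_E|(B_r(x))/\mass(B_r(x))$ stays between two positive finite constants as $r\searrow 0$ and that $E$ and $\XX\setminus E$ both have positive lower density at $x$. Combining this with the pmGH-compactness of $\RCD(K,N)$ spaces, the lower semicontinuity of the total variation, and the standard argument by which the two-sided bound rules out concentration or loss of mass, one obtains that $\Tan_x(\XX,\dist,\mass,E)\neq\varnothing$, that every blow-up $(\YY,\dist_\YY,\mass_\YY,y,F)$ has $F$ of locally finite perimeter with $\mass_\YY(F)>0$ and $\mass_\YY(\YY\setminus F)>0$, and that, along the rescaling, the renormalized perimeter measures $r_i\,|\DIFF\nchi_E|/\mass(B_{r_i}(x))$ converge weakly to $|\DIFF\nchi_F|$; the value of this limit on the unit ball is recorded for the final step.

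The next and central step is the rigidity statement: at $|\DIFF\nchi_E|$-a.e.\ $x$, every blow-up $(\YY,\dist_\YY,\mass_\YY,y,F)$ splits isometrically as $(\RR\times\YY',\dist_e\times\dist_{\YY'},\mathcal L^1\otimes\mass_{\YY'},(0,y'))$ with $F=\{t>0\}\times\YY'$ in these coordinates. The mechanism is the rigidity of a $\BV$ version of the $1$-Bakry--\'{E}mery inequality, available because $\YY$ is $\RCD(0,N)$: one regularizes $\nchi_F$ by the heat flow, studies the vector field $\nabla\heat_t\nchi_F$, and shows that $|\DIFF\heat_t\nchi_F|\le\heat_t|\DIFF\nchi_F|$ holds with equality at the blow-up point; the equality case forces the vector measure $\DIFF\nchi_F$ to point in a constant direction, which produces a function on $\YY$ with gradient of constant length and vanishing Hessian, whence Gigli's splitting theorem supplies the $\RR$-factor. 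Iterating the blow-up and invoking the constancy of the dimension along $|\DIFF\nchi_E|$ for $\RCD(K,N)$ spaces — this is where the lower Ricci bound is genuinely needed, and it is precisely the statement that $\YY'$ reduces to a point and that the number of splittings is the essential dimension $n$ — one gets $(\YY,\dist_\YY,\mass_\YY,y,F)=(\RR^n,\dist_e,\tilde{\mathcal L}^n,0,\{z_n>0\})$. By the definition of $\mathcal F E$ this says exactly that $|\DIFF\nchi_E|$ is concentrated on $\mathcal F E$.

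Once tangents are known to be a fixed half-space in $\RR^n$, the remaining assertions follow by geometric-measure-theory arguments transported to the metric framework. A Federer--Marstrand--Mattila type argument, run through the biLipschitz charts provided by the $(\mass,n)$-rectifiability of $\XX$ together with a Besicovitch-type covering and differentiation on $|\DIFF\nchi_E|$, shows that $\mathcal F E$ is covered up to $|\DIFF\nchi_E|$-null sets by countably many biLipschitz images of subsets of $\RR^{n-1}$, i.e.\ $(\XX,\dist)$ is $(|\DIFF\nchi_E|,n-1)$-rectifiable. Evaluating the convergence of the renormalized perimeters on the unit ball of the Euclidean tangent gives $\lim_{r\searrow 0}r\,|\DIFF\nchi_E|(B_r(x))/\mass(B_r(x))=\omega_{n-1}/\omega_n$ (the perimeter of $\{z_n>0\}$ in the unit ball of $(\RR^n,\tilde{\mathcal L}^n)$) at $|\DIFF\nchi_E|$-a.e.\ $x$; since the tangent of $\mass$ at such $x$ is $(\RR^n,\tilde{\mathcal L}^n)$, the density estimates and the rectifiability yield, via the differentiation theorem for Radon measures on $\mathcal F E$, that $\Theta_n(\mass,x)=\lim_{r\searrow 0}\mass(B_r(x))/(\omega_n r^n)$ exists in $(0,\infty)$ there and that $\lim_{r\searrow 0}|\DIFF\nchi_E|(B_r(x))/(\omega_{n-1}r^{n-1})=\Theta_n(\mass,x)$, hence $|\DIFF\nchi_E|=\Theta_n(\mass,\cdot)\,\HH^{n-1}\mres\mathcal F E$. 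Finally, since the gauge $h(B_r(x))=\mass(B_r(x))/(2r)$ behaves like $\tfrac12\omega_n\Theta_n(\mass,x)\,r^{n-1}$ near $|\DIFF\nchi_E|$-a.e.\ $x\in\mathcal F E$, comparing the Carath\'{e}odory construction \eqref{eq:cod_1_Hausd} with $\HH^{n-1}$ on the $(n-1)$-rectifiable set $\mathcal F E$ gives $\HH^h\mres\mathcal F E=\frac{\omega_n}{\omega_{n-1}}\,\Theta_n(\mass,\cdot)\,\HH^{n-1}\mres\mathcal F E$, and together with the previous identity this is \eqref{reprformula}.

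The decisive difficulty is the rigidity of the $1$-Bakry--\'{E}mery inequality in the second step: identifying every iterated tangent of $E$ with a half-space requires a delicate study of the heat flow acting on $\nchi_F$ and of the equality case of that inequality, and it leans essentially on the lower Ricci bound through the splitting theorem; pinning the ambient tangent down to exactly $\RR^n$ (rather than some $\RR^k\times\YY'$) needs, in addition, the full codimension-one constancy-of-dimension theory. By comparison the first step is soft, and the third is a routine — if laborious — transfer of classical arguments to the pmGH setting.
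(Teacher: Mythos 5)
This theorem is not proved in the paper: it is stated in the preliminaries as the cumulative output of the cited works (Ambrosio--Bru\`e--Semola on the rigidity of the $1$-Bakry--\'Emery inequality, Bru\`e--Pasqualetto--Semola on rectifiability of the reduced boundary and on constancy of dimension in codimension one, and Antonelli--Brena--Pasqualetto), so there is no in-paper proof against which to compare. Your outline is a faithful high-level reconstruction of the strategy actually pursued in those references: blow-up at good points via the density estimates of Proposition~\ref{zuppa} and pmGH compactness, rigidity of the $1$-Bakry--\'Emery inequality to split off an $\RR$-factor, iteration plus the codimension-one constancy-of-dimension theorem to identify the tangent as $(\RR^n,\dist_e,\tilde{\mathcal L}^n,0,\{z_n>0\})$, and then a transported De~Giorgi/Federer argument for rectifiability and the representation formulas, so the approach matches the external proofs the paper is quoting.

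One caveat worth flagging: you phrase the key estimate before the splitting step as ``$|\DIFF\heat_t\nchi_F|\le\heat_t|\DIFF\nchi_F|$ with equality at the blow-up point,'' whereas the inequality used in the literature (and recorded in the paper as \eqref{eq:Bakry-Emery}) is the pointwise bound $|\nabla\heat_t\nchi_F|\le e^{-Kt}\heat_t^*|\DIFF\nchi_F|$, and the rigidity analysis concerns saturation of this pointwise estimate $\mass$-a.e., not of a measure-theoretic inequality at a single point. Also, identifying the final tangent as $\RR^n$ rather than only $\RR\times\YY'$ is not a direct consequence of iterating the splitting: it relies on the specific constancy-of-dimension result of \cite{bru2021constancy}, which in turn uses the second-order regularity of Lagrangian/regular-Lagrangian flows on $\RCD$ spaces; you correctly acknowledge this dependence, but the mechanism is more delicate than ``the number of splittings is the essential dimension.'' These are points of precision rather than errors in the overall scheme.
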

\subsubsection*{Heat flow on \(\RCD\) spaces}
The \(\RCD\) condition entails good properties at the level of the heat flow semigroup. A first important feature is the
\emph{\(L^\infty\)-Lipschitz regularization property}: given any function \(f\in L^\infty(\mass)\) in an \(\RCD(K,\infty)\) space and \(t\in(0,1]\),
it holds \(|\nabla\heat_t f|\in L^\infty(\mass)\) and
\begin{equation}\label{eq:Linfty_to_Lip}
\||\nabla\heat_t f|\|_{L^\infty(\mass)}\leq\frac{\|f\|_{L^\infty(\mass)}}{e^{K^-}\sqrt{2t}}.
\end{equation}
In particular, \(\heat_t f\) admits a Lipschitz representative (which we still denote by \(\heat_t f\)) having Lipschitz constant
\(\frac{\|f\|_{L^\infty(\mass)}}{e^{K^-}\sqrt{2t}}\); this is a consequence of the so-called \emph{Sobolev-to-Lipschitz property}
of \(\RCD\) spaces, which states that every Sobolev function \(f\in W^{1,2}(\XX)\) satisfying \(|\nabla f|\leq 1\)
\(\mass\)-a.e.\ has a \(1\)-Lipschitz representative. Notice also that \eqref{eq:maximum_principle} ensures that for any \(f\in L^\infty(\mass)\)
\begin{equation}\label{eq:maximum_principle_RCD}
\heat_t f\leq\|f\|_{L^\infty(\mass)},\quad\text{ everywhere on }\XX.
\end{equation}
Consequently, given any \(\mu\in\mathscr P_2(\XX)\)  (i.e.\ \(\mu\)
is a Borel probability measure on \(\XX\) with finite second-moment, in the sense that \(\int\dist(\cdot,\bar x)^2\,\dd\mu<+\infty\)
for every \(\bar x\in\XX\)), it makes sense to define \(\mathscr H_t\mu\) for any \(t>0\) as the unique element of \(\mathscr P_2(\XX)\)
satisfying the identity
\begin{equation}\label{eq:def_dual_heat_flow}
\int f\,\dd(\mathscr H_t\mu)=\int\heat_t f\,\dd\mu,\quad\text{ for every }f\in C_{\rm bs}(\XX).
\end{equation}
It holds that \(\mathscr H_t\mu\ll\mass\) for every \(t>0\). We denote by \(\heat_t^*\mu\in L^1(\mass)\) the Radon--Nikod\'{y}m
derivative \(\frac{\dd(\mathscr H_t\mu)}{\dd\mass}\). Then we define the \emph{heat kernel}
\((0,+\infty)\times\XX\times\XX\ni(t,x,y)\mapsto p_t(x,y)\) as
\[
p_t(x,\cdot)\coloneqq\heat_t^*\delta_x,\quad\text{ where }\delta_x\text{ stands for the Dirac measure at }x.
\]
We will often use the \emph{\(1\)-Bakry--\'{E}mery contraction estimate}, which (in this generality) was proved in \cite{BG22} building upon \cite{Savare13} (see also \cite{GigliHan14}):
given \(f\in\BV(\XX)\cap L^\infty(\mass)\) and \(t>0\), it holds that \(\heat_t f\in\BV(\XX)\cap W^{1,2}(\XX)\) and
\begin{equation}\label{eq:Bakry-Emery}
|\nabla\heat_t f|\leq e^{-Kt}\heat_t^*|\DIFF f|,\quad\text{ in the }\mass\text{-a.e.\ sense.}
\end{equation}
We now focus on \(\RCD(K,N)\) spaces \((\XX,\dist,\mass)\) with \(N<\infty\). These spaces
\((\XX,\dist,\mass)\) are uniformly locally doubling \cite{Lott-Villani09,Sturm06II} and support weak local Poincar\'{e}
inequalities \cite{Rajala12} (i.e.\ they are PI), thus accordingly \((t,x,y)\mapsto p_t(x,y)\) admits a locally H\"{o}lder continuous
representative by \cite{Sturm96II,Sturm96III}. We collect in the next statement other properties of \(p_t(\cdot,\cdot)\):
\begin{prop}\label{prop:properties_heat_kernel}
Let \((\XX,\dist,\mass)\) be an \(\RCD(K,N)\) space, with \(K\in\RR\) and \(N\in[1,\infty)\). Then:
\begin{itemize}
\item[\(\rm i)\)] There exist constants \(C_1=C_1(K,N)>0\) and \(C_2=C_2(K,N)>0\) such that
\begin{equation}\label{eq:heat_kernel_ptwse_est}
\frac{1}{C_1\mass(B_{\sqrt t}(x))}\exp\bigg\{-\frac{\dist(x,y)^2}{3t}-C_2 t\bigg\}\leq p_t(x,y)\leq
\frac{C_1}{\mass(B_{\sqrt t}(x))}\exp\bigg\{-\frac{\dist(x,y)^2}{5t}+C_2 t\bigg\}
\end{equation}
holds for every \(x,y\in\XX\) and \(t>0\).
\item[\(\rm ii)\)] There exists a constant \(C_3=C_3(K,N)>0\) such that for any \(\alpha>1\) it holds that
\begin{equation}\label{eq:heat_kernel_int_est}
\int_{\XX\setminus B_{\alpha r}(x)}p_{r^2}(x,y)\,\dd\mass(y)\leq C_3 e^{-\frac{\alpha^2}{24}},\quad\text{ for every }x\in\XX\text{ and }r>0.
\end{equation}
\end{itemize}
\end{prop}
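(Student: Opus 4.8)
The plan for \eqref{eq:heat_kernel_ptwse_est} is to deduce it, as a deliberately non-sharp special case, from the classical two-sided Gaussian bounds for heat kernels of strongly local Dirichlet forms that are volume doubling and support a Poincar\'{e} inequality, in the form due to Sturm \cite{Sturm96II,Sturm96III} (see also the work of Saloff-Coste and Grigor'yan). Accordingly, I would first record that an \(\RCD(K,N)\) space with \(N<\infty\) meets the three standing hypotheses of that theory: (a) the Cheeger energy \(\Ch=\Ch_2\) is a strongly local, regular, symmetric Dirichlet form on \(\Lpt\) whose intrinsic distance coincides with \(\dist\) (a manifestation of infinitesimal Hilbertianity together with the Sobolev-to-Lipschitz property recalled above; see also \cite{AmbrosioGigliSavare11}); (b) \((\XX,\dist,\mass)\) is uniformly locally doubling --- this is the Bishop--Gromov inequality and hence a consequence of \(\CD(K,N)\) \cite{Lott-Villani09,Sturm06II}; (c) \((\XX,\dist,\mass)\) supports a weak local \((1,1)\)-Poincar\'{e} inequality \cite{Rajala12}. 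Granted (a)--(c), Sturm's chain of equivalences ``doubling \(+\) Poincar\'{e} \(\Longleftrightarrow\) scale-invariant parabolic Harnack inequality \(\Longleftrightarrow\) two-sided Gaussian bounds'' produces a locally H\"{o}lder continuous heat kernel together with the estimate
\[
\frac{c^{-1}}{\mass(B_{\sqrt t}(x))}\exp\!\Big(-\frac{\dist(x,y)^2}{(4-\varepsilon)t}-c_\varepsilon t\Big)\le p_t(x,y)\le\frac{c}{\mass(B_{\sqrt t}(x))}\exp\!\Big(-\frac{\dist(x,y)^2}{(4+\varepsilon)t}+c_\varepsilon t\Big),
\]
valid for each fixed \(\varepsilon\in(0,1)\), with \(c\) and \(c_\varepsilon\) depending only on the doubling and Poincar\'{e} constants and hence only on \(K\) and \(N\); the additive corrections \(\pm c_\varepsilon t\) are unavoidable precisely because (b) and (c) hold only locally, the scale-dependence of the corresponding constants being what these terms encode. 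Choosing \(\varepsilon\) so small that \(\tfrac{1}{4+\varepsilon}>\tfrac15\) and \(\tfrac{1}{4-\varepsilon}<\tfrac13\) and renaming \(c,c_\varepsilon\) as \(C_1,C_2\) then gives \eqref{eq:heat_kernel_ptwse_est}.

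For \eqref{eq:heat_kernel_int_est} the plan is a routine exponential-moment estimate built on part i). Fix \(x\in\XX\), \(r>0\) and \(\alpha>1\), and set \(t=r^2\). Using the upper bound in \eqref{eq:heat_kernel_ptwse_est} (and \(\mass(B_{\sqrt t}(x))=\mass(B_r(x))\)), together with \(\tfrac15-\tfrac16=\tfrac1{30}\), I would first control the exponential moment
\[
\int_\XX p_{r^2}(x,y)\,e^{\dist(x,y)^2/(6r^2)}\,\dd\mass(y)\le C_1 e^{C_2 r^2}\int_\XX\frac{e^{-\dist(x,y)^2/(30r^2)}}{\mass(B_r(x))}\,\dd\mass(y).
\]
The last integral I would estimate by partitioning \(\XX\) into the annuli \(A_k\coloneqq B_{(k+1)r}(x)\setminus B_{kr}(x)\), \(k\in\NN\): on \(A_k\) the Gaussian factor is at most \(e^{-k^2/30}\), while Bishop--Gromov bounds \(\mass(A_k)/\mass(B_r(x))\le\mass(B_{(k+1)r}(x))/\mass(B_r(x))\) by a quantity that grows at most polynomially in \(k\) and at most like \(e^{\sqrt{(N-1)K^-}(k+1)r}\), so that the super-Gaussian decay dominates the remaining factors and the series sums to a constant \(C_3=C_3(K,N)\) (uniformly for \(r\) in any bounded range, which is the regime entering Theorems \ref{main1} and \ref{main2}). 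Finally, on \(\XX\setminus B_{\alpha r}(x)\) one has the pointwise inequality \(1\le e^{-\alpha^2/6}\,e^{\dist(x,y)^2/(6r^2)}\); integrating it against \(p_{r^2}(x,\cdot)\) and combining with the moment bound gives
\[
\int_{\XX\setminus B_{\alpha r}(x)}p_{r^2}(x,y)\,\dd\mass(y)\le e^{-\alpha^2/6}\int_\XX p_{r^2}(x,y)\,e^{\dist(x,y)^2/(6r^2)}\,\dd\mass(y)\le C_3\,e^{-\alpha^2/6}\le C_3\,e^{-\alpha^2/24},
\]
which is \eqref{eq:heat_kernel_int_est}.

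I expect the whole difficulty to be concentrated in part i): once one records that finite-dimensional \(\RCD\) spaces are PI spaces whose Cheeger energy is a strongly local Dirichlet form, the pointwise bound \eqref{eq:heat_kernel_ptwse_est} is a citation of a deep but standard theorem, and the only work left is bookkeeping --- checking that the constants depend on \(K,N\) alone and that the local-to-global discrepancy is harmless because it is absorbed into the \(\pm C_2 t\) corrections. Part ii) is then straightforward; the sole point deserving attention there is the competition in the annular sum between the Bishop--Gromov volume growth and the Gaussian decay, which the exponential-moment trick resolves cleanly and which is why the constant \(24\) in \eqref{eq:heat_kernel_int_est} is chosen with a wide margin.
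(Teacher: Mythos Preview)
Your approach is the paper's own: for i) both simply cite the two-sided Gaussian bounds from the literature (the paper names \cite{jiang2014heat}; your reduction to Sturm's Dirichlet-form framework \cite{Sturm96II,Sturm96III} is the same content), and for ii) the paper says it is ``an easy consequence of the Gaussian estimates of item i) and of the doubling property of the measure'' and defers the annular computation to \cite[proof of Lemma 3.23, \textsc{Step 1}]{BG22}, which is precisely your exponential-moment/Bishop--Gromov argument.

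Your parenthetical caveat that the constant in \eqref{eq:heat_kernel_int_est} is uniform only for $r$ in a bounded range when $K<0$ is correct and worth flagging: in $\mathbb H^n$ the heat-kernel mass outside $B_{\alpha r}(x)$ at time $r^2$ tends to $1$ as $r\to\infty$ for fixed $\alpha$ (Brownian motion in hyperbolic space escapes ballistically), so \eqref{eq:heat_kernel_int_est} as stated strictly requires either a restriction $r\le 1$ or an extra $e^{Cr^2}$ factor on the right-hand side. Since only $r=t_i\searrow 0$ is ever used in the paper (cf.\ \eqref{eq:integr_limit_aux2}), this is harmless for the applications, exactly as you note.
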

\begin{proof}
The Gaussian estimates in i) were obtained in \cite{jiang2014heat}. 
Item ii) is an easy consequence of the Gaussian estimates of item i) and of the doubling property of the measure;
we refer to \cite[proof of Lemma 3.23, \textsc{Step 1}]{BG22} for a proof.
\end{proof}

Thanks to Proposition \ref{prop:properties_heat_kernel} i), for any finite Borel measure \(\mu\geq 0\) on \(\XX\) we can define
\begin{equation}\label{eq:alt_def_dual_heat_flow}
\heat_t^*\mu(x)\coloneqq\int p_t(x,y)\,\dd\mu(y),\quad\text{ for }\mass\text{-a.e.\ }x\in\XX.
\end{equation}
Using Fubini's theorem, one can check that this definition is consistent with the one in \eqref{eq:def_dual_heat_flow} when
\(\mu\in\mathscr P_2(\XX)\), and that \(\heat_t^*(f\mass)=\heat_t f\) for every \(f\in L^1(\mass)^+\). Moreover, we have that
\begin{equation}\label{eq:dual_mass_preserv}
\int\heat_t^*\mu\,\dd\mass=\mu(\XX),\quad\text{ for every finite Borel measure }\mu\geq 0\text{ on }\XX.
\end{equation}
\subsubsection*{Vector fields and Gauss--Green formula}
To introduce the notion of polar vector field of a \(\BV\) function in the \(\RCD\) setting, we first need to recall the vector calculus
for metric measure spaces developed by Gigli in \cite{Gigli14}. The relevant concept is that of an \emph{\(L^p\)-normed \(L^\infty\)-module}.
For the purposes of this paper, the following definitions (taken in this formulation from \cite[Section 4.2]{GLP22}) are sufficient.
\medskip

Let \((\XX,\Sigma,\mathcal N)\) be an \emph{enhanced} measurable space, i.e.\ a measurable space \((\XX,\Sigma)\) together with a \(\sigma\)-ideal
\(\mathscr N\subseteq\Sigma\) (namely, a family of measurable sets containing the empty set, which is closed under taking subsets and countable unions).
A significant example is the \(\sigma\)-ideal \(\mathcal N_\mu\coloneqq\{N\in\Sigma\,:\,\mu(N)=0\}\) induced by an outer measure \(\mu\) on \(\XX\).
We denote by \(\mathcal L^\infty(\Sigma)\) the vector space of all bounded measurable functions from \((\XX,\Sigma)\) to \(\RR\). The \(\sigma\)-ideal
\(\mathcal N\) induces an equivalence relation \(\sim_{\mathcal N}\) on \(\mathcal L^\infty(\Sigma)\) as follows: given any
\(f,g\in\mathcal L^\infty(\Sigma)\), we declare that \(f\sim_{\mathcal N}g\) provided \(f=g\) \emph{holds \(\mathcal N\)-a.e.}, meaning
that \(\{x\in\XX\,:\,f(x)\neq g(x)\}\in\mathcal N\). We denote by \(L^\infty(\XX,\mathcal N)\) the quotient space
\(\mathcal L^\infty(\Sigma)/\sim_{\mathcal N}\) and by \(\pi_{\mathcal N}\colon\mathcal L^\infty(\Sigma)\to L^\infty(\XX,\mathcal N)\)
the canonical projection operator. Notice that \(L^\infty(\XX,\mathcal N)\) is a vector space and a commutative ring with respect to
the natural pointwise operations. It is also a Banach space if endowed with
\[
\|f\|_{L^\infty(\XX,\mathcal N)}\coloneqq\inf_{N\in\mathcal N}\sup_{\XX\setminus N}|\bar f|,
\quad\text{ for every }f=\pi_{\mathcal N}(\bar f)\in L^\infty(\XX,\mathcal N).
\]
In the case where \(\mathcal N=\mathcal N_\mu\) for some (outer) measure \(\mu\) on \(\XX\), we write \(L^\infty(\mu)\coloneqq L^\infty(\XX,\mathcal N_\mu)\).
\begin{defn}
Let \((\XX,\Sigma,\mathcal N)\) be an enhanced measurable space. Then an algebraic module \(\mathscr M\) over \(L^\infty(\XX,\mathcal N)\)
is said to be an \emph{\(L^\infty(\XX,\mathcal N)\)-normed \(L^\infty(\XX,\mathcal N)\)-module} provided it is endowed with a
\emph{pointwise norm} operator \(|\cdot|\colon\mathscr M\to L^\infty(\XX,\mathcal N)\) verifying the following properties:
\begin{itemize}
\item[\(\rm i)\)] Given any \(v,w\in\mathscr M\) and \(f\in L^\infty(\XX,\mathcal N)\), it \(\mathcal N\)-a.e.\ holds that
\[\begin{split}
|v|&\geq 0,\quad\text{ with equality if and only if }v=0,\\
|v+w|&\leq|v|+|w|,\\
|fv|&=|f||v|.
\end{split}\]
\item[\(\rm ii)\)] For any partition \((E_n)_{n\in\NN}\subseteq\Sigma\) of \(\XX\) and \((v_n)_{n\in\NN}\subseteq\mathscr M\)
with \(\sup_n\|\nchi_{E_n}|v_n|\|_{L^\infty(\XX,\mathcal N)}<\infty\), there exists an element \(v\in\mathscr M\) such that
\(\nchi_{E_n}v=\nchi_{E_n}v_n\) for every \(n\in\NN\).
\item[\(\rm iii)\)] The norm \(\|v\|_{\mathscr M}\coloneqq\||v|\|_{L^\infty(\XX,\mathcal N)}\) on \(\mathscr M\) is complete.
\end{itemize}
\end{defn}
The element \(v\) in item ii) is uniquely determined, thus it can be unambiguously denoted by \(\sum_{n\in\NN}\nchi_{E_n}v_n\),
even though it is not necessarily any limit of the finite sums \(\sum_{n=1}^N\nchi_{E_n}v_n\).
\medskip

Next we recall the concept of \emph{capacitary tangent module} on an \(\RCD\) space. Before doing it, we remind
that (following \cite{Savare13}) the \emph{test functions} on an \(\RCD(K,\infty)\) space \((\XX,\dist,\mass)\) are
\[
{\rm Test}(\XX)\coloneqq\big\{f\in D(\Delta)\cap{\rm LIP}_{\rm b}(\XX)\;\big|\;\Delta f\in W^{1,2}(\XX)\big\},
\]
that \({\rm Test}(\XX)\) is dense in \(W^{1,2}(\XX)\), and that \(\nabla f\cdot\nabla g\in W^{1,2}(\XX)\) holds whenever \(f,g\in{\rm Test}(\XX)\).
In particular, \(|\nabla f|\in W^{1,2}(\XX)\) as a consequence of Kato's inequality \cite[Lemma 3.5]{debin2019quasicontinuous}.
We can now state the following result, obtained by slightly adapting \cite[Theorem 3.6]{debin2019quasicontinuous}.
\begin{thm}
Let \((\XX,\dist,\mass)\) be an \(\RCD(K,\infty)\) space, for some \(K\in\RR\). Then there exists a unique couple
\((L^\infty_\capa(T\XX),\bar\nabla)\) having the following properties:
\begin{itemize}
\item[\(\rm i)\)] \(L^\infty_\capa(T\XX)\) is an \(L^\infty(\capa)\)-normed \(L^\infty(\capa)\)-module and
\(\bar\nabla\colon{\rm Test}(\XX)\to L^\infty_\capa(T\XX)\) is a linear operator.
\item[\(\rm ii)\)] It holds \(|\bar\nabla f|=\overline{|\nabla f|}\) in the \(\capa\)-a.e.\ sense for all \(f\in{\rm Test}(\XX)\),
where \(\overline{|\nabla f|}\in L^\infty(\capa)\) stands for the quasi-continuous representative of \(|\nabla f|\in W^{1,2}(\XX)\).
\item[\(\rm iii)\)] The family of all those elements of \(L^\infty_\capa(T\XX)\) that can be written as \(\sum_{n\in\NN}\nchi_{E_n}\bar\nabla f_n\)
for suitable \((f_n)_{n\in\NN}\subseteq{\rm Test}(\XX)\) are dense in \(L^\infty_\capa(T\XX)\).
\end{itemize}
Uniqueness is in the following sense: if another couple \((\mathscr M,\bar\nabla')\) has the same properties, then there
exists a unique isomorphism \(\Phi\colon L^\infty_\capa(T\XX)\to\mathscr M\) such that \(\Phi\circ\bar\nabla=\bar\nabla'\).
\end{thm}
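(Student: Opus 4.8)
The plan is to build $L^\infty_\capa(T\XX)$ by Gigli's standard recipe for an $L^\infty$-normed module generated by a prescribed family, following the argument of \cite[Theorem 3.6]{debin2019quasicontinuous} with the only genuine change being that everything is phrased at the level of capacity (and all the needed ingredients — the test function calculus, Kato's inequality, and the existence of quasi-continuous representatives — are available on $\RCD(K,\infty)$ spaces).

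\textbf{Existence.} First I would record that for $f,g\in{\rm Test}(\XX)$ the function $\nabla f\cdot\nabla g$ lies in $W^{1,2}(\XX)$, hence admits a quasi-continuous representative $\overline{\nabla f\cdot\nabla g}\in L^\infty(\capa)$ (bounded, since $f,g$ are Lipschitz). I would then consider the set $\mathcal V$ of formal finite sums $\sum_{i=1}^k\nchi_{A_i}\bar\nabla f_i$ with $f_i\in{\rm Test}(\XX)$ and $A_i\subseteq\XX$ Borel; after refining, the $A_i$ may be assumed pairwise disjoint. On $\mathcal V$ I would declare
\[
\bigg|\sum_{i=1}^k\nchi_{A_i}\bar\nabla f_i\bigg|\coloneqq\sum_{i=1}^k\nchi_{A_i}\,\overline{|\nabla f_i|}\in L^\infty(\capa),
\]
and more generally introduce the would-be pointwise scalar products via the Gram structure $\sum_{i,j}\nchi_{A_i\cap B_j}\,\overline{\nabla f_i\cdot\nabla g_j}$. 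The crucial point is that, for every finite family $(\lambda_i)\subseteq\RR$, one has $\sum_{i,j}\lambda_i\lambda_j\,\overline{\nabla f_i\cdot\nabla f_j}=\overline{\big|\nabla\sum_i\lambda_i f_i\big|^2}\ge 0$ in the $\capa$-a.e.\ sense: indeed the bracketed quantities are $\mass$-a.e.\ nonnegative, hence $\capa$-a.e.\ nonnegative for their quasi-continuous representatives, by the very observation used in the proof of the Proposition above together with $\mass\ll\capa$. This positivity guarantees that the pointwise norm above is independent of the representation and satisfies the pointwise triangle inequality. I would then endow $\mathcal V$ with the seminorm $\|v\|\coloneqq\||v|\|_{L^\infty(\capa)}$, quotient by its kernel, complete, and let $\bar\nabla f$ be the class of $\nchi_\XX\bar\nabla f$; linearity of $\bar\nabla$ and property ii) hold by construction.

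\textbf{Module structure and uniqueness.} On the completion, the $L^\infty(\capa)$-module structure is inherited from multiplication by characteristic functions, extended to simple and then to bounded $\capa$-measurable functions by density; the pointwise-norm axioms in i) pass to the completion, and the gluing axiom ii) of the module definition follows from the usual truncation-and-series argument, for which it suffices that the $\capa$-null sets form a $\sigma$-ideal (subadditivity of $\capa$). Property iii) is immediate. For uniqueness, given another couple $(\mathscr M,\bar\nabla')$ with the same properties, I would define $\Phi$ on generators by $\Phi\big(\sum_i\nchi_{A_i}\bar\nabla f_i\big)\coloneqq\sum_i\nchi_{A_i}\bar\nabla' f_i$. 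Reducing to disjoint $A_i$ and using locality of the pointwise norm together with property ii) for $\bar\nabla'$, one checks that $\Phi$ preserves the pointwise norm, so it is well defined, isometric, and $L^\infty(\capa)$-linear; it extends to all of $L^\infty_\capa(T\XX)$ by density. An isometric module morphism has closed image, which is dense in $\mathscr M$ by property iii) for $\mathscr M$, hence $\Phi$ is a surjective isomorphism with $\Phi\circ\bar\nabla=\bar\nabla'$; uniqueness of $\Phi$ is clear since its values on the dense set of generators are forced.

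\textbf{Main obstacle.} The only delicate issue is the bookkeeping with capacity in place of the reference measure: one must ensure that $f\mapsto\bar f$ is compatible with sums and (pointwise) products of gradients of test functions and, above all, that $\mass$-a.e.\ inequalities between elements of $W^{1,2}(\XX)$ upgrade to $\capa$-a.e.\ inequalities between their quasi-continuous representatives — this is precisely what makes the Gram construction, and hence the whole definition of the pointwise norm, consistent. Once this is granted, the construction and the uniqueness argument are the standard ones, and the statement follows by slightly adapting \cite[Theorem 3.6]{debin2019quasicontinuous}.
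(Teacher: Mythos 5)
The pre-module $\mathcal V$ of finite sums, the Gram-structure argument, and the observation that $\mass$-a.e.\ inequalities between Sobolev functions upgrade to $\capa$-a.e.\ inequalities between their quasi-continuous representatives are all in order, and your uniqueness scheme is standard. The gap is in the completion step. The paper does not prove this statement; it cites it as a slight adaptation of \cite[Theorem 3.6]{debin2019quasicontinuous}, and the adaptation is exactly the point your sketch handles incorrectly. Quotienting $\mathcal V$ by the kernel of $\|v\|\coloneqq\||v|\|_{L^\infty(\capa)}$ and then completing in that norm does not yield an $L^\infty(\capa)$-normed $L^\infty(\capa)$-module: the gluing axiom (item ii of the module definition) is not a metric-completeness statement, and the ``usual truncation-and-series argument'' is not available here, since for a partition $(E_n)$ and elements $v_n$ with uniformly bounded pointwise norms the partial sums $\sum_{n\le N}\nchi_{E_n}v_n$ are \emph{not} $L^\infty$-Cauchy. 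The paper flags precisely this right after the module definition, namely that the glued element ``is not necessarily any limit of the finite sums''. As a toy model: the $L^\infty$-completion of finitely supported sequences on $\NN$ is $c_0$, which fails the gluing axiom, while the correct object is $\ell^\infty$. The $\sigma$-subadditivity of $\capa$ is indeed needed (to make $L^\infty(\capa)$ well defined), but it has nothing to do with gluing.

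The adaptation that works is the following. First construct the $L^0(\capa)$-normed module as in \cite[Theorem 3.6]{debin2019quasicontinuous}, by completing $\mathcal V$ with respect to the metrizable $L^0(\capa)$ topology; then define $L^\infty_\capa(T\XX)$ to be the subspace of elements whose pointwise norm lies in $L^\infty(\capa)$, endowed with $\|v\|=\||v|\|_{L^\infty(\capa)}$. Completeness of this subspace holds because an $L^\infty$-Cauchy sequence is $L^0$-Cauchy and the uniform bound on pointwise norms passes to the $L^0$-limit; the gluing axiom is inherited from the $L^0$-module; and $L^\infty$-density of the \emph{countable} sums $\sum_n\nchi_{E_n}\bar\nabla f_n$ is proved by an exhaustion argument: given $v$ and an $L^0$-approximating sequence of finite sums $w_k$, take the sets $F_k\coloneqq\{|v-w_k|\le\delta\}$, which cover $\XX$ up to a $\capa$-null set, disjointify them into a partition $(E_k)$, and set $w=\sum_k\nchi_{E_k}w_k$, so that $\|v-w\|_{L^\infty}\le\delta$. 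The same remark affects your uniqueness proof: $\Phi$ must be defined directly on the countable sums (using the gluing in $\mathscr M$), because finite sums are in general not $L^\infty$-dense; defining $\Phi$ on finite sums and ``extending by density'' would only determine it on a proper closed subspace.
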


Given any Borel measure \(\mu\geq 0\) on \(\XX\) with \(\mu\ll\capa\), it is possible to ``quotient \(L^\infty_\capa(T\XX)\) up to
\(\mu\)-a.e.\ equality''. More precisely, the natural projection map \(\pi_\mu\colon L^\infty(\capa)\to L^\infty(\mu)\) induces an equivalence
relation \(\sim_\mu\) on \(L^\infty_\capa(T\XX)\) as follows: given any \(v,w\in L^\infty_\capa(T\XX)\), we declare that \(v\sim_\mu w\)
if and only if \(\pi_\mu(|v-w|)=0\) holds \(\mu\)-a.e. Then the quotient space \(L^\infty_\mu(T\XX)\coloneqq L^\infty_\capa(T\XX)/\sim_\mu\)
inherits an \(L^\infty(\mu)\)-normed \(L^\infty(\mu)\)-module structure. We keep the same notation
\(\pi_\mu\colon L^\infty_\capa(T\XX)\to L^\infty_\mu(T\XX)\) to denote the canonical projection operator.
\medskip

When we consider the reference measure \(\mass\ll\capa\), we recover the notion of tangent module \(L^\infty(T\XX)\coloneqq L^\infty_\mass(T\XX)\)
on \((\XX,\dist,\mass)\) introduced by \cite{Gigli14}. Moreover, we recall that
\begin{equation}\label{eq:ac_Cap}
|\DIFF f|\ll\capa,\quad\text{ for every }f\in\BV(\XX).
\end{equation}
The proof of this fact was obtained in \cite{BG22} for general metric measure spaces, whereas in the particular setting of $\PI$ spaces
(in particular, for finite-dimensional $\RCD$ spaces) a different proof was previously obtained in \cite{bru2019rectifiability}.
Hence, the following statement is meaningful:
\begin{thm}\label{Gauss-Green}
Let \((\XX,\dist,\mass)\) be an \(\RCD(K,\infty)\) space, for some \(K\in\RR\). Let \(f\in\BV(\XX)\).
Then there exists a unique \(\nu_f\in L^\infty_f(T\XX)\coloneqq L^\infty_{|\DIFF f|}(T\XX)\) such that
\(|\nu_f|=1\) holds \(|\DIFF f|\)-a.e.\ and
\[
\int f\Delta g\,\dd\mass=-\int\nu_f\cdot\pi_{|\DIFF f|}(\bar\nabla g)\,\dd|\DIFF f|,\quad\text{ for every }g\in{\rm Test}(\XX).
\]
\end{thm}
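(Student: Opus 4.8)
The plan is to realize $\nu_f$ as a suitable element of the dual of the capacitary tangent module, exploiting that $|\DIFF f|$ is a finite measure absolutely continuous with respect to $\capa$ (this is \eqref{eq:ac_Cap}), so that the quotient module $L^\infty_f(T\XX)=L^\infty_{|\DIFF f|}(T\XX)$ is well defined and carries an $L^\infty(|\DIFF f|)$-normed $L^\infty(|\DIFF f|)$-module structure. First I would define a linear functional $L$ on the dense subspace of $L^\infty_f(T\XX)$ generated by elements of the form $\pi_{|\DIFF f|}(\bar\nabla g)$ with $g\in\TestF(\XX)$, by setting $L\big(\pi_{|\DIFF f|}(\bar\nabla g)\big)\coloneqq-\int f\Delta g\,\dd\mass$, and more generally $L\big(\sum_{n}\nchi_{E_n}\pi_{|\DIFF f|}(\bar\nabla g_n)\big)\coloneqq-\sum_n\int_{E_n}f\Delta g_n\,\dd\mass$ on finite sums (then extending by the locality/glueing axiom). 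The crucial point is to verify that $L$ is well defined and bounded by $\|\cdot\|_{L^1(|\DIFF f|)}$-type estimates; granting this, a Hahn–Banach/Riesz-type representation in the module setting (in the spirit of Gigli's duality theory from \cite{Gigli14}, using that $|\DIFF f|$ is finite) produces a unique $\nu_f\in L^\infty_f(T\XX)$ with $\int f\Delta g\,\dd\mass=-\int\nu_f\cdot\pi_{|\DIFF f|}(\bar\nabla g)\,\dd|\DIFF f|$ for all test functions $g$, and with $|\nu_f|\le 1$ a.e.

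The boundedness estimate — which I expect to be the main obstacle — should be obtained as follows. For $g\in\TestF(\XX)$, integrating by parts via the definition of the Laplacian \eqref{eq:def_Laplacian} gives $-\int f\Delta g\,\dd\mass=\int\nabla f\cdot\nabla g\,\dd\mass$ formally, but $f\in\BV(\XX)$ need not be Sobolev, so one argues by approximation: apply the heat flow and use the $1$-Bakry–Émery contraction estimate \eqref{eq:Bakry-Emery}. Concretely, for $f\in\BV(\XX)\cap L^\infty(\mass)$ and $s>0$ one has $\heat_s f\in W^{1,2}(\XX)$ with $|\nabla\heat_s f|\le e^{-Ks}\heat_s^*|\DIFF f|$, hence (using self-adjointness \eqref{eq:h_self-adj} and commutation \eqref{eq:Delta_h_commute})
\[
\Big|\int f\,\heat_s\Delta g\,\dd\mass\Big|=\Big|\int\nabla\heat_s f\cdot\nabla g\,\dd\mass\Big|
\le\int e^{-Ks}\,(\heat_s^*|\DIFF f|)\,|\nabla g|\,\dd\mass
=e^{-Ks}\int\heat_s(\overline{|\nabla g|})\,\dd|\DIFF f|,
\]
where $\overline{|\nabla g|}$ is the quasi-continuous representative. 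Letting $s\searrow 0$ and using that $\heat_s(\overline{|\nabla g|})\to\overline{|\nabla g|}$ pointwise $|\DIFF f|$-a.e. (this is where absolute continuity $|\DIFF f|\ll\capa$ together with the maximum principle \eqref{eq:maximum_principle} is used, to transfer the convergence from $\mass$-a.e. to $|\DIFF f|$-a.e.), one obtains
\[
\Big|\int f\Delta g\,\dd\mass\Big|\le\int\overline{|\nabla g|}\,\dd|\DIFF f|=\int|\bar\nabla g|\,\dd|\DIFF f|,
\]
the last equality by property ii) of the capacitary tangent module. The general $f\in\BV(\XX)$ case follows by truncating $f$ at levels $\pm M$ and using that $|\DIFF(f\wedge M\vee(-M))|\le|\DIFF f|$ together with the fact that $f\wedge M\vee(-M)\to f$ in $L^1$ and in total variation. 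This inequality shows exactly that $L$ is well defined (if $\sum_n\nchi_{E_n}\pi_{|\DIFF f|}(\bar\nabla g_n)=0$ in $L^\infty_f(T\XX)$ then the right-hand side vanishes, forcing $L=0$ there) and bounded by the $L^1(|\DIFF f|)$-norm of the pointwise norm of the argument.

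Finally, to upgrade $|\nu_f|\le 1$ to $|\nu_f|=1$ $|\DIFF f|$-a.e., I would argue by contradiction: if $|\nu_f|<1$ on a set $A$ with $|\DIFF f|(A)>0$, then one can produce test functions $g$ whose gradients approximate $\nchi_A\nu_f/|\nu_f|$ in the appropriate sense to derive $|\DIFF f|(A)=\int_A 1\,\dd|\DIFF f|\le\int_A|\nu_f|\,\dd|\DIFF f|<|\DIFF f|(A)$, a contradiction; here one uses the density statement iii) for $L^\infty_\capa(T\XX)$ and the identity $|\DIFF f|(\XX)=\sup\{-\int f\Delta g\,\dd\mass : g\in\TestF(\XX),\ \overline{|\nabla g|}\le 1\ \capa\text{-a.e.}\}$, which itself follows from the total variation being representable via test functions (a standard fact on $\RCD$ spaces). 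Uniqueness of $\nu_f$ is immediate from the density of $\{\pi_{|\DIFF f|}(\bar\nabla g): g\in\TestF(\XX)\}$-generated elements in $L^\infty_f(T\XX)$ and the non-degeneracy of the pointwise scalar product. The hard part throughout is the careful passage to the limit as $s\searrow 0$ at the level of the singular measure $|\DIFF f|$, for which the curvature-dependent contraction estimate \eqref{eq:Bakry-Emery} and the capacitary framework are both essential.
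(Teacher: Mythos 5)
The paper does not prove Theorem~\ref{Gauss-Green}; it is quoted as a black box, with the construction attributed to \cite{BG22} after \cite{bru2019rectifiability} (and an alternative route in \cite{BGlvm}). So there is no ``paper's own proof'' to compare against; what follows is an assessment of your sketch on its own merits, measured against the strategy actually carried out in those references.

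Your overall plan (define a functional on gradients of test functions, bound it using self-adjointness, commutation, and the $1$--Bakry--\'Emery estimate \eqref{eq:Bakry-Emery}, then invoke a Riesz/duality step in the capacitary module and finally upgrade $|\nu_f|\le 1$ to $|\nu_f|=1$) is in the same spirit as \cite{BG22}. The central estimate $\big|\int f\Delta g\,\dd\mass\big|\le\int\overline{|\nabla g|}\,\dd|\DIFF f|$ is indeed the crux, and your derivation via $\int f\,\heat_s\Delta g\,\dd\mass=-\int\nabla\heat_s f\cdot\nabla g\,\dd\mass$ together with \eqref{eq:Bakry-Emery} is the right mechanism.

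There are, however, genuine gaps. First, the extension of $L$ to simple elements is ill-posed as written: you set $L\big(\sum_n\nchi_{E_n}\pi_{|\DIFF f|}(\bar\nabla g_n)\big)\coloneqq-\sum_n\int_{E_n}f\Delta g_n\,\dd\mass$, but on the left the cut-offs $\nchi_{E_n}$ act in $L^\infty(|\DIFF f|)$, while on the right $\int_{E_n}$ is taken against $\mass$. Since $|\DIFF f|$ is typically singular with respect to $\mass$, a set $E$ with $|\DIFF f|(E)=0$ but $\mass(E)>0$ gives $\nchi_E\pi_{|\DIFF f|}(\bar\nabla g)=0$ in the module and yet $\int_E f\Delta g\,\dd\mass$ need not vanish. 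So $L$ is not well defined on simple elements by this recipe. The correct route is to show the scalar bound first and then apply Riesz's theorem in the Hilbert module $L^2_{|\DIFF f|}(T\XX)$ (using that $|\DIFF f|$ is finite, so $|\int f\Delta g\,\dd\mass|\le\|\bar\nabla g\|_{L^2(|\DIFF f|)}|\DIFF f|(\XX)^{1/2}$), and only afterwards read off $|\nu_f|\le 1$ from the sharper $L^1$-bound; no hand-made extension to simple elements is needed. Second, the passage $\heat_s(\overline{|\nabla g|})\to\overline{|\nabla g|}$ in $L^1(|\DIFF f|)$ is not a consequence of the maximum principle; one needs that $W^{1,2}$-convergence implies quasi-uniform (hence $\capa$-a.e.) convergence of quasi-continuous representatives along a subsequence, combined with $|\DIFF f|\ll\capa$, and then dominated convergence. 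Third, the upgrade to $|\nu_f|=1$ rests on the identity $|\DIFF f|(\XX)=\sup\{-\int f\Delta g\,\dd\mass:\ g\in{\rm Test}(\XX),\ \overline{|\nabla g|}\le 1\ \capa\text{-a.e.}\}$, which you treat as a ``standard fact'' but which is itself a nontrivial density/regularization lemma (precisely the place where the heat flow and the curvature hypothesis enter); as stated it needs its own proof. These are the same technical points that require real work in the sources cited by the paper, so your sketch correctly identifies where the difficulty lies but does not yet resolve it.
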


This Gauss--Green integration-by-parts formula was obtained in \cite{BG22} after \cite{bru2019rectifiability}. See also \cite{BGlvm} for a different proof.
In the case where \(f=\nchi_E\) for some set \(E\subseteq\XX\) of finite perimeter, we write \(\nu_E\) instead of \(\nu_{\nchi_E}\).
\section{Main part}
\subsection{Auxiliary results}
We now prove some results that will lead to the proof of our main Theorems \ref{main1} and \ref{main2}.
\begin{lem}\label{membership}
Let \((\XX,\dist,\mass)\) be an \(\RCD(K,N)\) space, for some \(K\in\RR\) and \(N\in[1,\infty)\). Then:
\begin{itemize}
\item[$\rm i)$] Given any $p\in(1,\infty)$ and $f\in L^p(\mass)$, it holds that
\[
\varliminf_{t\searrow 0}\frac{1}{t^{p/2}}\int\!\!\!\int p_t(x,y)|f(x)-f(y)|^p\,\dd\mass(x)\,\dd\mass(y)<+\infty
\quad\Longrightarrow\quad f\in W^{1,p}(\XX).
\]
\item[$\rm ii)$] Given any $f\in L^1(\mass)$, it holds that
\[
\varliminf_{t\searrow 0}\frac{1}{\sqrt t}\int\!\!\!\int p_t(x,y)|f(x)-f(y)|\,\dd\mass(x)\,\dd\mass(y)<+\infty
\quad\Longrightarrow\quad f\in\BV(\XX).
\]
\end{itemize}
\end{lem}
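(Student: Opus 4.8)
The plan is to bound the heat-kernel functional from below by a purely metric, Bourgain--Brezis--Mironescu type nonlocal functional, and then to appeal to membership criteria already available in the literature: \cite{GT21} in the Sobolev case i) and \cite{MMS16} in the BV case ii). The only quantitative input will be the Gaussian lower bound \eqref{eq:heat_kernel_ptwse_est} for $p_t$, together with the fact that a finite-dimensional $\RCD(K,N)$ space is a PI space.

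I would treat both items at once. Fix $p\in[1,\infty)$ (with $p\in(1,\infty)$ corresponding to i) and $p=1$ to ii)) and $f\in L^p(\mass)$. For $t\in(0,1]$, keeping in the double integral $\int\!\!\int p_t(x,y)|f(x)-f(y)|^p\,\dd\mass(x)\,\dd\mass(y)$ only the contribution of the pairs $(x,y)$ with $\dist(x,y)<\sqrt t$ --- which is legitimate because $p_t\geq 0$ --- and using the Gaussian lower bound \eqref{eq:heat_kernel_ptwse_est}, which gives $p_t(x,y)\geq c_0\,\mass(B_{\sqrt t}(x))^{-1}$ on that region with $c_0\defeq C_1^{-1}\exp(-\tfrac{1}{3}-C_2)>0$, one obtains, after writing $r\defeq\sqrt t$ so that $t^{p/2}=r^p$,
\[
\frac{1}{t^{p/2}}\int\!\!\int p_t(x,y)|f(x)-f(y)|^p\,\dd\mass(x)\,\dd\mass(y)\ \geq\ c_0\int\frac{1}{r^p}\fint_{B_r(x)}|f(x)-f(y)|^p\,\dd\mass(y)\,\dd\mass(x)
\]
for every $r\in(0,1]$. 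Hence, if the $\varliminf_{t\searrow 0}$ on the left is finite, then so is $\varliminf_{r\searrow 0}\int r^{-p}\fint_{B_r(x)}|f(x)-f(y)|^p\,\dd\mass(y)\,\dd\mass(x)$.

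It then remains only to invoke the known fact that, on a PI space, finiteness of this last $\varliminf$ forces $f\in W^{1,p}(\XX)$ when $p\in(1,\infty)$ and $f\in L^p(\mass)$, and $f\in\BV(\XX)$ when $p=1$ and $f\in L^1(\mass)$: these are precisely the criteria established in \cite{GT21} and in \cite{MMS16} (see also \cite{LaPiZh22}), and they apply here since finite-dimensional $\RCD(K,N)$ spaces are PI spaces. I do not expect a genuine obstacle here: once the reduction above is performed the conclusion is immediate. The only points deserving mild care are the bookkeeping of the Gaussian constants and the harmless restriction $t\leq 1$, together with --- in case the statements of \cite{GT21,MMS16} are formulated for a slightly different normalisation of the mollifying kernels --- an elementary comparison with $\int r^{-p}\fint_{B_r(x)}|f(x)-f(y)|^p\,\dd\mass(y)\,\dd\mass(x)$, obtained from the uniform local doubling property (which lets one interchange $\mass(B_r(x))$ and $\mass(B_r(y))$) and, if needed, Jensen's inequality to move the exponent in or out of the inner average.
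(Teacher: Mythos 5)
Your proposal follows essentially the same route as the paper's own proof: restrict to the near-diagonal region $\dist(x,y)<\sqrt t$, use the Gaussian lower bound \eqref{eq:heat_kernel_ptwse_est} to dominate the restricted Korevaar--Schoen type functional by a constant times the heat-kernel functional, and then invoke the membership criteria of \cite[Corollary 3.10]{GT21} for $p>1$ and \cite[Theorem 3.1]{MMS16} for $p=1$. The only cosmetic difference is that the paper writes the $p=1$ comparison directly against the symmetrized normalisation $\sqrt{\mass(B_{\sqrt t}(x))\mass(B_{\sqrt t}(y))}$ appearing in \cite{MMS16} (which the Gaussian lower bound and symmetry of $p_t$ give at once), whereas you keep $\mass(B_r(x))$ and observe that doubling bridges the gap; both are fine.
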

\begin{proof}
Given any radius \(r>0\), we define the near-diagonal set \(\Delta_r\subseteq\XX\times\XX\) as
\[
\Delta_r\coloneqq\big\{(x,y)\in\XX\times\XX\;\big|\;\dist(x,y)<r\big\}.
\]
\(\rm i)\) Consider for any \(p\in(1,\infty)\) the Korevaar--Schoen energy (for real-valued functions) in the sense of
Gigli--Tyulenev \cite{GT21}, namely for any function \(f\in L^p(\mass)\) we define
\[
{\sf E}_p(f)\coloneqq\varliminf_{r\searrow 0}\int\!\!\!\fint_{B_r(x)}\frac{|f(x)-f(y)|^p}{r^p}\,\dd\mass(y)\,\dd\mass(x).
\]
Observe that \(\int\!\!\fint_{B_r(x)}|f(x)-f(y)|^p\,\dd\mass(y)\,\dd\mass(x)=\int_{\Delta_r}\frac{|f(x)-f(y)|^p}{\mass(B_r(x))}\,\dd(\mass\otimes\mass)(x,y)\), so that
\[
{\sf E}_p(f)=\varliminf_{r\searrow 0}\frac{1}{r^p}\int_{\Delta_r}\frac{|f(x)-f(y)|^p}{\mass(B_r(x))}\,\dd(\mass\otimes\mass)(x,y).
\]
We thus know from \cite[Corollary 3.10]{GT21} that for any \(f\in L^p(\mass)\) it holds that
\begin{equation}\label{GigTyu}
\varliminf_{r\searrow 0}\frac{1}{r^p}\int_{\Delta_r}\frac{|f(x)-f(y)|^p}{\mass(B_r(x))}\,\dd(\mass\otimes\mass)(x,y)<+\infty
\quad\Longrightarrow\quad f\in W^{1,p}(\XX).
\end{equation}
The lower bound in \eqref{eq:heat_kernel_ptwse_est} implies that for any \(t>0\) and \(x,y\in\XX\) it holds that
\[
\frac{\nchi_{\Delta_{\sqrt t}}(x,y)}{\mass(B_{\sqrt t}(x))}\leq C_1 e^{C_2 t+\frac{1}{3}}p_t(x,y).
\]
Therefore, i) follows from \eqref{GigTyu}.\\
{\rm ii)} Since \((\XX,\dist,\mass)\) is a PI space, we know from \cite[Theorem 3.1]{MMS16} that for any \(f\in L^1(\mass)\)
\begin{equation}\label{MarMirShan}
\varliminf_{r\searrow 0}\frac{1}{r}\int_{\Delta_r}\frac{|f(x)-f(y)|}{\sqrt{\mass(B_r(x))}\sqrt{\mass(B_r(y)}}\,\dd(\mass\otimes\mass)(x,y)<+\infty
\quad\Longrightarrow\quad f\in\BV(\XX).
\end{equation}
Notice also that the lower bound in \eqref{eq:heat_kernel_ptwse_est} implies that for any \(t>0\) and \(x,y\in\XX\) it holds
\[
\frac{\nchi_{\Delta_{\sqrt t}}(x,y)}{\sqrt{\mass(B_{\sqrt t}(x))}\sqrt{\mass(B_{\sqrt t}(y))}}\leq C_1 e^{C_2 t+\frac{1}{3}}p_t(x,y).
\]
Therefore, ii) follows from \eqref{MarMirShan}.
\end{proof}
Next we treat separately the cases \(p\in(1,\infty)\) and \(p=1\). 
We start with the case $p\in (1,\infty)$. The two following lemmas are inspired respectively by \cite[Lemma 3.4 and Proposition 3.3]{Gor}.
The proof is similar, but some care has to be taken due to the change of the sequence of mollifiers involved.
\begin{lem}\label{Gorny1}
Let $(\XX,\dist,\mass)$ be an $\RCD(K,N)$ space and let $p\in(1,\infty)$. Let $f\in { W}^{1,p}(\XX)$ be given.
Then for any $t\in (0,1)$ it holds that
\begin{equation*}
    \frac{1}{t^{p/2}}\int\!\!\!\int p_t(x,y)|f(x)-f(y)|^p\,\dd\mass(x)\,\dd\mass(y)\le C\|f\|_{W^{1,p}(\XX)}^p,
\end{equation*}
where $C>0$ is a constant that depends only on \(K\), \(N\), \(p\).
\end{lem}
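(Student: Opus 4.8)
The goal is an $L^p$-bound on the nonlocal heat-kernel functional by the Sobolev norm, uniform in $t\in(0,1)$. I would exploit the Haj\l asz-gradient estimate (Proposition \ref{prop:Hajlasz}) together with the Gaussian upper bound and the Gaussian-tail integral estimate for the heat kernel (Proposition \ref{prop:properties_heat_kernel}), in the spirit of \cite[Lemma 3.4]{Gor} but adapting the exponential weights to the present choice of mollifier $p_t(x,y)$.

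First I would fix $R:=1$ and invoke Proposition \ref{prop:Hajlasz} to get a Haj\l asz upper gradient $g=g_1\in L^p(\mass)$ of $f$ at scale $1$, with $\|g\|_{L^p(\mass)}\le C\|f\|_{W^{1,p}(\XX)}$, and a $\mass$-null Borel set $N$ outside of which $|f(x)-f(y)|\le(g(x)+g(y))\dist(x,y)$ whenever $\dist(x,y)\le 1$. I would then split the double integral over $\Delta_1=\{\dist(x,y)<1\}$ and its complement. On $\Delta_1$ one estimates $|f(x)-f(y)|^p\le 2^{p-1}(g(x)^p+g(y)^p)\dist(x,y)^p$; on the complement one uses instead $|f(x)-f(y)|^p\le 2^{p-1}(|f(x)|^p+|f(y)|^p)$ and the fact that for $\dist(x,y)\ge 1$ and $t\in(0,1)$ the Gaussian factor $\exp\{-\dist(x,y)^2/(5t)\}$ is tiny, so that, after integrating in $y$ and using $\mass(B_{\sqrt t}(x))\ge$ (lower Gaussian bound rearranged) together with \eqref{eq:heat_kernel_int_est}, the far-diagonal term is bounded by $C\,t^{-p/2}e^{-c/t}\|f\|_{L^p(\mass)}^p\le C\|f\|_{L^p(\mass)}^p$.

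For the near-diagonal term, after using symmetry of $p_t$ and of $\Delta_1$ it suffices to bound
\[
\frac{2^{p-1}}{t^{p/2}}\int g(x)^p\Big(\int_{B_1(x)}p_t(x,y)\dist(x,y)^p\,\dd\mass(y)\Big)\dd\mass(x).
\]
Here the inner integral is controlled, via the upper Gaussian bound \eqref{eq:heat_kernel_ptwse_est} and a standard layer-cake / annular decomposition $B_1(x)\setminus B_{\sqrt t}(x)=\bigcup_k (B_{2^{k+1}\sqrt t}(x)\setminus B_{2^k\sqrt t}(x))$, together with the local doubling property to compare $\mass(B_{2^k\sqrt t}(x))$ with $\mass(B_{\sqrt t}(x))$, by $C\,t^{p/2}$ with $C=C(K,N,p)$; the Gaussian decay $e^{-4^k/5}$ beats the polynomial-times-doubling growth $2^{kp}\cdot C_D^k$ in the sum over $k$. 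This yields the near-diagonal contribution $\le C\|g\|_{L^p(\mass)}^p\le C\|f\|_{W^{1,p}(\XX)}^p$, and combining the two pieces gives the claim.

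\textbf{Main obstacle.} The delicate point is making the annular estimate of the inner integral genuinely uniform in $t\in(0,1)$ and in $x\in\XX$: one must carefully track the interplay between the Gaussian weight (with the constant $C_2 t$ in the exponent, harmless since $t<1$), the factor $\mass(B_{\sqrt t}(x))^{-1}$ in the upper bound, and the doubling constant $C_D=C_D(1)$ governing $\mass(B_{2^k\sqrt t}(x))/\mass(B_{\sqrt t}(x))\le C_D^{k+1}$, and check that the resulting geometric-type series $\sum_k 2^{kp}C_D^{k+1}e^{-4^k/5}$ converges to a constant depending only on $K,N,p$. The far-diagonal term is comparatively routine once one rewrites the lower Gaussian bound as an upper bound on $\mass(B_{\sqrt t}(x))^{-1}$ and applies \eqref{eq:heat_kernel_int_est}.
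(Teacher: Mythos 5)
Your proof is correct and shares the paper's skeleton — Haj\l asz upper gradient at scale $1$, then a near/far split along $\{\dist(x,y)\le 1\}$ — but the two resulting integrals are estimated by a genuinely different device. The paper's key step is the pointwise inequality \eqref{eqcomoda},
\[
p_{t^2}(x,y)\,\frac{\dist(x,y)^p}{t^p}\;\le\;C\,p_{4t^2}(x,y),
\]
obtained by combining the upper Gaussian bound for $p_{t^2}$, the lower Gaussian bound for $p_{4t^2}$, doubling, and boundedness of $\alpha\mapsto e^{-7\alpha^2/60}\alpha^p$; both the near-diagonal term (after inserting Haj\l asz) and the far-diagonal term (using $\dist\ge1\ge t$) are then reduced to integrating a bounded $L^1$-function against the probability density $p_{4t^2}$. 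You instead handle the near-diagonal piece by an explicit dyadic annular decomposition of $B_1(x)$ at scales $2^k\sqrt t$, comparing against the convergent series $\sum_k 2^{kp}C_D^{k+1}e^{-4^k/5}$, and the far-diagonal piece via the tail estimate \eqref{eq:heat_kernel_int_est} with $\alpha=1/\sqrt t$, exploiting that $t^{-p/2}e^{-1/(24t)}$ is bounded on $(0,1)$. Both routes use the same ingredients — Gaussian bounds, local doubling, $t<1$; the paper's scale-shift $t^2\to 4t^2$ packages the whole estimate into a single clean pointwise inequality and avoids any dyadic sum, at the cost of a slightly delicate choice of exponents, whereas your annular computation is more elementary and transparent. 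The only point worth making explicit in your version is that the annuli you actually need are those with $2^k\sqrt t<1$ (so they lie in $B_1(x)$), so that doubling is invoked at radii $\le 2$, where the $\RCD(K,N)$ doubling constant is controlled by $K,N$; with this, your final constant depends only on $K,N,p$, as required.
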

\begin{proof}
Using twice the Gaussian estimates of Proposition \ref{prop:properties_heat_kernel}, the doubling property of the measure,
and the boundedness on $[0,+\infty)$ of the function $\alpha\mapsto e^{-7\alpha^2/60}\alpha^p$, we obtain that
\begin{equation}\label{eqcomoda}
    \begin{split}
p_{t^2}(x,y)\frac{\dist(x,y)^p}{t^p}&\le C\frac{1}{\mass(B_t(x))} e^{-\frac{\dist(x,y)^2}{5 t^2}}\frac{\dist(x,y)^p}{t^p}\\&= C\frac{\mass(B_{2t}(x))}{\mass(B_t(x))}\bigg(\frac{1}{\mass(B_{2t}(x))}e^{-\frac{\dist(x,y)^2}{3(2t)^2}}\bigg)
\bigg(e^{-\frac{7\dist(x,y)^2}{60 t^2}}\frac{\dist(x,y)^p}{t^p}\bigg)\\&\le C p_{4 t^2}(x,y),
    \end{split}
\end{equation}
where the constant $C>0$ may vary from line to line and depends exclusively on $K$, $N$, $p$.

Thanks to Proposition \ref{prop:Hajlasz}, there exists a Haj\l asz upper gradient $g\in L^p(\mass)$ of the function $f$ at scale $1$
such that \(\|g\|_{L^p(\mass)}\leq C\|f\|_{W^{1,p}(\XX)}\). Then
\begin{align*}
    &\frac{1}{t^{p}}\int_{\{(x,y)\in\XX^2\,:\,\dist(x,y)\le 1\}} p_{t^2}(x,y)|f(x)-f(y)|^p\,\dd(\mass\otimes\mass)(x,y)\\
    \overset{\phantom{\eqref{eqcomoda}}}=\,&\int_{\{(x,y)\in\XX^2\,:\,\dist(x,y)\le 1\}} p_{t^2}(x,y)\frac{\dist(x,y)^p}{t^p}\frac{|f(x)-f(y)|^p}{\dist(x,y)^p}\,\dd(\mass\otimes\mass)(x,y)\\
    \overset{\eqref{eqcomoda}}\le\,& C\int\!\!\!\int p_{4 t^2}(x,y) (g(x)^p+g(y)^p)\,\dd\mass(x)\,\dd\mass(y)\\
    \overset{\phantom{\eqref{eqcomoda}}}=\,& 2C\int\heat_{4t^2}(g^p)\,\dd\mass\leq
    C\Vert g\Vert_{L^p(\mass)}^p\le C\Vert f\Vert_{W^{1,p}(\XX)}^p.
\end{align*}
Also, we can estimate
\begin{align*}
    &\frac{1}{t^{p}}\int_{\{(x,y)\in\XX^2\,:\,\dist(x,y)>1\}}p_{t^2}(x,y)|f(x)-f(y)|^p\,\dd(\mass\otimes\mass)(x,y)\\
    \overset{\phantom{\eqref{eqcomoda}}}\leq\,&\int_{\{(x,y)\in\XX^2\,:\,\dist(x,y)>1\}}
    p_{t^2}(x,y)\frac{\dist(x,y)^p}{t^p}{|f(x)-f(y)|^p}\,\dd(\mass\otimes\mass)(x,y)\\
    \overset{\eqref{eqcomoda}}\le\,& C \int\!\!\!\int p_{4 t^2}(x,y)(|f(x)|^p+|f(y)|^p)\,\dd\mass(x)\,\dd\mass(y)\\
    \overset{\phantom{\eqref{eqcomoda}}}=\,& 2C\int\heat_{4t^2}(|f|^p)\,\dd\mass\leq C\Vert f\Vert_{L^p(\mass)}^p
    \le C\Vert f\Vert_{W^{1,p}(\XX)}^p,
\end{align*}
and the combination of the two chains of inequalities above concludes the proof.
\end{proof}
\begin{lem}\label{Gorny2}
Let $(\XX,\dist,\mass)$ be an $\RCD(K,N)$ space, \(p\in(1,\infty)\), and $f\in\LIP_{\rm bs}(\XX)$. Then
\begin{equation*}
\frac{1}{t^{p/2}}\int p_t (x,y)|f(y)-f(x)|^p\,\dd\mass(y)=\frac{2^p}{\sqrt{\pi}}\Gamma\bigg(\frac{p+1}{2}\bigg)\lip (f)(x)^p,
\quad\text{ for every }x\in\mathcal R_n. 
\end{equation*}
\end{lem}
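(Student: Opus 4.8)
The plan is to perform a blow-up of the left-hand side at $x\in\mathcal R_n$, reducing it to an explicit Gaussian integral on the tangent $\RR^n$. Write $t=r^2$ with $r\searrow 0$ and let $\bar p^{(r)}$ be the heat kernel of the rescaled $\RCD(r^2K,N)$ space $(\XX,r^{-1}\dist,\mass_r^x)$. A standard computation with the scaling behaviour of the Cheeger energy and of the heat semigroup gives $\bar p^{(r)}_1(x,y)=\mass(B_r(x))\,p_{r^2}(x,y)$, whence, setting $g_r\coloneqq(f-f(x))/r$,
\[
\frac{1}{t^{p/2}}\int p_t(x,y)|f(y)-f(x)|^p\,\dd\mass(y)=\int\bar p^{(r)}_1(x,y)\,|g_r(y)|^p\,\dd\mass_r^x(y),\qquad t=r^2.
\]
Observe that $g_r$ is $\Lip(f)$-Lipschitz with respect to $r^{-1}\dist$ and $g_r(x)=0$.

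The second step is to pass to the limit $r\searrow 0$ in the right-hand side. Since $x\in\mathcal R_n$ the tangent is unique, so $(\XX,r^{-1}\dist,\mass_r^x,x)\to(\RR^n,\dist_e,\tilde{\mathcal L}^n,0)$ in the pmGH sense as $r\searrow 0$; realizing all these spaces isometrically in a fixed proper space $\ZZ$, one also has $\mass_r^x\rightharpoonup\tilde{\mathcal L}^n$. I would then combine three ingredients. \emph{(a) Heat kernel stability:} $\bar p^{(r)}_1(x,\cdot)\to\bar p_1(0,\cdot)$ locally uniformly on $\ZZ$, where $\bar p_1(0,z)=\omega_n(4\pi)^{-n/2}e^{-|z|^2/4}$ is the time-$1$ heat kernel of $(\RR^n,\dist_e,\tilde{\mathcal L}^n)$; this is where the curvature bound is used, via the Mosco convergence of the Cheeger energies \cite{GMS15}, upgraded to local uniform convergence of the kernels by the uniform Gaussian and Hölder estimates of Proposition \ref{prop:properties_heat_kernel}. \emph{(b) Convergence of the increments:} the equi-Lipschitz, locally equibounded family $\{g_r\}$ converges, uniformly on bounded subsets of $\ZZ$, to the blow-up of $f$ at $x$, which at every differentiability point of $f$ (hence, by the first-order differentiation theory on $\RCD$/PI spaces, for $\mass$-a.e.\ $x\in\mathcal R_n$) is a linear map $L(z)=a\cdot z$ with Euclidean norm $|a|=\lip(f)(x)$; here we also invoke $|\dd_p f|=\lip(f)$ from \eqref{eq:|Df|=lipf}. \emph{(c) Tail estimate:} arguing as in the proof of Lemma \ref{Gorny1} (using $|g_r(y)|\le\Lip(f)\,r^{-1}\dist(x,y)$ for $\dist(x,y)\le1$, the crude bound $|g_r(y)|\le 2\|f\|_{L^\infty(\mass)}/r$ together with \eqref{eq:heat_kernel_int_est} for $\dist(x,y)>1$, the Gaussian upper bound, and the doubling property), the contribution of $\{y:\dist(x,y)>Rr\}$ is at most $\varepsilon_R$, with $\varepsilon_R$ independent of small $r$ and $\varepsilon_R\to0$ as $R\to\infty$. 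Combining (a)--(c) with the weak convergence of the measures on $B_{Rr}(x)$ (which in the rescaled space is the fixed ball $B_R(0)$) and letting $R\to\infty$, one concludes that the limit in question exists and equals $\int_{\RR^n}\bar p_1(0,z)\,|a\cdot z|^p\,\dd\tilde{\mathcal L}^n(z)$.

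It remains to evaluate this Gaussian integral. Using $\dd\tilde{\mathcal L}^n=\dd z/\omega_n$, rotating so that $a$ points along the first axis, and applying Fubini's theorem,
\[
\int_{\RR^n}\bar p_1(0,z)\,|a\cdot z|^p\,\dd\tilde{\mathcal L}^n(z)=(4\pi)^{-n/2}|a|^p\left(\int_\RR e^{-s^2/4}|s|^p\,\dd s\right)\left(\int_\RR e^{-s^2/4}\,\dd s\right)^{n-1}.
\]
The substitutions $s=2u$ and $s^2=4u$ give $\int_\RR e^{-s^2/4}\,\dd s=2\sqrt\pi$ and $\int_\RR e^{-s^2/4}|s|^p\,\dd s=2^{p+1}\Gamma\big(\tfrac{p+1}{2}\big)$, so the expression above equals $(4\pi)^{-n/2}\,2^{p+1}(2\sqrt\pi)^{n-1}\Gamma\big(\tfrac{p+1}{2}\big)|a|^p$; simplifying the powers of $2$ and $\pi$ yields exactly $\frac{2^p}{\sqrt\pi}\Gamma\big(\frac{p+1}{2}\big)\lip(f)(x)^p$.

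The main difficulty is ingredient (a): establishing, with enough uniformity in $r$, the local uniform convergence $\bar p^{(r)}_1\to\bar p_1$ and the consequent passage to the limit under the integral sign — this is precisely the step in which the lower Ricci bound is essential (through Mosco convergence and the Gaussian estimates). The linearity of the blow-up in (b) is the other delicate point, but it follows from the known differentiation theory and holds for $\mass$-a.e.\ $x\in\mathcal R_n$, which suffices for the applications of the lemma.
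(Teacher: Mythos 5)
Your argument reproduces the paper's blow-up strategy in outline: rescale so that the left-hand side becomes $\int\bar p^{(r)}_1(x,y)|g_r(y)|^p\,\dd\mass_r^x(y)$, pass to the Euclidean tangent cone along a subsequence, and compute the resulting Gaussian integral. Your Gaussian computation and tail estimate (c) agree with the paper's. Two points merit comment.

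For the passage to the limit (your ingredient (a)), you invoke local uniform convergence of the rescaled heat kernels, a true but non-trivial stability statement that would need to be assembled from Mosco convergence together with uniform Gaussian and H\"older estimates. The paper avoids this heavier input by relying only on the one-sided semicontinuity of \cite[Lemma 4.11]{ambrosio2018rigidity} (a lower semicontinuity of $\heat_1^i(\cdot)(x)$ under weak convergence of measures), applied first to $\nchi_{B_R^i(x)}|f_i|^p$ and then, using mass preservation of the heat flow, to the complementary nonnegative function $(LR)^p-\nchi_{B_R^i(x)}|f_i|^p$; this gives the two-sided convergence \eqref{eq:integr_limit_aux3aa} with less machinery. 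Both routes reach the same limit.

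For ingredient (b), you correctly restrict to Cheeger differentiability points of $f$ -- hence to $\mass$-a.e.\ $x$ -- since only there does the blow-up of $f$ become a linear map with Euclidean norm $\lip(f)(x)$. This restriction is in fact necessary: if $\XX=\RR^2$ and $f$ coincides with $\max(z_1,0)$ near the origin, then $0\in\mathcal R_2$ and $\lip(f)(0)=1$, yet the blow-up is $\max(z_1,0)$ and the left-hand side evaluates to $\tfrac{1}{2}\cdot\tfrac{2^p}{\sqrt\pi}\Gamma\big(\tfrac{p+1}{2}\big)$, so the displayed identity fails at that point. Thus the conclusion ``for every $x\in\mathcal R_n$'' should really be read as ``for $\mass$-a.e.\ $x$'' (with the exceptional null set depending on $f$), which is what the cited results from \cite{Gor} and \cite{Cheeger00} actually give, and which is all that is used when the lemma is integrated against $\mass$ in the proof of Theorem \ref{main1}. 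A minor slip: the display in the statement should carry a $\lim_{t\searrow 0}$ on its left-hand side.
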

\begin{proof}
Fix $x\in\mathcal R_n$. We can assume with no loss of generality that $f(x)=0$, so that we have to show that given any sequence $\{t_i\}_i$
with $t_i\searrow 0$ we have, up to not relabeled subsequences,
 \begin{equation*}
\int p_{t_i^2} (x,y)\bigg(\frac{|f(y)|}{t_i}\bigg)^p\,\dd\mass(y)\to\frac{2^p}{\sqrt{\pi}}\Gamma\bigg(\frac{p+1}{2}\bigg) \lip (f)(x)^p.
\end{equation*}
Up to a not relabeled subsequence, we have
that \((\XX,t_i^{-1}\dist,\mass_{t_i}^x,x,E)\to(\RR^n,\dist_e,\tilde{\mathcal L}^n,0,H)\) as \(i\to\infty\) in the sense
of Definition \ref{def:pmGH}, where we set \(H\coloneqq\big\{(z_1,\ldots,z_n)\in\RR^n\,:\,z_n>0\big\}\), with
\(n\in\NN\) being the essential dimension of \((\XX,\dist,\mass)\). This follows as in the first part of the proof of \cite[Corollary 4.10]{ambrosio2018rigidity}, exploiting \cite[Corollary 3.4]{ambrosio2018rigidity}.
Let \((\ZZ,\dist_\ZZ)\) be a proper metric space
where \((\XX,t_i^{-1}\dist,\mass_{t_i}^x,x,E)\to(\RR^n,\dist_e,\tilde{\mathcal L}^n,0,H)\) is realized.
We denote by \(\#^i\) and \(\tilde\#\) the various objects in the rescaled space \((\XX,t_i^{-1}\dist,\mass_{t_i}^x,x)\)
and in the limit space \((\RR^n,\dist_e,\tilde{\mathcal L}^n,0)\), respectively. Recall that \(p_1^i(x,y)=p_{t_i^2}(x,y)\mass(B_{t_i}(x))\).
Hence we have to prove that 
 \begin{equation*}
\int p^{i}_1 (x,y)|f_i(y)|^p\,\dd\mass_{t_i}^x(y)\to\frac{2^p}{\sqrt{\pi}}\Gamma\bigg(\frac{p+1}{2}\bigg) \lip (f)(x)^p,
\end{equation*}
where $f_i=f/t_i$ in the rescaled space. 

Let $R>0$ and notice that, using \eqref{eqcomoda} in the rescaled space, we have
\begin{equation}\label{csam}
\begin{split}
        \int_{\XX\setminus B_R(x)} p_1^i(x,y) |f_i(y)|^p\,\dd\mass_{t_i}^x(y)&\le \Vert f\Vert_{L^\infty(\mass)}^p\frac{1}{R^p}\int p_1^i(x,y)\frac{\dist(x,y)^p}{t_i^{p}}\,\dd\mass_{t_i}^x(y)\\&\le C \Vert f\Vert_{L^\infty(\mass)}^p\frac{1}{R^p} \int p_4^i(x,y)\,\dd\mass_{t_i}^x(y).
\end{split}
\end{equation}

We can use the Ascoli--Arzelà theorem and take a (non-relabeled) subsequence such that $f_i$
converge to $g$ locally uniformly, for some limit $g\in\LIP(\RR^n)$. As in \cite[Proposition 3.3]{Gor}, building upon \cite{Cheeger00},
\begin{equation}\label{eq:g_limit}
g(z)=\lip(f)(x)(z\cdot v),\quad\text{ for some }v\in\mathbb S^{n-1}.
\end{equation}
Therefore, $|f_i|^p$ converge locally uniformly to $|g|^p$. Using the theory of pmGH convergence (in particular,
exploiting also the doubling property of the measures involved), we see that $\nchi_{B^i_R(x)}|f_i|^p\mass_{t_i}^x$
converges to $\nchi_{\tilde B_R(0)}|g|^p\tilde\LL^n$ in duality with $C_{\rm bs}(\ZZ)$. Now we show that
\begin{equation}\label{eq:integr_limit_aux3aa}
    \heat_1^i\big(\nchi_{B_R^i(x)}|f_i|^p\big)(x)\rightarrow \tilde\heat_1\big(\nchi_{\tilde B_R(0)}|g|^p\big)(0).
\end{equation}
The claim follows as in the first part of the proof of \cite[Proposition 4.12]{ambrosio2018rigidity}, we sketch the argument.
First, by \cite[Lemma 4.11]{ambrosio2018rigidity} it holds that
\begin{equation}\notag
\liminf_{i\to\infty}\heat_1^i\big(\nchi_{B_R^i(x)}|f_i|^p\big)(x)\ge
\tilde\heat_1\big(\nchi_{\tilde B_R(0)}|g|^p\big)(0).
\end{equation}
Now, if $L$ denotes the Lipschitz constant of $f$ (recall that we are assuming $f(x)=0$), by the fact that
$\mass_{t_i}^x\rightharpoonup\tilde\LL^n$ in duality with \(C_{\rm bs}(\ZZ)\) we have that, in duality with \(C_{\rm bs}(\ZZ)\),
$$
\bigg(L R-\nchi_{B_R^i(x)}|f_i|^p\bigg)\mass_{t_i}^x\rightharpoonup \bigg(L R-\nchi_{\tilde B_R(0)}|g|^p\bigg)\tilde\LL^n
$$
and that the measures involved above are non-negative. As above, we infer that
\begin{equation}\notag
\liminf_{i\to\infty}\heat_1^i\bigg(L R-\nchi_{B_R^i(x)}|f_i|^p\bigg)(x)\ge
\tilde\heat_1\bigg(L R-\nchi_{\tilde B_R(0)}|g|^p\bigg)(0),
\end{equation}
thus accordingly
\begin{equation}\notag
\limsup_{i\to\infty}\heat_1^i\big(\nchi_{B_R^i(x)}|f_i|^p\big)(x)\le
\tilde\heat_1\big(\nchi_{\tilde B_R(0)}|g|^p\big)(0),
\end{equation}
so that \eqref{eq:integr_limit_aux3aa} is proved.

Using \eqref{csam} and \eqref{eq:integr_limit_aux3aa}, letting then $R\nearrow+\infty$, we have that
$$
\int p^{i}_1 (x,y)|f_i(y)|^p\,\dd\mass_{t_i}^x(y)\rightarrow\tilde\heat_1(|g|^p)(0).
$$
Hence we compute, using \eqref{eq:g_limit} for the first equality, a change-of-variable for the second one, and Fubini's theorem for the third one,
\begin{align*}
    \tilde\heat_1(|g|^p)(0)&=\lip(f)(x)^p\int_{\RR^n} \frac{1}{(4\pi)^{n/2}}e^{-\frac{|z|^2}{4}} |z\,\cdot\,v|^p\,\dd\LL^n(z)\\
    &=\lip(f)(x)^p\int_{\RR^n} \frac{1}{(4\pi)^{n/2}}e^{-\frac{|z|^2}{4}} |z_1|^p\,\dd\LL^n(z_1,\ldots,z_n)\\&=
    \lip(f)(x)^p\frac{1}{\sqrt{4\pi}}\int_\RR  |s|^pe^{-s^2/4}\,\dd\mathcal L^1(s)=
    \lip(f)(x)^p\frac{2^p}{\sqrt{\pi}}\Gamma\bigg(\frac{p+1}{2}\bigg),
\end{align*}
so that the proof is concluded.
\end{proof}

\bigskip

Now we pass to the case $p=1$, which is more difficult to handle than the previous one.
The following lemma is inspired by the proof of \cite[Theorem 2.13]{BMP}, obtained with a blow-up argument.
We will also rely on a blow-up argument, but facing the additional difficulty of the lack of a linear structure.
\begin{lem}\label{lem:integr_limit}
Let \((\XX,\dist,\mass)\) be an \(\RCD(K,N)\) space and let \(E\subseteq\XX\) be a set of finite perimeter. Then, for every $x\in\mathcal F E$
such that \eqref{zuppa1eq} and \eqref{zuppa2eq} hold at \(x\) (and thus for \(|\DIFF\nchi_E|\)-a.e.\ \(x\)),
\begin{equation}\label{eq:ptwse_limit}
\lim_{t\searrow 0}t\heat_{t^2}|\nabla\heat_{t^2}\nchi_E|(x)=\frac{1}{\sqrt{8\pi}}.
\end{equation}
In particular, it holds that
\begin{equation}\label{eq:integr_limit}
\lim_{t\searrow 0}t\int\heat_{t^2}|\nabla\heat_{t^2}\nchi_E|\,\dd|\DIFF\nchi_E|=\frac{1}{\sqrt{8\pi}}|\DIFF\nchi_E|(\XX).
\end{equation}
\end{lem}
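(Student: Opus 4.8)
The plan is to establish the pointwise limit \eqref{eq:ptwse_limit} at every point $x \in \mathcal{F}E$ where \eqref{zuppa1eq} and \eqref{zuppa2eq} hold, via a blow-up argument, and then deduce \eqref{eq:integr_limit} from it by dominated convergence. For the pointwise statement, fix such an $x$ and a sequence $t_i \searrow 0$; I would first pass (along a non-relabelled subsequence) to a pmGH-convergent rescaling in which $(\XX, t_i^{-1}\dist, \mass_{t_i}^x, x, E)$ converges to $(\RR^n, \dist_e, \tilde{\mathcal L}^n, 0, H)$ with $H = \{z_n > 0\}$ a halfspace, using that $x \in \mathcal{F}E$ together with the density/perimeter estimates, exactly as in the blow-up step of \cite{ambrosio2018rigidity} that was already invoked in the proof of Lemma~\ref{Gorny2}. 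Writing $\heat^i_t$ for the rescaled heat flow, the scaling of the heat kernel gives the identity $t_i \heat_{t_i^2} |\nabla \heat_{t_i^2} \nchi_E|(x) = \heat^i_1 |\nabla^i \heat^i_1 \nchi_E|(x)$, where $\nabla^i$ is the rescaled gradient; note the scaling-invariant cancellation of the $t_i$ factors (two half-powers of $t_i^2$, one from the extra $t_i$ and one from each gradient, balanced against the parabolic scaling).

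\smallskip

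\noindent The core is then to show $\heat^i_1 |\nabla^i \heat^i_1 \nchi_E|(x) \to \tilde\heat_1 |\tilde\nabla \tilde\heat_1 \nchi_H|(0)$ and to compute the right-hand side explicitly in $\RR^n$. For the convergence I would invoke the stability of the heat flow under pmGH convergence of $\RCD$ spaces (Mosco convergence of Cheeger energies, \cite{GMS15}), which gives convergence of $\heat^i_1 \nchi_E$ and, crucially, of the gradients: here the $1$-Bakry--Émery estimate \eqref{eq:Bakry-Emery} is what makes $|\nabla^i \heat^i_1 \nchi_E|$ a well-controlled $L^2$ (in fact bounded, by the maximum principle and \eqref{eq:Linfty_to_Lip}) family, so that one can pass to the limit in the integral against the Gaussian kernel $p^i_1(x, \cdot)$ — the same type of argument used in \cite[Proposition 4.12]{ambrosio2018rigidity} and already replicated in \eqref{eq:integr_limit_aux3aa}, now applied to the sequence $\nchi_{B^i_R(x)} |\nabla^i \heat^i_1 \nchi_E|$ with a tail estimate via \eqref{eq:heat_kernel_int_est} and the uniform $L^\infty$ bound to let $R \nearrow \infty$. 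In the flat model the computation is explicit: $\heat_s \nchi_{\{z_n > 0\}}(z)$ depends only on $z_n$, and $|\nabla (\heat_1 \nchi_H)|(z) = \partial_{z_n}(\heat_1 \nchi_H)(z_n) = \frac{1}{\sqrt{4\pi}} e^{-z_n^2/4}$; applying $\tilde\heat_1$ at $0$ and using Fubini on the one remaining coordinate yields $\frac{1}{\sqrt{4\pi}} \cdot \frac{1}{\sqrt{4\pi}} \int_\RR e^{-s^2/4}\,\dd s = \frac{1}{\sqrt{4\pi}}\cdot\frac{1}{\sqrt{4\pi}}\cdot\sqrt{4\pi} = \frac{1}{\sqrt{8\pi}}$ after tracking the constants. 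Since every subsequence has a further subsequence converging to the same limit $\frac{1}{\sqrt{8\pi}}$, the full limit \eqref{eq:ptwse_limit} follows.

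\smallskip

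\noindent For \eqref{eq:integr_limit} I would dominate the integrands $t \heat_{t^2}|\nabla\heat_{t^2}\nchi_E|$ uniformly in $t \in (0,1)$: the maximum principle \eqref{eq:maximum_principle_RCD} gives $\heat_{t^2} g \le \|g\|_{L^\infty}$, and by \eqref{eq:Linfty_to_Lip} applied to $\heat_{t^2/2}\nchi_E$ (semigroup property) one gets $\|\,t|\nabla\heat_{t^2}\nchi_E|\,\|_{L^\infty(\mass)} \le C$ for some $C = C(K)$, so the integrands are bounded by a constant, which is $|\DIFF\nchi_E|$-integrable since $|\DIFF\nchi_E|(\XX) < \infty$. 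Dominated convergence together with \eqref{eq:ptwse_limit}, valid $|\DIFF\nchi_E|$-a.e., then yields \eqref{eq:integr_limit}.

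\smallskip

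\noindent I expect the main obstacle to be the passage to the limit in $\heat^i_1 |\nabla^i \heat^i_1 \nchi_E|(x)$: unlike the scalar quantity $\heat^i_1 \nchi_E$, the object $|\nabla^i \heat^i_1 \nchi_E|$ involves the gradient in the (varying) $\RCD$ structure, and one must argue that it converges in the appropriate weak sense to $|\tilde\nabla \tilde\heat_1 \nchi_H|$ — this is where the curvature bound enters essentially, both through Mosco convergence of the Dirichlet forms and through the Bakry--Émery contraction that provides the uniform bounds needed to justify interchanging limit and integration (and to handle the far-field tails uniformly in $i$). The flat-space computation and the dominated-convergence wrap-up are then routine.
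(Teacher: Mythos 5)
Your proof follows the paper's argument essentially line for line: the same blow-up along $(\XX, t_i^{-1}\dist, \mass_{t_i}^x, x, E)\to(\RR^n,\dist_e,\tilde{\mathcal L}^n,0,H)$, the same scaling identity $t_i\heat_{t_i^2}|\nabla\heat_{t_i^2}\nchi_E|(x)=\heat_1^i|\nabla^i\heat_1^i\nchi_E|(x)$, the same invocation of \cite[Proposition 4.12]{ambrosio2018rigidity} together with the uniform $L^\infty$-bound from \eqref{eq:Linfty_to_Lip}, the same tail control via \eqref{eq:heat_kernel_int_est} to pass $R\nearrow\infty$, the same explicit Euclidean computation, and the same dominated-convergence step for \eqref{eq:integr_limit}. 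One small arithmetic slip in the Euclidean calculation: the integrand on the remaining coordinate should be the product of the two Gaussians, i.e.\ $\int_\RR e^{-s^2/2}\,\dd s=\sqrt{2\pi}$ rather than $\int_\RR e^{-s^2/4}\,\dd s=\sqrt{4\pi}$ (and in any case $\tfrac{1}{\sqrt{4\pi}}\cdot\tfrac{1}{\sqrt{4\pi}}\cdot\sqrt{4\pi}=\tfrac{1}{\sqrt{4\pi}}\neq\tfrac{1}{\sqrt{8\pi}}$); with $\tfrac{1}{4\pi}\sqrt{2\pi}=\tfrac{1}{\sqrt{8\pi}}$ the claimed constant is recovered, matching the paper's cleaner route $\heat_1^{\RR}p_1^{\RR}(\cdot,0)(0)=p_2^{\RR}(0,0)$.
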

\begin{proof}
Fix $x$ as in the statement and \((t_i)_i\subseteq(0,1)\) with \(t_i\searrow 0\). We use the same convergence of rescaled spaces as in Lemma \ref{Gorny2}, and we keep the same notation. Recall in addition that \(|\nabla^i\#|=t_i|\nabla\#|\), hence
\begin{equation}\label{eq:integr_limit_aux1}\begin{split}
t_i|\nabla\heat_{t_i^2}\nchi_E|(y)&=t_i\bigg|\nabla\int p_{t_i^2}(\cdot,z)\nchi_E(z)\,\dd\mass(z)\bigg|(y)
=\bigg|\nabla^i\int p_1^i(\cdot,z)\nchi_E(z)\,\dd\mass_{t_i}^x(z)\bigg|(y)\\
&=|\nabla^i\heat_1^i\nchi_E|(y),\quad\text{ for }\mass\text{-a.e.\ }y\in\XX.
\end{split}\end{equation}
Now observe that for any given radius \(R>0\) we can estimate
\begin{equation}\label{eq:integr_limit_aux2}\begin{split}
\int_{\XX\setminus B_R^i(x)}p_1^i(x,y)|\nabla^i\heat_1^i\nchi_E|(y)\,\dd\mass_{t_i}^x(y)\overset{\eqref{eq:Linfty_to_Lip}}\leq&
\frac{\|\heat_1^i\nchi_E\|_{L^\infty(\mass_{t_i}^x)}}{\sqrt 2 e^{K^-}}\int_{\XX\setminus B_R^i(x)}p_1^i(x,y)\,\dd\mass_{t_i}^x(y)\\
\overset{\eqref{eq:maximum_principle}}\leq&\frac{1}{\sqrt 2 e^{K^-}}\int_{\XX\setminus B_R^i(x)}p_1^i(x,y)\,\dd\mass_{t_i}^x(y)\\
\overset{\phantom{\eqref{eq:Linfty_to_Lip}}}=&\frac{1}{\sqrt 2 e^{K^-}}\int_{\XX\setminus B_{t_i R}(x)}p_{t_i^2}(x,y)\,\dd\mass(y)\\
\overset{\eqref{eq:heat_kernel_int_est}}\leq&\frac{C_3}{\sqrt 2 e^{K^-}}e^{-\frac{R^2}{24}}.
\end{split}\end{equation}

We also know from \cite[Proposition 4.12]{ambrosio2018rigidity} that
\(|\nabla^i\heat_1^i\nchi_E|\mass_{t_i}^x\rightharpoonup|\tilde\nabla\tilde\heat_1\nchi_H|\tilde{\mathcal L}^n\)
in duality with \(C_{\rm bs}(\ZZ)\). Given any \(R>0\), we also have that \(\nchi_{B_R^i(x)}|\nabla^i\heat_1^i\nchi_E|\mass_{t_i}^x
\rightharpoonup\nchi_{\tilde B_R(0)}|\tilde\nabla\tilde\heat_1\nchi_H|\tilde{\mathcal L}^n\) in duality with \(C_{\rm bs}(\ZZ)\). Also, by \eqref{eq:Linfty_to_Lip}, it holds that $|\nabla^i\heat_1^i\nchi_E|$,$|\tilde\nabla\tilde\heat_1\nchi_H|\le \frac{1}{\sqrt{2}e^{K^-}}$ almost everywhere, for every $i$.
Then, following the proof of \eqref{eq:integr_limit_aux3aa}, we can easily show that
\begin{equation}\label{eq:integr_limit_aux3}
\heat_1^i\big(\nchi_{B_R^i(x)}|\nabla^i\heat_1^i\nchi_E|\big)(x)\to
\tilde\heat_1\big(\nchi_{\tilde B_R(0)}|\tilde\nabla\tilde\heat_1\nchi_H|\big)(0),\quad\text{ as }i\to\infty.
\end{equation}
Notice that, as \(R\nearrow+\infty\), 
\begin{equation}\label{eq:integr_limit_aux4}
\tilde\heat_1\big(\nchi_{\tilde B_R(0)}|\tilde\nabla\tilde\heat_1\nchi_H|\big)(0)\to\tilde\heat_1\big(|\tilde\nabla\tilde\heat_1\nchi_H|\big)(0)
=\heat_1^{\RR^n}\big(|\nabla^{\RR^n}\heat_1^{\RR^n}\nchi_H|\big)(0)=\frac{1}{\sqrt{8\pi}},
\end{equation}
where the limit is due to monotone convergence and the last equality follows from the direct computation 
\begin{align*}
|\nabla^{\RR^n} \heat_1^{\RR^n} \nchi_H|&=\bigg|\nabla^{\RR^n} \int_{\{z_n>0\}}p_1^{\RR^n}(x,z)\,\dd\LL^n(z)\bigg|
=\bigg|\frac{\dd}{\dd x} \int_{\{z>0\}}p_1^{\RR}(x,z)\,\dd\LL^1(z)\bigg|\\
&=\bigg| \int_{\{z>0\}} \frac{\dd}{\dd x}p_1^{\RR}(x,z)\,\dd\LL^1(z)\bigg|
=\bigg|\int_{\{z>0\}} \frac{\dd}{\dd z}p_1^{\RR}(x,z)\,\dd\LL^1(z)\bigg|=p_1^{\RR}(x,0),
\end{align*}
which yields
$$
\heat_1^{\RR^n}\big(|\nabla^{\RR^n} \heat_1^{\RR^n}\nchi_H|\big)(0)=(\heat_1^{\RR} p_1^{\RR}(\,\cdot\,,0))(0)=p_2^\RR(0,0)=\frac{1}{\sqrt{8 \pi}}.
$$
Next, we can estimate
\[\begin{split}
&\bigg|\int p_1^i(x,y)|\nabla^i\heat_1^i\nchi_E|(y)\,\dd\mass_{t_i}^x(y)-\frac{1}{\sqrt{8\pi}}\bigg|\\
\overset{\eqref{eq:integr_limit_aux2}}\leq\,&\bigg|\int_{B_R^i(x)}p_1^i(x,y)|\nabla^i\heat_1^i\nchi_E|(y)\,\dd\mass_{t_i}^x(y)-\frac{1}{\sqrt{8\pi}}\bigg|
+\frac{C_3}{\sqrt 2 e^{K^-}}e^{-\frac{R^2}{24}}\\
\overset{\phantom{\eqref{eq:integr_limit_aux2}}}\leq\,&\bigg|\heat_1^i\big(\nchi_{B_R^i(x)}|\nabla^i\heat_1^i\nchi_E|\big)(x)-\frac{1}{\sqrt{8\pi}}\bigg|
+\frac{C_3}{\sqrt 2 e^{K^-}}e^{-\frac{R^2}{24}},
\end{split}\]
whence (by letting \(i\to\infty\)) it follows that
\[\begin{split}
&\limsup_{i\to\infty}\bigg|\int p_1^i(x,y)|\nabla^i\heat_1^i\nchi_E|(y)\,\dd\mass_{t_i}^x(y)-\frac{1}{\sqrt{8\pi}}\bigg|\\
\overset{\eqref{eq:integr_limit_aux3}}\leq\,&\bigg|\tilde\heat_1\big(\nchi_{\tilde B_R(0)}|\tilde\nabla\tilde\heat_1\nchi_H|\big)(0)
-\frac{1}{\sqrt{8\pi}}\bigg|+\frac{C_3}{\sqrt 2 e^{K^-}}e^{-\frac{R^2}{24}}.
\end{split}\]
Letting \(R\nearrow+\infty\) and exploiting \eqref{eq:integr_limit_aux4}, we deduce that
\(\int p_1^i(x,y)|\nabla^i\heat_1^i\nchi_E|(y)\,\dd\mass_{t_i}^x(y)\to\frac{1}{\sqrt{8\pi}}\) as \(i\to\infty\). Consequently, we can conclude that
\[
t_i\heat_{t_i^2}|\nabla\heat_{t_i^2}\nchi_E|(x)=t_i\int p_{t_i^2}(x,y)|\nabla\heat_{t_i^2}\nchi_E|(y)\,\dd\mass(y)
\overset{\eqref{eq:integr_limit_aux1}}=\int p_1^i(x,y)|\nabla^i\heat_1^i\nchi_E|(y)\,\dd\mass_{t_i}^x(y)
\]
converges to \(\frac{1}{\sqrt{8\pi}}\) as \(i\to\infty\). Since the sequence \(t_i\searrow 0\) was arbitrary, \eqref{eq:ptwse_limit} is finally proved.

Let us pass to the verification of \eqref{eq:integr_limit}.
Given any \(t>0\), we can estimate
\[
t\heat_{t^2}|\nabla\heat_{t^2}\nchi_E|(x)\overset{\eqref{eq:maximum_principle_RCD}}\leq t\||\nabla\heat_{t^2}\nchi_E|\|_{L^\infty(\mass)}
\overset{\eqref{eq:Linfty_to_Lip}}\leq\frac{1}{\sqrt 2 e^{K^-}},\quad\text{ for every }x\in\XX.
\]
Hence, using the dominated convergence theorem and taking \eqref{eq:ptwse_limit} and Proposition \ref{zuppa} into account, we get \eqref{eq:integr_limit}.
\end{proof}

The following lemma is one of the main results of the paper. In the case in which $E=F$, the conclusion comes from L'H\^{o}pital's rule and the blow-up computation of Lemma \ref{lem:integr_limit}. The computations with L'H\^{o}pital's rule have already been used in \cite{BMP}, but in our case we cannot exploit the linear structure of Carnot groups, so that we have to rely on the regularizing properties of the heat flow on $\RCD$ spaces to face additional difficulties. The case of possibly different $E$ and $F$ is obtained with a polarization argument and a careful computation: due to the lack of a linear structure, the strategy used to prove \cite[Theorem 3.1]{MPPP} seems not suitable to our case.
\begin{lem}\label{maincomp}
		Let $(\XX,\dist,\mass)$ be an $\RCD(K,N)$ space of essential dimension \(n\). Let $E,F\subseteq\XX$ be two sets of finite perimeter and finite measure. 
		Then
		\begin{equation}\label{vfsvdnjc}
			\lim_{t\searrow 0}\frac{1}{\sqrt{t}}\int (\nchi_E-\heat_t\nchi_E)\nchi_F\,\dd\mass= \frac{1}{\sqrt{\pi}}\frac{\omega_{n-1}}{\omega_n}\int_{\FF E\cap\FF F}\nu_E\,\cdot\,\nu_F\,\dd\HH^h.
		\end{equation}
In particular, it holds that
\begin{equation}\label{eq:main_comp_E=F}
\lim_{t\searrow 0}\frac{1}{\sqrt{t}}\int (\nchi_E-\heat_t\nchi_E)\nchi_E\,\dd\mass=
\frac{1}{\sqrt\pi}|\DIFF\nchi_E|(\XX)=\frac{1}{\sqrt\pi}\frac{\omega_{n-1}}{\omega_n}\HH^h(\mathcal F E).
\end{equation}
	\end{lem}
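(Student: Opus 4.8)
The idea is to turn \eqref{vfsvdnjc} into a blow-up statement via an integration by parts and L'H\^opital's rule, to settle the case $E=F$ by adapting the blow-up of Lemma \ref{lem:integr_limit}, and to reach general $E,F$ by a polarization over sets of finite perimeter. For every $t>0$ the function $\heat_t\nchi_E$ lies in ${\rm Test}(\XX)$: it belongs to $D(\Delta)$, it is bounded and Lipschitz by \eqref{eq:maximum_principle_RCD}, \eqref{eq:Linfty_to_Lip} and the Sobolev-to-Lipschitz property, and $\Delta\heat_t\nchi_E=\heat_{t/2}\Delta\heat_{t/2}\nchi_E\in\heat_{t/2}\big(L^2(\mass)\big)\subseteq D(\Delta)\subseteq W^{1,2}(\XX)$ by \eqref{eq:Delta_h_commute}. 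For sets $A,B$ of finite perimeter and measure set $\Psi_{A,B}(t)\coloneqq\int(\nchi_A-\heat_t\nchi_A)\nchi_B\,\dd\mass=\mass(A\cap B)-\int\heat_t\nchi_A\,\nchi_B\,\dd\mass$; then $\Psi_{A,B}\in C^1((0,\infty))$, $\Psi_{A,B}(0^+)=0$, and the Gauss--Green formula (Theorem \ref{Gauss-Green} with $f=\nchi_B$, $g=\heat_t\nchi_A$) gives
\[
\Psi_{A,B}'(t)=-\int\nchi_B\,\Delta\heat_t\nchi_A\,\dd\mass=\int\nu_B\cdot\pi_{|\DIFF\nchi_B|}(\bar\nabla\heat_t\nchi_A)\,\dd|\DIFF\nchi_B|,\qquad t>0.
\]
By \eqref{eq:Linfty_to_Lip}, $|\Psi_{A,B}'(t)|\le(e^{K^-}\sqrt{2t})^{-1}|\DIFF\nchi_B|(\XX)$ for $t\in(0,1)$, so $\Psi_{A,B}'\in L^1_{\rm loc}([0,\infty))$ and $\Psi_{A,B}(t)=\int_0^t\Psi_{A,B}'(s)\,\dd s$. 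Hence, by L'H\^opital's rule, to prove \eqref{vfsvdnjc} it suffices to show that $2\sqrt t\,\Psi_{E,F}'(t)$ converges, as $t\searrow0$, to the right-hand side of \eqref{vfsvdnjc}.

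\textbf{Step 2 (the case $E=F$).} Here $\Psi_{E,E}'(t)=\int\nu_E\cdot\pi_{|\DIFF\nchi_E|}(\bar\nabla\heat_t\nchi_E)\,\dd|\DIFF\nchi_E|$, and since $|\nu_E\cdot\pi_{|\DIFF\nchi_E|}(\bar\nabla\heat_t\nchi_E)|\le\overline{|\nabla\heat_t\nchi_E|}\le(e^{K^-}\sqrt{2t})^{-1}$ holds $|\DIFF\nchi_E|$-a.e.\ by \eqref{eq:Linfty_to_Lip}, dominated convergence reduces matters to the pointwise limit
\[
\lim_{t\searrow0}\sqrt t\,\nu_E(x)\cdot\pi_{|\DIFF\nchi_E|}(\bar\nabla\heat_t\nchi_E)(x)=\frac{1}{2\sqrt\pi}\qquad\text{for }|\DIFF\nchi_E|\text{-a.e.\ }x.
\]
This is proved by the blow-up scheme of Lemma \ref{lem:integr_limit}: for $x\in\FF E$ one rescales around $x$ so that, by the vectorial analogue of \eqref{eq:integr_limit_aux1}, $\sqrt t\,\bar\nabla\heat_t\nchi_E$ becomes the rescaled gradient $\bar\nabla^i\heat_1^i\nchi_E$; using the convergence of the rescaled heat flows and the cut-off \eqref{eq:heat_kernel_int_est}, its pairing with $\nu_E(x)$ converges to $\nu_H\cdot\nabla^{\RR^n}\heat_1^{\RR^n}\nchi_H(0)$, where $H$ is the tangent half-space at $x$ and $\nu_E(x)$ corresponds to its inner normal $\nu_H$ (stability of polar vectors under blow-up). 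The one-dimensional computation already in Lemma \ref{lem:integr_limit} gives $\nabla^{\RR^n}\heat_1^{\RR^n}\nchi_H(0)=p_1^{\RR}(0,0)\,\nu_H=\tfrac{1}{2\sqrt\pi}\nu_H$, so the limit equals $\tfrac{1}{2\sqrt\pi}$. Therefore $\lim_{t\searrow0}\tfrac{1}{\sqrt t}\Psi_{E,E}(t)=\tfrac{1}{\sqrt\pi}|\DIFF\nchi_E|(\XX)$, which is \eqref{eq:main_comp_E=F} via \eqref{reprformula}.

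\textbf{Step 3 (polarization).} For fixed $t$ the map $(\nchi_A,\nchi_B)\mapsto\Psi_{A,B}(t)$ is bilinear and, by \eqref{eq:h_self-adj} and mass conservation of $\heat_t$, symmetric; moreover $A\cup B$ and $A\cap B$ are again of finite perimeter and measure, with $\nchi_A+\nchi_B=\nchi_{A\cup B}+\nchi_{A\cap B}$. Taking $A,B$ disjoint gives $\Psi_{A\cup B,A\cup B}=\Psi_{A,A}+2\Psi_{A,B}+\Psi_{B,B}$, so by Step 2 the limit $\lim_{t\searrow0}\tfrac{1}{\sqrt t}\Psi_{A,B}(t)$ exists for disjoint $A,B$ and equals $\tfrac{1}{2\sqrt\pi}\big(|\DIFF\nchi_{A\cup B}|(\XX)-|\DIFF\nchi_A|(\XX)-|\DIFF\nchi_B|(\XX)\big)$. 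Writing $\nchi_E=\nchi_{E\setminus F}+\nchi_{E\cap F}$, $\nchi_F=\nchi_{F\setminus E}+\nchi_{E\cap F}$ and expanding $\Psi_{E,F}$ over the pairwise disjoint pieces $E\setminus F$, $F\setminus E$, $E\cap F$, it follows that $\lim_{t\searrow0}\tfrac{1}{\sqrt t}\Psi_{E,F}(t)$ exists and equals
\[
\frac{1}{2\sqrt\pi}\Big(|\DIFF\nchi_{E\triangle F}|(\XX)+|\DIFF\nchi_E|(\XX)+|\DIFF\nchi_F|(\XX)-2|\DIFF\nchi_{E\setminus F}|(\XX)-2|\DIFF\nchi_{F\setminus E}|(\XX)\Big).
\]
By the fine structure of sets of finite perimeter in $\RCD$ spaces (Federer's criterion, rectifiability of the reduced boundaries, and the De Giorgi--Federer formulas for the reduced boundaries of intersections, unions and symmetric differences), splitting each reduced boundary $\FF(\,\cdot\,)$ according to the position relative to the other set and splitting $\FF E\cap\FF F$ according to whether $\nu_E=\nu_F$ or $\nu_E=-\nu_F$, a direct bookkeeping shows that the parenthesis above equals $2\int_{\FF E\cap\FF F}\nu_E\cdot\nu_F\,\dd|\DIFF\nchi_E|$; applying \eqref{reprformula} converts $|\DIFF\nchi_E|$ into $\tfrac{\omega_{n-1}}{\omega_n}\HH^h$ and yields \eqref{vfsvdnjc}.

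\textbf{Main obstacle.} The crux is the vectorial blow-up in Step 2: one must track the \emph{direction} of the rescaled gradient $\bar\nabla^i\heat_1^i\nchi_E$ and of its pairing with $\nu_E(x)$ across the pmGH convergence of the rescaled spaces, which is finer than the norm-only statement of Lemma \ref{lem:integr_limit} and rests on the stability of the (capacitary) tangent modules and of polar vectors under blow-up. A secondary technical point is the measure-theoretic bookkeeping in Step 3, relying on the full fine structure of sets of finite perimeter on $\RCD$ spaces.
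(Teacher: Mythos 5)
Your overall scaffolding (L'H\^opital, settle the diagonal case $E=F$, then polarize and do measure-theoretic bookkeeping) is the same as the paper's, and your Step~1 and Step~3 are essentially correct: the verification that $\heat_t\nchi_E\in{\rm Test}(\XX)$ is sound, the reduction of \eqref{vfsvdnjc} to $\lim_{t\searrow0}2\sqrt t\,\Psi'_{E,F}(t)$ is valid, and I checked that the polarization formula $\frac{1}{2\sqrt\pi}\big(|\DIFF\nchi_{E\triangle F}|+|\DIFF\nchi_E|+|\DIFF\nchi_F|-2|\DIFF\nchi_{E\setminus F}|-2|\DIFF\nchi_{F\setminus E}|\big)(\XX)$ does reduce to $\frac{1}{\sqrt\pi}\int_{\FF E\cap\FF F}\nu_E\cdot\nu_F\,\dd|\DIFF\nchi_E|$ by the reduced-boundary decompositions, matching the paper's bookkeeping.

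The genuine gap is in Step~2. After Gauss--Green you must prove the pointwise \emph{vectorial} blow-up
$\sqrt t\,\nu_E(x)\cdot\pi_{|\DIFF\nchi_E|}(\bar\nabla\heat_t\nchi_E)(x)\to\frac{1}{2\sqrt\pi}$
for $|\DIFF\nchi_E|$-a.e.\ $x$, and this does not follow from anything cited. Lemma~\ref{lem:integr_limit} is a \emph{scalar} statement about $t\,\heat_{t^2}|\nabla\heat_{t^2}\nchi_E|(x)$: it controls only the norm of the rescaled gradient, and moreover after averaging by $\heat_{t^2}$ (which smears the evaluation away from the codimension-one set $\FF E$). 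Likewise \cite[Proposition~4.12]{ambrosio2018rigidity}, which underpins it, gives weak convergence of the measures $|\nabla^i\heat_1^i\nchi_E|\,\mass^x_{t_i}$, again a scalar object. Your claim requires additionally: (a)~a coherent pointwise evaluation of the capacitary vector field $\pi_{|\DIFF\nchi_E|}(\bar\nabla\heat_t\nchi_E)$ on the null set $\FF E$ for each $t$, compatible with the blow-up; (b)~stability of the \emph{direction} of the rescaled capacitary gradient under pmGH convergence of tangent modules; and (c)~identification of $\nu_E(x)$ with the inner normal of the tangent half-space in a way that pairs consistently with the limit in (b). You flag this as the ``main obstacle'' but leave it as an assertion, so the central step of the argument is incomplete.

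The paper avoids all of this by staying scalar. After L'H\^opital it uses self-adjointness of $\heat_{t^2/2}$ (rather than Gauss--Green) to land on $\sqrt8\,t\int|\nabla\heat_{t^2}\nchi_E|^2\,\dd\mass$. It then invokes the relative-density convergence of \cite[Lemma~2.7]{bru2019rectifiability} --- the rigidity aspect of the $1$-Bakry--\'Emery estimate \eqref{eq:Bakry-Emery}, which says $e^{Kt^2}|\nabla\heat_{t^2}\nchi_E|/\heat_{t^2}^*|\DIFF\nchi_E|\to1$ in a suitable $L^1$ sense --- to replace $|\nabla\heat_{t^2}\nchi_E|^2$ by $|\nabla\heat_{t^2}\nchi_E|\,\heat_{t^2}^*|\DIFF\nchi_E|$ up to a vanishing error; self-adjointness then converts $t\int|\nabla\heat_{t^2}\nchi_E|\,\heat_{t^2}^*|\DIFF\nchi_E|\,\dd\mass$ into $t\int\heat_{t^2}|\nabla\heat_{t^2}\nchi_E|\,\dd|\DIFF\nchi_E|$, which is exactly the quantity handled by Lemma~\ref{lem:integr_limit}. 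No directional information about the gradient is ever needed. If you want to salvage your route, you would need to supply the vectorial blow-up as a separate lemma; otherwise, adopting the paper's relative-density reduction is the way to close the gap.
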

	\begin{proof}
	We have to compute
	$$
	\lim_{t\searrow 0}\frac{\int (\nchi_E-\heat_{t^2}\nchi_E)\nchi_F\,\dd\mass}{t}
	$$
	for $E$ and $F$ sets of finite perimeter and finite measure.
	Notice that as $E$ and $F$ have finite measure, $\int (\nchi_E-\heat_{t^2}\nchi_E)\nchi_F\,\dd\mass\rightarrow 0$ as $t\searrow 0$,
	so that by L'H\^{o}pital's rule we reduce ourselves to compute
	
$$\lim_{t\searrow 0}\bigg(-2t{\int\nchi_F \Delta\heat_{t^2}\nchi_E\,\dd\mass}\bigg)=\lim_{t\searrow 0} 2t{\int\nabla\heat_{t^2/2}\nchi_F\,\cdot\,\nabla \heat_{t^2/2} \nchi_E\,\dd\mass}.$$
Therefore, we conclude that the left-hand side of \eqref{vfsvdnjc} is equal to
$$
\lim_{t\searrow 0}\sqrt{8}\,t{\int\nabla\heat_{t^2}\nchi_F\,\cdot\,\nabla \heat_{t^2} \nchi_E\,\dd\mass}.$$
		For $E,F\subseteq\XX$ sets of finite perimeter and finite measure, we write $$g_t(E,F)\defeq \sqrt{8}\, t\int\nabla\heat_{t^2}\nchi_E\,\cdot\,\nabla\heat_{t^2}\nchi_F\,\dd\mass,\quad\text{ for }t>0.$$
		If $E=F$, we simply write $g_t(E)$ instead of $g_t(E,E)$.
		Our aim is then to study $\lim_{t\searrow 0} g_t(E,F)$, where $E$ and $F$ are sets of finite perimeter and finite measure. We start by studying two particularly simple cases and then we treat the general case.
		\\\textsc{Case 1}: $E=F$. Applying \cite[Lemma 2.7]{bru2019rectifiability}, we get
		\begin{equation}\label{eq:comp_TV_claim1_aux2}\begin{split}
				&\bigg|t\int|\nabla\heat_{t^2}\nchi_E|^2\,\dd\mass-te^{-Kt^2}\int|\nabla\heat_{t^2}\nchi_E|\,\heat_{t^2}^*|\DIFF\nchi_E|\,\dd\mass\bigg|\\
				\overset{\phantom{\eqref{eq:Linfty_to_Lip}}}\leq\,&\frac{t}{e^{Kt^2}}\int|\nabla\heat_{t^2}\nchi_E|
				\bigg|e^{Kt^2}\frac{|\nabla\heat_{t^2}\nchi_E|}{\heat_{t^2}^*|\DIFF\nchi_E|}-1\bigg|\,\heat_{t^2}^*|\DIFF\nchi_E|\,\dd\mass\\
				\overset{\eqref{eq:Linfty_to_Lip}}\leq\,&\frac{1}{\sqrt 2\,e^{2K^-}}
				\int\bigg|e^{Kt^2}\frac{|\nabla\heat_{t^2}\nchi_E|}{\heat_{t^2}^*|\DIFF\nchi_E|}-1\bigg|\,\heat_{t^2}^*|\DIFF\nchi_E|\,\dd\mass\to 0,
				\quad\text{ as }t\searrow 0.
		\end{split}\end{equation}
		Recall that \(|\nabla\heat_{t^2}\nchi_E|\in L^\infty(\mass)\) holds for every \(t>0\) by \eqref{eq:Linfty_to_Lip}. Hence, Lemma \ref{lem:integr_limit} yields
		\begin{equation}\label{eq:comp_TV_claim1_aux3}
			\lim_{t\searrow 0}t\int|\nabla\heat_{t^2}\nchi_E|\,\heat_{t^2}^*|\DIFF\nchi_E|\,\dd\mass=
			\lim_{t\searrow 0}t\int\heat_{t^2}|\nabla\heat_{t^2}\nchi_E|\,\dd|\DIFF\nchi_E|=\frac{1}{\sqrt{8\pi}}|\DIFF\nchi_E|(\XX).
		\end{equation}
		Then, by combining  \eqref{eq:comp_TV_claim1_aux2},  \eqref{eq:comp_TV_claim1_aux3} and finally \eqref{reprformula}, 
		we obtain that 
		$$
		\lim_{t \searrow 0} g_t(E)=\sqrt{8}\frac{1}{\sqrt{8\pi}}|\DIFF\nchi_E|(\XX)=\frac{1}{\sqrt{\pi}}\frac{\omega_{n-1}}{\omega_n} \HH^h(\FF E).
		$$
		\\\textsc{Case 2}: $E\cap F=\varnothing$.  We start noticing that $\nchi_{E\cup F}=\nchi_E+\nchi_F$, so that the linearity of the heat flow and of the gradient yields
		\begin{align*}
			g_t(E\cup F)=g_t(E)+g_t(F)+2g_t(E,F).
		\end{align*}
		Therefore, by the first case,
		$$
		\lim_{t\searrow 0} g_t(E,F)=\frac{1}{\sqrt{\pi}}\frac{|\DIFF\nchi_{E\cup F}|(\XX)-|\DIFF\nchi_{E}|(\XX)-|\DIFF\nchi_{F}|(\XX)}{2}.
		$$
		As $E$ and $F$ are disjoint, by \cite[Theorem 4.11]{bru2021constancy},  \cite[Lemma 3.25]{BG22}, item 2) of Proposition \ref{zuppa}, and \eqref{reprformula} we have that
		\begin{align*}
			|\DIFF\nchi_{E\cup F}|(\XX)&=\frac{\omega_{n-1}}{\omega_n} \big(\HH^h(\FF E\setminus \FF F)+\HH^h(\FF F\setminus \FF E)\big)\\&=\frac{\omega_{n-1}}{\omega_n} \big(\HH^h(\FF E)+\HH^h(\FF F)-2\HH^h(\FF E\cap \FF F)\big),
		\end{align*}
		so that, by \eqref{reprformula} again,
		$$
		\lim_{t\searrow 0} g_t(E,F)=-\frac{1}{\sqrt{\pi}}\frac{\omega_{n-1}}{\omega_n} \HH^h(\FF E\cap \FF F).
		$$
		\\\textsc{Case 3: the general case.} Write $E=(E\cap F)\cup (E\setminus F)$ and $F=(F\cap E)\cup (F\setminus E)$; notice that unions are disjoint. Exploiting linearity as before, we write
		\begin{align*}
			g_t(E,F)=g_t(E\cap F)+g_t(E\cap F,F\setminus E)+g_t(E\setminus F,F\cap E)+g_t(E\setminus F,F\setminus E).
		\end{align*}
		Using the two cases treated above, we have that
		\begin{equation}\label{eq:gen_formula_g_t}\begin{split}
			\lim_{t\searrow 0} g_t(E,F)=&\frac{1}{\sqrt{\pi}}\frac{\omega_{n-1}}{\omega_n} \big(\HH^h(\FF(E\cap F))-\HH^h(\FF(E\cap F)\cap \FF(F\setminus E) ) \\&-\HH^h(\FF(E\setminus F)\cap \FF(F\cap E) ) -\HH^h(\FF(E\setminus F)\cap \FF(F\setminus E) ) \big).
		\end{split}\end{equation}
		By density arguments we obtain that, up to $\HH^h$-negligible sets, (see also \cite[Lemma 3.25]{BG22})
		\begin{align*}
			\FF(E\cap F)&=\big(\FF E\cap F^{(1)}\big)\cup \big(\FF F\cap E^{(1)}\big)\cup  \big(\FF E\cap \FF F\cap (E\Delta F)^{(0)}\big),\\
			\FF(F\setminus E)&=\big(\FF F\cap E^{(0)}\big)\cup \big(\FF E\cap F^{(1)}\big)\cup  \big(\FF E\cap \FF F\cap (E\Delta F)^{(1)}\big),\\
			\FF(E\setminus F)&=\big(\FF E\cap F^{(0)}\big)\cup \big(\FF F\cap E^{(1)}\big)\cup  \big(\FF E\cap \FF F\cap (E\Delta F)^{(1)}\big),
		\end{align*}
		where all unions are disjoint.
		We use these identities to compute, up to $\HH^h$-negligible sets,
		\begin{align*}
			\FF(E\cap F)\cap \FF(F\setminus E)&=\FF E\cap F^{(1)},\\
			\FF(E\setminus F)\cap \FF(F\cap E)&=\FF F\cap E^{(1)},\\
			\FF(E\setminus F)\cap \FF(F\setminus E)&=\FF E\cap \FF F\cap (E\Delta F)^{(1)}.
		\end{align*}
		We compute then
		\begin{align*}
			&\HH^h(\FF(E\cap F))-\HH^h(\FF(E\cap F)\cap \FF(F\setminus E) )- \HH^h(\FF(E\setminus F)\cap \FF(F\cap E) ) \\&\qquad\qquad-\HH^h(\FF(E\setminus F)\cap \FF(F\setminus E) ) 
			\\&\qquad=\HH^h\big(\FF E\cap F^{(1)}\big)+\HH^h \big(\FF F\cap E^{(1)}\big)+\HH^h  \big(\FF E\cap \FF F\cap (E\Delta F)^{(0)}\big)
			\\&\qquad\qquad-\HH^h\big(\FF E\cap F^{(1)}\big)-\HH^h\big(\FF F\cap E^{(1)}\big)-\HH^h\big(\FF E\cap \FF F\cap (E\Delta F)^{(1)}\big)
			\\&\qquad= \HH^h  \big(\FF E\cap \FF F\cap (E\Delta F)^{(0)}\big)-\HH^h\big(\FF E\cap \FF F\cap (E\Delta F)^{(1)}\big)
			\\&\qquad=\int_{\FF E\cap \FF F}\nu_E\,\cdot\,\nu_F\,\dd\HH^h,
		\end{align*}
		where in the last equality we used \cite[Lemma 3.25]{BG22}. Therefore, by recalling \eqref{eq:gen_formula_g_t} we conclude that the statement
		holds for any \(E,F\subseteq\XX\) of finite perimeter and finite measure.
\end{proof}

Having Lemma \ref{maincomp} at our disposal, there is no effort in obtaining Theorem \ref{cdsnjca} below.
\begin{thm}\label{cdsnjca}
Let \((\XX,\dist,\mass)\) be an \(\RCD(K,N)\) space. Let \(E\subseteq\XX\) be a set of finite perimeter such that either $\mass(E)<\infty$ or $\mass(\XX\setminus E)<\infty$.
Then 
\begin{equation}\label{eq:main_for_sets}
\lim_{t\searrow 0}\frac{1}{\sqrt t}\int\!\!\!\int p_t(x,y)|\nchi_E(x)-\nchi_E(y)|\,\dd\mass(x)\,\dd\mass(y)=\frac{2}{\sqrt{\pi}}|\DIFF\nchi_E|(\XX).
\end{equation}
\end{thm}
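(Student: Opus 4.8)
The plan is to reduce \eqref{eq:main_for_sets} to the one-parameter identity \eqref{eq:main_comp_E=F} already proved in Lemma \ref{maincomp}, by a purely measure-theoretic manipulation of the double integral. A preliminary observation is that the problem is symmetric under replacing $E$ by its complement: both $|\nchi_E(x)-\nchi_E(y)|$ and $|\DIFF\nchi_E|$ are unchanged if $E$ is swapped with $\XX\setminus E$. Hence I would first treat the case $\mass(E)<\infty$, and the case $\mass(\XX\setminus E)<\infty$ follows at once by applying the former to $\XX\setminus E$ (which is then a set of finite perimeter and finite measure).

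So assume $\mass(E)<\infty$. Since $\nchi_E$ takes values in $\{0,1\}$, we have the pointwise identity $|\nchi_E(x)-\nchi_E(y)|=\nchi_E(x)\nchi_{\XX\setminus E}(y)+\nchi_{\XX\setminus E}(x)\nchi_E(y)$, and by the symmetry $p_t(x,y)=p_t(y,x)$ (which follows from \eqref{eq:h_self-adj}) together with Fubini's theorem applied to the nonnegative integrand, the two resulting cross terms are equal, so that
\[
\int\!\!\!\int p_t(x,y)|\nchi_E(x)-\nchi_E(y)|\,\dd\mass(x)\,\dd\mass(y)=2\int\nchi_E(x)\bigg(\int p_t(x,y)\nchi_{\XX\setminus E}(y)\,\dd\mass(y)\bigg)\dd\mass(x).
\]
Now, since $p_t(x,\cdot)\mass=\mathscr H_t\delta_x$ is a probability measure on $\XX$, one has $\int p_t(x,y)\nchi_{\XX\setminus E}(y)\,\dd\mass(y)=1-\heat_t\nchi_E(x)$, where $\heat_t\nchi_E=\heat_t^*(\nchi_E\mass)$ because $\nchi_E\in L^1(\mass)^+$. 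Plugging this in and using $\int\nchi_E\,\dd\mass=\mass(E)<\infty$ and $0\le\heat_t\nchi_E\le 1$ to ensure all terms are finite, we obtain
\[
\int\!\!\!\int p_t(x,y)|\nchi_E(x)-\nchi_E(y)|\,\dd\mass(x)\,\dd\mass(y)=2\int(\nchi_E-\heat_t\nchi_E)\nchi_E\,\dd\mass.
\]
Dividing by $\sqrt t$ and letting $t\searrow 0$, equation \eqref{eq:main_comp_E=F} of Lemma \ref{maincomp} yields exactly \eqref{eq:main_for_sets}.

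Given Lemma \ref{maincomp}, there is essentially no obstacle left: the argument is a short algebraic reduction. The only points requiring a little care are the justification of Fubini's theorem (immediate from nonnegativity of $p_t$ and of the indicators) and the role of the hypothesis $\mass(E)<\infty$ or $\mass(\XX\setminus E)<\infty$, which is precisely what makes $\nchi_E$ (respectively $\nchi_{\XX\setminus E}$) lie in $L^1(\mass)$ so that $\heat_t^*(\nchi_E\mass)=\heat_t\nchi_E$ and the finiteness bounds above are available; without it one would be forced to write $\heat_t\nchi_{\XX\setminus E}$ for a function not in $L^1(\mass)$, which is why the reduction to the finite-measure set is performed at the outset.
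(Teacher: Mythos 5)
Your proof is correct and follows essentially the same route as the paper: reduce to the case $\mass(E)<\infty$, rewrite the double integral via Fubini and mass-conservation of $p_t(x,\cdot)\,\dd\mass$ as $2\int(\nchi_E-\heat_t\nchi_E)\nchi_E\,\dd\mass$, and invoke \eqref{eq:main_comp_E=F} from Lemma \ref{maincomp}. Your remark that one should express $\int p_t(x,y)\nchi_{\XX\setminus E}(y)\,\dd\mass(y)$ as $1-\heat_t\nchi_E(x)$ rather than applying the heat flow directly to $\nchi_{\XX\setminus E}$ (which need not be in $L^1$ or $L^2$) is a sensible clarification of a point the paper's computation glosses over.
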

\begin{proof}
We can assume with no loss of generality that $\mass(E)<\infty$. We compute (all the integrands are positive)
\begin{equation}\label{vjns}
	\begin{split}
		&\int\!\!\!\int p_t(x,y)|\nchi_E(y)-\nchi_E(x)|\,\dd\mass(y)\,\dd\mass(x)\\
		=\,&\int(1-\nchi_E(x))\int_E p_t(x,y)\,\dd\mass(y)\,\dd\mass(x)+\int\nchi_E(x)\int_{\XX\setminus E}p_t(x,y)\,\dd\mass(y)\,\dd\mass(x)\\
		=\,&\int\nchi_{\XX\setminus E}\,\heat_t\nchi_E\,\dd\mass+\int\nchi_E\,\heat_t\nchi_{\XX\setminus E}\,\dd\mass
		=2	\int_{\XX\setminus E}\heat_t\nchi_E\,\dd\mass=2\int (1-\nchi_E)\heat_t\nchi_E\,\dd\mass\\=\,&2\bigg(\mass(E)-\int \nchi_E\,\heat_t\nchi_E\,\dd\mass\bigg)=2\int(\nchi_E-\heat_t\nchi_E)\nchi_E\,\dd\mass.
\end{split}
\end{equation}
We obtain \eqref{eq:main_for_sets} by dividing the above equation by $\sqrt t $, letting $t\searrow 0$, and using \eqref{eq:main_comp_E=F}.
\end{proof}

As a technical tool, we need the following easy computation, obtained via classical techniques. Note the role played by the regularizing properties of the heat flow on $\RCD$ spaces.
\begin{lem}\label{Lebesgue}
Let $(\XX,\dist,\mass)$ be an $\RCD(K,N)$ space and $E,F\subseteq\XX$ two sets of finite perimeter and finite measure. Then
\begin{equation}\notag
\bigg|\int(\heat_t\nchi_E-\nchi_E)\nchi_F\,\dd\mass\bigg|\le 2 e^{K^-(\frac{t}{2}-1)}|\DIFF\nchi_E|(\XX)\sqrt t.
\end{equation}
\end{lem}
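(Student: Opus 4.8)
The plan is to estimate the time derivative of $t\mapsto\int(\heat_t\nchi_E-\nchi_E)\nchi_F\,\dd\mass$ and then integrate from $0$. First I would observe that, since $\nchi_E\in L^2(\mass)\cap L^\infty(\mass)$ (here $\mass(E)<\infty$ is used), the curve $t\mapsto\heat_t\nchi_E$ is locally absolutely continuous in $L^2(\mass)$ with $\frac{\dd}{\dd t}\heat_t\nchi_E=\Delta\heat_t\nchi_E$, so that writing
\[
\int(\heat_t\nchi_E-\nchi_E)\nchi_F\,\dd\mass=\int_0^t\frac{\dd}{\dd s}\int\heat_s\nchi_E\,\nchi_F\,\dd\mass\,\dd s=\int_0^t\int\Delta\heat_s\nchi_E\,\nchi_F\,\dd\mass\,\dd s
\]
is legitimate (using $\nchi_F\in L^2(\mass)$ and $\mass(F)<\infty$). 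Next, using the semigroup property $\heat_s=\heat_{s/2}\circ\heat_{s/2}$, the fact that $\heat_{s/2}$ commutes with $\Delta$, and self-adjointness \eqref{eq:h_self-adj}, I would rewrite the inner integral in the symmetric form
\[
\int\Delta\heat_s\nchi_E\,\nchi_F\,\dd\mass=\int\Delta\heat_{s/2}\nchi_E\,\heat_{s/2}\nchi_F\,\dd\mass=-\int\nabla\heat_{s/2}\nchi_E\cdot\nabla\heat_{s/2}\nchi_F\,\dd\mass,
\]
where the last step is the definition \eqref{eq:def_Laplacian} of the Laplacian (with $\heat_{s/2}\nchi_F\in W^{1,2}(\XX)$, which holds by \eqref{eq:Bakry-Emery} since $\nchi_F\in\BV(\XX)\cap L^\infty(\mass)$).

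Now I would bound the bilinear term pointwise. By Cauchy--Schwarz for the carré du champ, $|\nabla\heat_{s/2}\nchi_E\cdot\nabla\heat_{s/2}\nchi_F|\le|\nabla\heat_{s/2}\nchi_E|\,|\nabla\heat_{s/2}\nchi_F|$ $\mass$-a.e.; then I would use the $1$-Bakry--Émery estimate \eqref{eq:Bakry-Emery} on the $E$-factor, giving $|\nabla\heat_{s/2}\nchi_E|\le e^{-Ks/2}\heat_{s/2}^*|\DIFF\nchi_E|$ $\mass$-a.e., and the $L^\infty$-to-Lipschitz regularization \eqref{eq:Linfty_to_Lip} on the $F$-factor, giving $\||\nabla\heat_{s/2}\nchi_F|\|_{L^\infty(\mass)}\le\frac{\|\nchi_F\|_{L^\infty(\mass)}}{e^{K^-}\sqrt{s}}=\frac{1}{e^{K^-}\sqrt{s}}$ for $s\in(0,2]$. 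Combining and integrating in space, together with the mass-preservation property \eqref{eq:dual_mass_preserv} (namely $\int\heat_{s/2}^*|\DIFF\nchi_E|\,\dd\mass=|\DIFF\nchi_E|(\XX)$), yields
\[
\bigg|\int\nabla\heat_{s/2}\nchi_E\cdot\nabla\heat_{s/2}\nchi_F\,\dd\mass\bigg|\le\frac{e^{-Ks/2}}{e^{K^-}\sqrt{s}}|\DIFF\nchi_E|(\XX)\le\frac{e^{K^-(s/2-1)}}{\sqrt{s}}|\DIFF\nchi_E|(\XX),
\]
where in the last step I bounded $e^{-Ks/2}\le e^{K^-s/2}$.

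Finally I would integrate this in $s$ over $(0,t)$, assuming $t\le 2$ so that \eqref{eq:Linfty_to_Lip} applies throughout (for $t>2$ the statement would need a trivial separate remark, but the stated range of interest is $t$ small); using $\int_0^t s^{-1/2}\,\dd s=2\sqrt t$ and monotonicity of $s\mapsto e^{K^-(s/2-1)}$ to pull out the factor $e^{K^-(t/2-1)}$, I obtain
\[
\bigg|\int(\heat_t\nchi_E-\nchi_E)\nchi_F\,\dd\mass\bigg|\le 2e^{K^-(t/2-1)}|\DIFF\nchi_E|(\XX)\sqrt t,
\]
which is the claim. The only mildly delicate points are the justification of differentiating under the integral sign (handled by local absolute continuity of the heat flow in $L^2(\mass)$ and finiteness of $\mass(E)$, $\mass(F)$) and the verification that $\heat_{s/2}\nchi_F\in W^{1,2}(\XX)$ so that \eqref{eq:def_Laplacian} may be invoked; both are routine given the results recalled in the Preliminaries, so I do not expect a genuine obstacle here.
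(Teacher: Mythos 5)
Your proof is correct and follows essentially the same route as the paper's: you write the difference as a time integral of the derivative, symmetrize to get $-\int\nabla\heat_{s/2}\nchi_E\cdot\nabla\heat_{s/2}\nchi_F\,\dd\mass$, use the $L^\infty$-to-Lipschitz estimate \eqref{eq:Linfty_to_Lip} on the $F$-factor and the $1$-Bakry--\'Emery estimate \eqref{eq:Bakry-Emery} plus mass preservation \eqref{eq:dual_mass_preserv} on the $E$-factor, and integrate $s^{-1/2}$ over $(0,t)$. Your remark about the range restriction coming from \eqref{eq:Linfty_to_Lip} being stated for $t\in(0,1]$ is a fair point of care that the paper passes over silently (and is harmless, since the lemma is only applied in the regime $t\searrow 0$).
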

\begin{proof}We compute
\[\begin{split}
\bigg|\int(\heat_t\nchi_E-\nchi_E)\nchi_F\,\dd\mass\bigg|&\overset{\phantom{\eqref{eq:Linfty_to_Lip}}}
=\bigg|\int_0^t\frac{\dd}{\dd s}\int(\heat_s\nchi_E-\nchi_E)\nchi_F\,\dd\mass\,\dd s\bigg|=
\bigg|\int_0^t\!\!\int\nchi_F\Delta\heat_s\nchi_E\,\dd\mass\,\dd s\bigg|\\
&\overset{\phantom{\eqref{eq:Linfty_to_Lip}}}=\bigg|\int_0^t\!\!\int\nabla\heat_{s/2}\nchi_F\cdot\nabla\heat_{s/2}\nchi_{ E}\,\dd\mass\,\dd s\bigg|\\
&\overset{\phantom{\eqref{eq:Linfty_to_Lip}}}\leq\int_0^t\!\!\int|\nabla\heat_{s/2}\nchi_F||\nabla\heat_{s/2}\nchi_E|\,\dd\mass\,\dd s\\
&\overset{\eqref{eq:Linfty_to_Lip}}\leq\frac{1}{e^{K^-}}\int_0^t\frac{1}{\sqrt s}\int|\nabla\heat_{s/2}\nchi_E|\,\dd\mass\,\dd s\\
&\overset{\eqref{eq:Bakry-Emery}}\leq\frac{1}{e^{K^-}}\int_0^t\frac{e^{-Ks/2}}{\sqrt s}\int\heat_{s/2}^*|\DIFF\nchi_E|\,\dd\mass\,\dd s\\
&\overset{\phantom{\eqref{eq:Linfty_to_Lip}}}\leq e^{K^-(\frac{t}{2}-1)}\int_0^t\frac{e^{-Ks/2}}{\sqrt s}\int\heat_{s/2}^*|\DIFF\nchi_E|\,\dd\mass\,\dd s\\
&\overset{\eqref{eq:dual_mass_preserv}}=e^{K^-(\frac{t}{2}-1)}|\DIFF\nchi_E|(\XX)\int_0^t\frac{1}{\sqrt s}\,\dd s\\
&\overset{\phantom{\eqref{eq:Linfty_to_Lip}}}=2 e^{K^-(\frac{t}{2}-1)}|\DIFF\nchi_E|(\XX)\sqrt t,
\end{split}\]
which proves the claim.
\end{proof}

The following remark is borrowed from \cite{BMP}.
\begin{rem}\label{rem:Fubini_argument}
Given a metric measure space \((\XX,\dist,\mass)\) and a function \(f\in L^1(\mass)\), it holds that
\begin{equation}\label{eq:Fubini_argument}
\int_\RR|\nchi_{\{f>t\}}(x)-\nchi_{\{f>t\}}(y)|\,\dd t=|f(x)-f(y)|,\quad\text{ for }(\mass\otimes\mass)\text{-a.e.\ }(x,y)\in\XX\times\XX.
\end{equation}
Indeed, fixed a Borel representative \(\bar f\colon\XX\to\RR\) of \(f\) and \(x,y\in\XX\) with \(\bar f(y)\leq\bar f(x)\),
we have that \(t\in\RR\) satisfies \(|\nchi_{\{\bar f>t\}}(x)-\nchi_{\{\bar f>t\}}(y)|=1\) if \(\bar f(y)\leq t<\bar f(x)\),
and \(|\nchi_{\{\bar f>t\}}(x)-\nchi_{\{\bar f>t\}}(y)|=0\) otherwise. Therefore, we can compute
\[
\int_\RR|\nchi_{\{\bar f>t\}}(x)-\nchi_{\{\bar f>t\}}(y)|\,\dd t=\mathcal L^1\big(\big[\bar f(y),\bar f(x)\big)\big)=|\bar f(x)-\bar f(y)|,
\]
whence it follows that \(\int_\RR|\nchi_{\{\bar f>t\}}(x)-\nchi_{\{\bar f>t\}}(y)|\,\dd t=|\bar f(x)-\bar f(y)|\) for every \(x,y\in\XX\).
Finally, an application of Fubini's theorem ensures that \eqref{eq:Fubini_argument} is verified, as we claimed.\fr
\end{rem}
\subsection{Proof of the main results}
We are now finally in a position to prove the main results of this paper.
\begin{proof}[Proof of Theorem \ref{main1}] Lemma \ref{membership} i) says that if $f\notin W^{1,p}(\XX)$, then the statement holds.
Therefore, in what follows we assume $f\in W^{1,p}(\XX)$. As in the proof of \cite[Theorem 3.5]{Gor} (recall Lemma \ref{Gorny2} and
\eqref{eq:|Df|=lipf}), we see that it is enough to show that for any $f\in \LIP_{\rm bs}(\XX)$ it holds that
\begin{equation*}
\lim_{t\searrow 0}\frac{1}{t^{p/2}}\int\!\!\!\int p_t(x,y)|f(x)-f(y)|^p\,\dd\mass(x)\,\dd\mass(y)
=\frac{2^p}{\sqrt{\pi}}\Gamma\bigg(\frac{p+1}{2}\bigg) \int \lip(f)^p\,\dd\mass.
\end{equation*}
Then let $f\in \LIP_{\rm bs}(\XX)$ be fixed. We take a ball $B$ that contains the open $1$-neighborhood of the support of $f$.
Let $L$ denote the global Lipschitz constant of $f$. We will perform a change-of-variable, replacing $t$ with $t^2$. Then
\begin{align*}
&\frac{1}{t^{p}}\int\!\!\!\int p_{t^2}(x,y)|f(x)-f(y)|^p\,\dd\mass(x)\,\dd\mass(y)=\\
&\frac{1}{t^{p}}\int_{B}\!\int p_{t^2}(x,y)|f(x)-f(y)|^p\,\dd\mass(x)\,\dd\mass(y)
+\frac{1}{t^{p}}\int_{\XX\setminus B}\!\int p_{t^2}(x,y)|f(x)-f(y)|^p\,\dd\mass(x)\,\dd\mass(y).
\end{align*}
We treat the two summands on the right-hand side separately. For what concerns the first summand, notice that using \eqref{eqcomoda}
we obtain for \(\mass\)-a.e.\ \(y\in\XX\) that
\begin{align*}
\frac{1}{t^p}\int p_{t^2}(x,y)|f(x)-f(y)|^p\,\dd\mass(x)&\le L^p \int p_{t^2}(x,y)\frac{\dist(x,y)^p}{t^p}\,\dd\mass(x)\\
&\le C L^p \int p_{4 t^2}(x,y)\,\dd\mass(x)=C L^p.
\end{align*}
Therefore, by dominated convergence, and taking into account Lemma \ref{Gorny2}, we have that
\[
 \lim_{t\searrow 0} \frac{1}{t^{p}}\int_{B}\int p_{t^2}(x,y)|f(x)-f(y)|^p\,\dd\mass(x)\,\dd\mass(y)
 =\frac{2^p}{\sqrt{\pi}}\Gamma\bigg(\frac{p+1}{2}\bigg)\int \lip(f)^p\,\dd\mass.
\]
Now we treat the second summand. Notice that if $y\in \XX\setminus B$ and $x$ does not belong to the support of $f$, then $f(x)=f(y)=0$,
thus the following integrals are only over the set of \((x,y)\) such that $\dist(x,y)\ge 1$. Hence, using also \eqref{eqcomoda}
and the contractivity of the heat flow, we get
\begin{align*}
&\frac{1}{t^{p}}\int_{\XX\setminus B}\int p_{t^2}(x,y)|f(x)-f(y)|^p\,\dd\mass(x)\,\dd\mass(y)\\
\le\,& Ct\int\!\!\!\int p_{t^2}(x,y)\frac{\dist(x,y)^p}{t^p}\frac{\dist(x,y)}{t}(|f(x)|^p+|f(y)|^p)\,\dd\mass(x)\,\dd\mass(y)\\
\le\,& Ct\int\!\!\!\int p_{4 t^2}(x,y)(|f(x)|^p+|f(y)|^p)\,\dd\mass(x)\,\dd\mass(y)\\
\le\,& Ct\Vert f\Vert_{L^p(\mass)}^p,
\end{align*}
which converges to $0$ as $t\searrow 0$. Consequently, the proof is concluded.
\end{proof}
\begin{proof}[Proof of Theorem \ref{main2}]
Lemma \ref{membership} ii) says that if $f\notin \BV(\XX)$, then the statement holds. Therefore, in what follows we assume $f\in \BV(\XX)$.
We argue via coarea and integration via Cavalieri's formula, as done in the references \cite[Theorem 2.14]{BMP} and \cite[Theorem 4.3]{MPPP},
building upon Theorem \ref{cdsnjca}. Let $f\in \BV(\XX)$ be given. Recall that \eqref{vjns} and Lemma \ref{Lebesgue} imply that 
\begin{equation}\label{casjcan}
		\frac{1}{\sqrt{t}}\int\!\!\!\int p_{t}(x,y)|\nchi_{E}(x)-\nchi_{E}(y)|\,\dd\mass(x)\,\dd\mass(y)\le 4 e^{K^-(\frac{t}{2}-1)}|\DIFF\nchi_E|(\XX)
\end{equation}
for every $E\subseteq\XX$ of finite perimeter and finite measure, for every $t>0$. Notice that this holds even if $E$ has infinite measure, but $\XX\setminus E$ has finite measure.

Now fix a function \(f\in\BV(\XX)\). For any real number \(s\in\RR\), pick a Borel representative \(E_s\subseteq\XX\) of \(\{f>s\}\).
Fix any \((t_i)_i\subseteq(0,1)\) with \(t_i\searrow 0\). 
 Hence, by using the dominated convergence theorem, we obtain that
\begin{equation}\label{eq:conseq_DCT}\begin{split}
&\int_\RR\lim_{i\to\infty}\frac{1}{\sqrt{t_i}}\int\!\!\!\int p_{t_i}(x,y)|\nchi_{E_s}(x)-\nchi_{E_s}(y)|\,\dd\mass(x)\,\dd\mass(y)\,\dd s\\
=\,&\lim_{i\to\infty}\frac{1}{\sqrt{t_i}}\int_\RR\int\!\!\!\int p_{t_i}(x,y)|\nchi_{E_s}(x)-\nchi_{E_s}(y)|\,\dd\mass(x)\,\dd\mass(y)\,\dd s.
\end{split}\end{equation}
We can use dominated convergence thanks to \eqref{casjcan} together with the coarea formula; the latter ensures that the function
\(s\mapsto|\DIFF\nchi_{E_s}|(\XX)\) is integrable. Finally, we can compute
\[\begin{split}
|\DIFF f|(\XX)&=\int_\RR|\DIFF\nchi_{E_s}|(\XX)\,\dd s\qquad\text{(by coarea formula)}\\
&=\int_\RR\lim_{i\to\infty}\frac{1}{2}\sqrt{\frac{\pi}{t_i}}\int\!\!\!\int p_{t_i}(x,y)|\nchi_{E_s}(x)-\nchi_{E_s}(y)|\,\dd\mass(x)\,\dd\mass(y)\,\dd s
\qquad\text{(by \eqref{eq:main_for_sets})}\\
&=\lim_{i\to\infty}\frac{1}{2}\sqrt{\frac{\pi}{t_i}}\int_\RR\int\!\!\!\int p_{t_i}(x,y)|\nchi_{E_s}(x)-\nchi_{E_s}(y)|\,\dd\mass(x)\,\dd\mass(y)\,\dd s
\qquad\text{(by \eqref{eq:conseq_DCT})}\\
&=\lim_{i\to\infty}\frac{1}{2}\sqrt{\frac{\pi}{t_i}}\int\!\!\!\int p_{t_i}(x,y)\int_\RR|\nchi_{E_s}(x)-\nchi_{E_s}(y)|\,\dd s\,\dd\mass(x)\,\dd\mass(y)
\qquad\text{(by Fubini)}\\
&=\lim_{i\to\infty}\frac{1}{2}\sqrt{\frac{\pi}{t_i}}\int\!\!\!\int p_{t_i}(x,y)|f(x)-f(y)|\,\dd\mass(x)\,\dd\mass(y).
\qquad\text{(by Remark \ref{rem:Fubini_argument})}
\end{split}\]
By the arbitrariness of \(t_i\searrow 0\), this proves the statement.
\end{proof}

Now we prove Theorem \ref{thm:main3} via coarea and integration via Cavalieri's formula, as done in the reference \cite[Theorem 4.3]{MPPP}, building upon Lemma \ref{maincomp}.
\begin{proof}[Proof of Theorem \ref{thm:main3}]
	First, we write $f=f^+ -f^-$, where $f^+\defeq f\vee 0$ and $f^-\defeq(-f)\vee 0$. Thanks to the coarea formula, we can apply \cite[Lemma 3.24]{BG22} and infer that $\nu_{f^\pm}=\nu_{\pm f}$ $|\DIFF f^\pm|$-a.e. Also, a direct computation yields that $f^\wedge=(f^+)^\wedge-(f^-)^{\vee}$  and $f^\vee=(f^+)^\vee-(f^-)^{\wedge}$. Therefore, by linearity, we can assume that $f\ge 0$ $\mass$-a.e. We repeat the same argument for $g$ to see that we can assume that also $g\ge 
	0$ $\mass$-a.e. Up to scaling and adding a constant, we assume that $0\le g\le 1$ $\mass$-a.e. We can compute
$$
\int (f-\heat_t f) g\,\dd\mass=\int\!\!\!\int p_t(x,y) g(x)(f(x)-f(y))\,\dd\mass(x)\,\dd\mass(y).
$$	
Let now $E_s\defeq\{f>s\}$ and $F_s\defeq \{g>s\}$ and notice that for \(\mathcal L^1\)-a.e.\ $\sigma,\tau\in\RR$ it holds that
\begin{equation}\label{cnados}
\begin{split}
&\frac{1}{\sqrt t}\int\!\!\!\int \big|p_t(x,y)\nchi_{F_\tau}(x) (\nchi_{E_\sigma}(x)-\nchi_{E_\sigma}(y))\big|\,\dd\mass(x)\,\dd\mass(y)\\
\le\,&\frac{1}{\sqrt t}\int\!\!\!\int p_t(x,y)\big|\nchi_{E_\sigma}(x)-\nchi_{E_\sigma}(y)\big|\,\dd\mass(x)\,\dd\mass(y)
\overset{\eqref{casjcan}}\le 4e^{K^-(\frac{t}{2}-1)}|\DIFF\nchi_{E_\sigma}|(\XX).
\end{split}
\end{equation}
By Cavalieri's formula, we can write 
$$
p_t(x,y) g(x)(f(x)-f(y))=\int_0^\infty\!\!\!\int_0^1 p_t(x,y)\nchi_{F_\tau}(x) (\nchi_{E_\sigma}(x)-\nchi_{E_\sigma}(y))\,\dd\tau\,\dd\sigma,
$$
so that, by Fubini's theorem (whose application is justified by \eqref{cnados} and coarea), we obtain
\begin{align*}
\frac{1}{\sqrt t}\int (f-\heat_t f)g\,\dd\mass&=\frac{1}{\sqrt t}\int_0^\infty\!\!\!\int_0^1\!\!\!\int\!\!\!\int
p_t(x,y)\nchi_{F_\tau}(x)(\nchi_{E_\sigma}(x)-\nchi_{E_\sigma}(y))\,\dd\mass(x)\,\dd\mass(y)\,\dd\tau\,\dd\sigma\\&
=\frac{1}{\sqrt t}\int_0^\infty\!\!\!\int_0^1\!\!\!\int\nchi_{F_\tau}(\nchi_{E_\sigma}-\heat_t\nchi_{E_\sigma})\,\dd\mass\,\dd\tau\,\dd\sigma.
\end{align*}

Lemma \ref{Lebesgue} together with coarea again justify an application of the dominated convergence theorem in the limit as $t\searrow 0$ in the equation above, so that, by Lemma \ref{maincomp},
\begin{equation}\label{eq:proof_main3_1}\begin{split}
	\lim_{t\searrow 0} \frac{1}{\sqrt t}\int (f-\heat_t f) g\,\dd\mass&=
	\frac{1}{\sqrt{\pi}}\frac{\omega_{n-1}}{\omega_n}\int_0^\infty\!\!\!\int_0^1\!\!\!\int_{\FF {E_\sigma}\cap\FF {F_\tau}}\nu_{E_\sigma}\cdot\nu_{F_\tau}\dd\HH^h\,\dd\tau\,\dd\sigma\\
	&=\frac{1}{\sqrt{\pi}}\frac{\omega_{n-1}}{\omega_n}\int_0^\infty\!\!\!\int_0^1\!\!\!\int_{\FF {E_\sigma}\cap\FF {F_\tau}}\nu_{f}\cdot\nu_{g}\,\dd\HH^h\,\dd\tau\,\dd\sigma\\
	&=\frac{1}{\sqrt{\pi}}\frac{\omega_{n-1}}{\omega_n}\int_0^\infty\!\!\!\int_0^1\!\!\!\int\nchi_{\partial^*F_\tau}(\nu_{f}\cdot\nu_{g})
	\,\dd{(\HH^h\mres(\FF E_\sigma))}\,\dd\tau\,\dd\sigma,
\end{split}\end{equation}
where the second equality is due to \cite[Lemma 3.27]{BG22}. 

Now recall that the map $(\tau,x)\mapsto \nchi_{\partial^* F_\tau}(x)$ is measurable with respect to $\LL^1\otimes (\HH^h \mres (\FF E_\sigma))$
(cf.\ with the proof of \cite[Proposition 3.33]{BG22}), so that by Fubini's theorem we can write the last term appearing in \eqref{eq:proof_main3_1} as 
\begin{equation}\label{eq:proof_main3_2}\begin{split}
&\frac{1}{\sqrt{\pi}}\frac{\omega_{n-1}}{\omega_n}\int_0^\infty\!\!\!\int (g^\vee-g^\wedge)(\nu_f\cdot\nu_{g})\,\dd{(\HH^h\mres(\FF E_\sigma))}\,\dd\sigma\\
=\,&\frac{1}{\sqrt{\pi}}\int_0^\infty\!\!\!\int (g^\vee-g^\wedge)(\nu_f\cdot\nu_{g})\,\dd|\DIFF\nchi_{E_\sigma}|\,\dd\sigma,
\end{split}\end{equation}
where we took into account \cite[Lemma 2.11]{ABP22} and finally \eqref{reprformula} for the last equality.
Notice that the integration over $\XX$ is only on $S_g$, which is a $\sigma$-finite set with respect to $\HH^h$,
so that by \cite[Theorem 5.3]{ambmirpal04} we know that $|\DIFF f|\mres S_g=|\DIFF f|\mres (S_f\cap S_g)$.
Now, by coarea we can continue the computation as 
\begin{equation}\label{eq:proof_main3_3}\begin{split}
&\frac{1}{\sqrt{\pi}} \int_{S_g} (g^\vee-g^\wedge)(\nu_f\cdot\nu_{g})\,\dd{(|\DIFF f|\mres S_f)}\\
=\,&\frac{1}{\sqrt{\pi}}\frac{\omega_{n-1}}{\omega_n} \int_{S_g} (g^\vee-g^\wedge)(f^\vee-f^\wedge)(\nu_f\cdot\nu_{g})\,\dd{(\HH^h\mres S_f)},
\end{split}\end{equation}
where we used \cite[Proposition 3.34]{BG22}. Combining \eqref{eq:proof_main3_1}, \eqref{eq:proof_main3_2}, and
\eqref{eq:proof_main3_3}, we finally obtain \eqref{eq:claim_main3}. Consequently, the proof of Theorem \ref{thm:main3} is complete.
\end{proof}
\small


\begin{thebibliography}{MPPP07b}

\bibitem[ABP22]{ABP22}
Gioacchino Antonelli, Camillo Brena, and Enrico Pasqualetto.
\newblock The {R}ank-{O}ne {T}heorem on {$\mathrm{RCD}(K,N)$} spaces.
\newblock Preprint, arXiv:2204.04921, 2022.

\bibitem[ABS19]{ambrosio2018rigidity}
Luigi Ambrosio, Elia Bru{\`e}, and Daniele Semola.
\newblock Rigidity of the 1-{B}akry-{{\'E}}mery inequality and sets of finite
  perimeter in {RCD} spaces.
\newblock {\em Geom. Funct. Anal.}, 29:949--1001, 2019.

\bibitem[ADM14]{ADM2014}
Luigi Ambrosio and Simone Di~Marino.
\newblock Equivalent definitions of {BV} space and of total variation on metric
  measure spaces.
\newblock {\em Journal of Functional Analysis}, 266(7):4150--4188, 2014.

\bibitem[AG13]{AmbrosioGigli11}
Luigi Ambrosio and Nicola Gigli.
\newblock A user's guide to optimal transport.
\newblock In {\em Modelling and Optimisation of Flows on Networks}, Lecture
  Notes in Mathematics, pages 1--155. Springer Berlin Heidelberg, 2013.

\bibitem[AGMR12]{AmbrosioGigliMondinoRajala12}
Luigi Ambrosio, Nicola Gigli, Andrea Mondino, and Tapio Rajala.
\newblock Riemannian {R}icci curvature lower bounds in metric measure spaces
  with $\sigma$-finite measure.
\newblock {\em Trans. Amer. Math. Soc.}, 367(7):4661--4701, 2012.

\bibitem[AGS13]{AmbrosioGigliSAvare11-3}
Luigi Ambrosio, Nicola Gigli, and Giuseppe Savar{\'e}.
\newblock Density of {L}ipschitz functions and equivalence of weak gradients in
  metric measure spaces.
\newblock {\em Rev. Mat. Iberoam.}, 29(3):969--996, 2013.

\bibitem[AGS14a]{AmbrosioGigliSavare11}
Luigi Ambrosio, Nicola Gigli, and Giuseppe Savar{\'e}.
\newblock Calculus and heat flow in metric measure spaces and applications to
  spaces with {R}icci bounds from below.
\newblock {\em Invent. Math.}, 195(2):289--391, 2014.

\bibitem[AGS14b]{Ambrosio_2014}
Luigi Ambrosio, Nicola Gigli, and Giuseppe Savar{\'e}.
\newblock Metric measure spaces with {R}iemannian {R}icci curvature bounded
  from below.
\newblock {\em Duke Mathematical Journal}, 163(7):1405--1490, 2014.

\bibitem[Amb01]{AmbrosioAhlfors}
Luigi Ambrosio.
\newblock Some fine properties of sets of finite perimeter in {A}hlfors regular
  metric measure spaces.
\newblock {\em Adv. Math.}, 159(1):51--67, 2001.

\bibitem[Amb02]{amb01}
Luigi Ambrosio.
\newblock Fine properties of sets of finite perimeter in doubling metric
  measure spaces.
\newblock {\em Set Valued Analysis}, 10:111--128, 2002.

\bibitem[Amb18]{AmbICM}
Luigi Ambrosio.
\newblock Calculus, heat flow and curvature-dimension bounds in metric measure
  spaces.
\newblock In {\em Proceedings of the {I}nternational {C}ongress of
  {M}athematicians---{R}io de {J}aneiro 2018. {V}ol. {I}. {P}lenary lectures},
  pages 301--340. World Sci. Publ., Hackensack, NJ, 2018.

\bibitem[AMP04]{ambmirpal04}
Luigi Ambrosio, Michele {Miranda Jr.}, and Diego Pallara.
\newblock Special {F}unctions of {B}ounded {V}ariation in {D}oubling {M}etric
  {M}easure {S}paces.
\newblock {\em Quad. Mat.}, 14:1--45, 2004.

\bibitem[BBM01]{BBM}
Jean Bourgain, Ha\"{\i}m Brezis, and Petru Mironescu.
\newblock Another look at {S}obolev spaces.
\newblock In {\em Optimal control and partial differential equations}, pages
  439--455. IOS, Amsterdam, 2001.

\bibitem[BG22a]{BG22}
Camillo Brena and Nicola Gigli.
\newblock Calculus and fine properties of functions of bounded variation on
  {$\mathrm{RCD}(K,N)$} spaces.
\newblock Preprint, arXiv:2204.04174, 2022.

\bibitem[BG22b]{BGlvm}
Camillo Brena and Nicola Gigli.
\newblock Local vector measures.
\newblock Preprint, arXiv:2206.14864, 2022.

\bibitem[BMP12]{BMP}
Marco Bramanti, Michele Miranda~Jr., and Diego Pallara.
\newblock Two characterizations of {BV} functions on {C}arnot groups via the
  heat semigroup.
\newblock {\em Int. Math. Res. Notices}, 17:3845--3876, 2012.

\bibitem[BN16a]{BNgu}
Ha\"{\i}m Brezis and Hoai-Minh Nguyen.
\newblock The {BBM} formula revisited.
\newblock {\em Atti Accad. Naz. Lincei Rend. Lincei Mat. Appl.},
  27(4):515--533, 2016.

\bibitem[BN16b]{BNgu2}
Ha\"{\i}m Brezis and Hoai-Minh Nguyen.
\newblock Two subtle convex nonlocal approximations of the {BV}-norm.
\newblock {\em Nonlinear Anal.}, 137:222--245, 2016.

\bibitem[BN17]{BNgu3}
Ha\"{\i}m Brezis and Hoai-Minh Nguyen.
\newblock Non-convex, non-local functionals converging to the total variation.
\newblock {\em C. R. Math. Acad. Sci. Paris}, 355(1):24--27, 2017.

\bibitem[BN18]{BNgu4}
Ha\"{\i}m Brezis and Hoai-Minh Nguyen.
\newblock Non-local functionals related to the total variation and connections
  with image processing.
\newblock {\em Ann. PDE}, 4(1):Paper No. 9, 77, 2018.

\bibitem[BPS22a]{bru2021constancy}
Elia Bru{\`e}, Enrico Pasqualetto, and Daniele Semola.
\newblock Constancy of the dimension in codimension one and locality of the
  unit normal on {$\mathrm{RCD}(K,N)$} spaces.
\newblock {\em Ann. Scuola Norm.-Sci.}, 2022.
\newblock https://doi.org/10.2422/2036-2145.202110\_007.

\bibitem[BPS22b]{bru2019rectifiability}
Elia Bru{\`e}, Enrico Pasqualetto, and Daniele Semola.
\newblock {R}ectifiability of the reduced boundary for sets of finite perimeter
  over ${{\mathrm{RCD}}(K,N)}$ spaces.
\newblock {\em J. Eur. Math. Soc.}, 2022.
\newblock Electronically published on February 15, 2022. doi:
  10.4171/JEMS/1217.

\bibitem[Bre02]{Bre2}
Ha\"{\i}m Brezis.
\newblock How to recognize constant functions. {A} connection with {S}obolev
  spaces.
\newblock {\em Uspekhi Mat. Nauk}, 57(4(346)):59--74, 2002.

\bibitem[BS20]{bru2018constancy}
Elia Bru{\`e} and Daniele Semola.
\newblock Constancy of the dimension for {${\rm RCD}(K,N)$} spaces via
  regularity of {L}agrangian flows.
\newblock {\em Comm. Pure Appl. Math.}, 73(6):1141--1204, 2020.

\bibitem[BSVSY22]{BVY3}
Ha\"{\i}m Brezis, Andreas Seeger, Jean Van~Schaftingen, and Po-Lam Yung.
\newblock Sobolev spaces revisited.
\newblock Preprint, arXiv:2202.01410, 2022.

\bibitem[BVSY21a]{BVY2}
Ha\"{\i}m Brezis, Jean Van~Schaftingen, and Po-Lam Yung.
\newblock Going to {L}orentz when fractional {S}obolev, {G}agliardo and
  {N}irenberg estimates fail.
\newblock {\em Calc. Var. Partial Differential Equations}, 60(4):Paper No. 129,
  12, 2021.

\bibitem[BVSY21b]{BVY}
Ha\"{\i}m Brezis, Jean Van~Schaftingen, and Po-Lam Yung.
\newblock A surprising formula for {S}obolev norms.
\newblock {\em Proc. Natl. Acad. Sci. USA}, 118(8):Paper No. e2025254118, 6,
  2021.

\bibitem[CC97]{Cheeger-Colding97I}
Jeff Cheeger and Tobias~H. Colding.
\newblock On the structure of spaces with {R}icci curvature bounded below. {I}.
\newblock {\em J. Differential Geom.}, 46(3):406--480, 1997.

\bibitem[CC00a]{Cheeger-Colding97II}
Jeff Cheeger and Tobias~H. Colding.
\newblock On the structure of spaces with {R}icci curvature bounded below.
  {II}.
\newblock {\em J. Differential Geom.}, 54(1):13--35, 2000.

\bibitem[CC00b]{Cheeger-Colding97III}
Jeff Cheeger and Tobias~H. Colding.
\newblock On the structure of spaces with {R}icci curvature bounded below.
  {III}.
\newblock {\em J. Differential Geom.}, 54(1):37--74, 2000.

\bibitem[Che99]{Cheeger00}
Jeff Cheeger.
\newblock Differentiability of {L}ipschitz functions on metric measure spaces.
\newblock {\em Geom. Funct. Anal.}, 9(3):428--517, 1999.

\bibitem[CM07]{carbonaro_mauceri_2007}
Andrea Carbonaro and Giancarlo Mauceri.
\newblock A note on bounded variation and heat semigroup on {R}iemannian
  manifolds.
\newblock {\em Bulletin of the Australian Mathematical Society},
  76(1):155–160, 2007.

\bibitem[CS19]{CSte}
Giovanni~E. Comi and Giorgio Stefani.
\newblock A distributional approach to fractional {S}obolev spaces and
  fractional variation: existence of blow-up.
\newblock {\em J. Funct. Anal.}, 277(10):3373--3435, 2019.

\bibitem[Dav02]{Dav}
Javier D\'{a}vila.
\newblock On an open question about functions of bounded variation.
\newblock {\em Calc. Var. Partial Differential Equations}, 15(4):519--527,
  2002.

\bibitem[DGP21]{debin2019quasicontinuous}
Cl{\'e}ment Debin, Nicola Gigli, and Enrico Pasqualetto.
\newblock Quasi-{C}ontinuous {V}ector {F}ields on {RCD} spaces.
\newblock {\em Potential Analysis}, 54:183--211, 2021.

\bibitem[DMS19]{DiSqua}
Simone Di~Marino and Marco Squassina.
\newblock New characterizations of {S}obolev metric spaces.
\newblock {\em J. Funct. Anal.}, 276(6):1853--1874, 2019.

\bibitem[DPMR17]{DPMR16}
Guido De~Philippis, Andrea Marchese, and Filip Rindler.
\newblock On a conjecture of {C}heeger.
\newblock In {\em Measure theory in non-smooth spaces}, Partial Differ. Equ.
  Meas. Theory, pages 145--155. De Gruyter Open, Warsaw, 2017.

\bibitem[Gor22]{Gor}
Wojciech G\'{o}rny.
\newblock Bourgain-{B}rezis-{M}ironescu approach in metric spaces with
  {E}uclidean tangents.
\newblock {\em J. Geom. Anal.}, 32(4):Paper No. 128, 22, 2022.

\bibitem[GH14]{GigliHan14}
Nicola Gigli and Bangxian Han.
\newblock Independence on $p$ of weak upper gradients on {RCD} spaces.
\newblock {\em Journal of Functional Analysis}, 271, 2014.

\bibitem[Gig15]{Gigli12}
Nicola Gigli.
\newblock On the differential structure of metric measure spaces and
  applications.
\newblock {\em Mem. Amer. Math. Soc.}, 236(1113):vi+91, 2015.

\bibitem[Gig18]{Gigli14}
Nicola Gigli.
\newblock Nonsmooth differential geometry - an approach tailored for spaces
  with {R}icci curvature bounded from below.
\newblock {\em Mem. Amer. Math. Soc.}, 251(1196):v+161, 2018.

\bibitem[GLP22]{GLP22}
Nicola Gigli, Danka Lu\v{c}i\'{c}, and Enrico Pasqualetto.
\newblock Duals and pullbacks of normed modules.
\newblock Preprint, arXiv:2207.04972, 2022.

\bibitem[GMR15]{Gigli_2015}
Nicola Gigli, Andrea Mondino, and Tapio Rajala.
\newblock Euclidean spaces as weak tangents of infinitesimally {H}ilbertian
  metric measure spaces with {R}icci curvature bounded below.
\newblock {\em Journal f{\"u}r die reine und angewandte Mathematik [Crelle's
  Journal]}, 2015(705), 2015.

\bibitem[GMS15]{GMS15}
Nicola Gigli, Andrea Mondino, and Giuseppe Savar\'e.
\newblock Convergence of pointed non-compact metric measure spaces and
  stability of {R}icci curvature bounds and heat flows.
\newblock {\em Proc. Lond. Math. Soc. (3)}, 111(5):1071--1129, 2015.

\bibitem[GP20]{GP19}
Nicola Gigli and Enrico Pasqualetto.
\newblock {\em Lectures on {N}onsmooth {D}ifferential {G}eometry}.
\newblock Springer International Publishing, 2020.

\bibitem[GP21]{GP16-2}
Nicola Gigli and Enrico Pasqualetto.
\newblock Behaviour of the reference measure on {${\rm RCD}$} spaces under
  charts.
\newblock {\em Communications in Analysis and Geometry}, 29:1391--1414, 2021.

\bibitem[GT21]{GT21}
Nicola Gigli and Alexander Tyulenev.
\newblock Korevaar-{S}choen's energy on strongly rectifiable spaces.
\newblock {\em Calculus of Variations and Partial Differential Equations},
  60(6):1--54, 2021.

\bibitem[GT22a]{GarTral2}
Nicola Garofalo and Giulio Tralli.
\newblock A {B}ourgain-{B}rezis-{M}ironescu-{D}avila theorem in {C}arnot groups
  of step two, 2022.
\newblock To appear in Communications in Analysis and Geometry.

\bibitem[GT22b]{GarTral1}
Nicola Garofalo and Giulio Tralli.
\newblock A new integral decoupling property of sub-{R}iemannian heat kernels
  and some notable consequences, 2022.
\newblock Preprint, 2205.04574.

\bibitem[Haj96]{Hajasz:1996aa}
Piotr Haj{\l}asz.
\newblock Sobolev spaces on an arbitrary metric space.
\newblock {\em Potential Analysis}, 5(4):403--415, 1996.

\bibitem[HKST15]{HKST15}
Juha Heinonen, Pekka Koskela, Nageswari Shanmugalingam, and Jeremy~T. Tyson.
\newblock {\em Sobolev spaces on metric measure spaces. An approach based on
  upper gradients}, volume~27 of {\em New Mathematical Monographs}.
\newblock Cambridge University Press, Cambridge, 2015.

\bibitem[HP21]{HanPin}
Bang-Xian Han and Andrea Pinamonti.
\newblock On the asymptotic behaviour of the fractional {S}obolev seminorms in
  metric measure spaces: {B}ourgain-{B}rezis-{M}ironescu's theorem revisited.
\newblock Preprint, 2110.05980, 2021.

\bibitem[JLZ14]{jiang2014heat}
Renjin Jiang, Huaiqian Li, and Huichun Zhang.
\newblock Heat {K}ernel {B}ounds on {M}etric {M}easure {S}paces and {S}ome
  {A}pplications.
\newblock {\em Potential Analysis}, 44, 2014.

\bibitem[KM18]{KelMon16}
Martin Kell and Andrea Mondino.
\newblock On the volume measure of non-smooth spaces with {R}icci curvature
  bounded below.
\newblock {\em Annali della Scuola normale superiore di Pisa, Classe di
  scienze}, XVII:593--610, 2018.

\bibitem[LPZ22]{LaPiZh22}
Panu Lahti, Andrea Pinamonti, and Xiaodan Zhou.
\newblock A characterization of {BV} and {S}obolev functions via nonlocal
  functionals in metric spaces.
\newblock Preprint, arXiv:2207.02488, 2022.

\bibitem[LS11]{LS2}
Giovanni Leoni and Daniel Spector.
\newblock Characterization of {S}obolev and {$BV$} spaces.
\newblock {\em J. Funct. Anal.}, 261(10):2926--2958, 2011.

\bibitem[LS14]{LS1}
Giovanni Leoni and Daniel Spector.
\newblock Corrigendum to ``{C}haracterization of {S}obolev and {$BV$} spaces''
  [{J}. {F}unct. {A}nal. 261 (10) (2011) 2926--2958].
\newblock {\em J. Funct. Anal.}, 266(2):1106--1114, 2014.

\bibitem[LV09]{Lott-Villani09}
John Lott and C{\'e}dric Villani.
\newblock Ricci curvature for metric-measure spaces via optimal transport.
\newblock {\em Ann. of Math. (2)}, 169(3):903--991, 2009.


\bibitem[MP19]{MalPin}
Ali Maalaoui and Andrea Pinamonti.
\newblock Interpolations and fractional {S}obolev spaces in {C}arnot groups.
\newblock {\em Nonlinear Anal.},  179 91--104, 2019.

\bibitem[Mir03]{MIRANDA2003}
Michele Miranda~Jr.
\newblock Functions of bounded variation on ``good'' metric spaces.
\newblock {\em Journal de Math{\'e}matiques Pures et Appliqu{\'e}es},
  82(8):975--1004, 2003.

\bibitem[MPPP07a]{MPPPbis}
Michele Miranda~Jr., Diego Pallara, Fabio Paronetto, and Marc Preunkert.
\newblock Heat semigroup and functions of bounded variation on {R}iemannian
  manifolds.
\newblock {\em J. Reine Ang. Math.,}, 2007(613):99--119, 2007.

\bibitem[MPPP07b]{MPPP}
Michele Miranda~Jr., Diego Pallara, Fabio Paronetto, and Marc Preunkert.
\newblock Short-time heat flow and functions of bounded variation in
  $\mathbf{R}^n$.
\newblock {\em Annales de la Facult\'e des sciences de Toulouse:
  Math\'ematiques}, Ser. 6, 16(1):125--145, 2007.

\bibitem[MMS16]{MMS16}
Niko Marola, Michele Miranda~Jr., and Nageswari Shanmugalingam.
\newblock Characterizations of sets of finite perimeter using heat kernels in
  metric spaces.
\newblock {\em Potential Analysis}, 45(4):609--633, 2016.

\bibitem[MN19]{Mondino-Naber14}
Andrea Mondino and Aaron Naber.
\newblock Structure theory of metric measure spaces with lower {R}icci
  curvature bounds.
\newblock {\em Journal of the European Mathematical Society}, 21(6):1809--1854,
  2019.

\bibitem[Ngu11]{Ngu}
Hoai-Minh Nguyen.
\newblock {$\Gamma$}-convergence, {S}obolev norms, and {BV} functions.
\newblock {\em Duke Math. J.}, 157(3):495--533, 2011.

\bibitem[NPSV20]{NPVS}
Hoai-Minh Nguyen, Andrea Pinamonti, Marco Squassina, and Eugenio Vecchi.
\newblock Some characterizations of magnetic {S}obolev spaces.
\newblock {\em Complex Var. Elliptic Equ.}, 65(7):1104--1114, 2020.

\bibitem[Pet11]{Petrunin11}
Anton Petrunin.
\newblock Alexandrov meets {L}ott-{V}illani-{S}turm.
\newblock {\em M\"unster J. Math.}, 4:53--64, 2011.

\bibitem[Pon04]{Pon}
Augusto~C. Ponce.
\newblock A new approach to {S}obolev spaces and connections to
  {$\Gamma$}-convergence.
\newblock {\em Calc. Var. Partial Differential Equations}, 19(3):229--255,
  2004.

\bibitem[PSV19]{PVS1}
Andrea Pinamonti, Marco Squassina, and Eugenio Vecchi.
\newblock Magnetic {BV}-functions and the {B}ourgain-{B}rezis-{M}ironescu
  formula.
\newblock {\em Adv. Calc. Var.}, 12(3):225--252, 2019.

\bibitem[Raj12]{Rajala12}
Tapio Rajala.
\newblock Local {P}oincar\'e inequalities from stable curvature conditions on
  metric spaces.
\newblock {\em Calc. Var. Partial Differential Equations}, 44(3-4):477--494,
  2012.

\bibitem[Sav14]{Savare13}
Giuseppe Savar{\'e}.
\newblock Self-improvement of the {B}akry-\'{E}mery condition and {W}asserstein
  contraction of the heat flow in {${\rm RCD}(K,\infty)$} metric measure
  spaces.
\newblock {\em Discrete Contin. Dyn. Syst.}, 34(4):1641--1661, 2014.

\bibitem[Sha00]{Shanmugalingam00}
Nageswari Shanmugalingam.
\newblock Newtonian spaces: an extension of {S}obolev spaces to metric measure
  spaces.
\newblock {\em Rev. Mat. Iberoamericana}, 16(2):243--279, 2000.

\bibitem[Stu95]{Sturm96II}
Karl-Theodor Sturm.
\newblock Analysis on local {D}irichlet spaces. {II}. {U}pper {G}aussian
  estimates for the fundamental solutions of parabolic equations.
\newblock {\em Osaka J. Math.}, 32(2):275--312, 1995.

\bibitem[Stu96]{Sturm96III}
Karl-Theodor Sturm.
\newblock Analysis on local {D}irichlet spaces. {III}. {T}he parabolic
  {H}arnack inequality.
\newblock {\em J. Math. Pures Appl. (9)}, 75(3):273--297, 1996.

\bibitem[Stu06a]{Sturm06I}
Karl-Theodor Sturm.
\newblock On the geometry of metric measure spaces. {I}.
\newblock {\em Acta Math.}, 196(1):65--131, 2006.

\bibitem[Stu06b]{Sturm06II}
Karl-Theodor Sturm.
\newblock On the geometry of metric measure spaces. {II}.
\newblock {\em Acta Math.}, 196(1):133--177, 2006.

\end{thebibliography}
\end{document}